\declaretheorem[style=shaded,within=section]{definition}
\declaretheorem[style=shaded,sibling=definition]{theorem}
\declaretheorem[style=shaded,sibling=definition]{assumption}
\declaretheorem[style=shaded,sibling=definition]{corollary}
\declaretheorem[sibling=definition]{remark}
\declaretheorem[sibling=definition]{example}
\declaretheorem[style=shaded,sibling=definition]{lemma}
\DeclareMathOperator*{\argmin}{argmin}
\newcommand{\ignore}[1]{}
\newcommand{\R}{\mathbb{R}}
\newcommand{\proxop}{\mathop{\mathrm{prox}}\nolimits}
\newcommand{\prox}{\proxop_{\alpha R}}
\newcommand{\eqdef}{\overset{\text{def}}{=}} 
\newcommand{\diag}{{\rm Diag}} 
\newcommand{\trace}{{\rm Trace}} 
\newcommand{\TD}{{\rm \eta}}
\newcommand{\cD}{{\cal D}}
\newcommand{\cN}{{\cal N}}
\newcommand{\cO}{{\cal O}}
\newcommand{\mA}{{\bf A}}
\newcommand{\mB}{{\bf B}}
\newcommand{\mC}{{\bf C}}
\newcommand{\mD}{{\bf D}}
\newcommand{\mG}{{\bf G}}
\newcommand{\mH}{{\bf H}}
\newcommand{\mI}{{\bf I}}
\newcommand{\mM}{{\bf M}}
\newcommand{\mmM}{{\bf Q}}
\newcommand{\mP}{{\bf P}}
\newcommand{\mPdiag}{{\bf \hat{P}}}
\newcommand{\mQ}{{\bf Q}}
\newcommand{\mS}{\mathbf{S}}
\newcommand{\mU}{{\bf U}}
\newcommand{\mZ}{{\bf Z}}
\newcommand{\mVdiag}{{\bf \hat{V}}}
\newcommand{\Probmat}{{\mP}}
\newcommand{\Lgen}{{\Phi}}
\newcommand{\Lnacc}{{\Psi}}
\newcommand{\Lacc}{{\Upsilon}}
\newcommand{\E}[1]{\mathbb{E}\left[#1\right] } 
\newcommand{\ED}[1]{\mathbb{E}_{\cD}\left[#1\right] } 
\newcommand{\Prob}[1]{\mathbb{P}\left(#1\right) }
\providecommand{\Range}[1]{\mbox{Range}\left( #1\right)}
\title{SEGA: Variance Reduction via Gradient Sketching\footnote{Accepted to NIPS 2018.}}
\author{Filip Hanzely\thanks{King Abdullah University of Science and Technology, Kingdom of Saudi Arabia}   \qquad  Konstantin Mishchenko\thanks{King Abdullah University of Science and Technology, Kingdom of Saudi Arabia}  \qquad Peter Richt\'{a}rik\thanks{King Abdullah University of Science and Technology, Kingdom of Saudi Arabia --- School of Mathematics, University of Edinburgh, United Kingdom ---  Moscow Institute of Physics and Technology, Russia}}
\begin{document}
\maketitle

\begin{abstract} 
We propose a randomized first order optimization method---\texttt{SEGA} (SkEtched GrAdient)---which progressively throughout its iterations builds a variance-reduced estimate of the gradient from random linear measurements (sketches) of the gradient obtained from an oracle. In each iteration, \texttt{SEGA} updates the current estimate of the gradient through a sketch-and-project operation using the information provided by the latest sketch, and this is subsequently used to compute an unbiased estimate of the true gradient through a random relaxation procedure. This unbiased estimate is then used to perform a gradient step. Unlike standard subspace descent methods, such as coordinate descent, \texttt{SEGA} can be used for optimization problems with a {\em non-separable} proximal term. We provide a general convergence analysis and prove linear convergence for strongly convex objectives. In the special case of coordinate sketches, \texttt{SEGA} can be enhanced with various techniques such as {\em importance sampling}, {\em minibatching} and {\em acceleration}, and its rate is up to a small constant factor identical to the best-known rate of coordinate descent. 
\end{abstract}

\newpage
{
%\footnotesize
\tableofcontents
}

\newpage
\section{Introduction}

Consider the  optimization problem
\begin{equation}\label{eq:main} \min_{x\in \R^n} F(x)\eqdef f(x) + R(x),\end{equation}
where $f:\R^n\to \R$ is smooth and $\mu$--strongly convex, and $R:\R^n\to \R\cup \{+\infty\}$ is a closed convex regularizer. In some applications, $R$ is either the indicator function of a convex set or a sparsity-inducing non-smooth penalty such as group $\ell_1$-norm. We assume that, as in these two examples, the \emph{proximal operator} of $R$, defined as
\begin{eqnarray*}
    \prox(x) \eqdef \argmin_{y\in \R^n} \left\{R(y) + \frac{1}{2\alpha}\|y - x\|^2_\mB  \right\},
\end{eqnarray*}
 is easily computable (e.g., in closed form). Above we use the weighted Euclidean norm $\| x\|_\mB \eqdef \langle x,x\rangle_\mB ^{1/2}$, where $\langle x,y\rangle_\mB  \eqdef \langle \mB x, y\rangle$ is a weighted inner product associated with a positive definite weight matrix $\mB$. Strong convexity of $f$ is defined  with respect to the geometry induced by this inner product and norm\footnote{$f$ is $\mu$--strongly convex if $f(x)\geq f(y)+\langle \nabla f(y),x-y\rangle_\mB +\tfrac{\mu}{2}\|x-y\|_\mB^2$ for all $x,y\in \R^n$.}.
 
 \subsection{Gradient sketching}
 
 In this paper we design proximal gradient-type methods for solving \eqref{eq:main} without assuming that the true gradient  of $f$ is available.  Instead, we assume that an oracle provides a {\em random linear transformation (i.e., a sketch) of the gradient}, which is the information available to drive the iterative process. 
 % Our family of methods is called \texttt{SEGA}, meaning ``SkEtched GrAdient''. 
 In particular, given a fixed distribution $\cD$  over matrices $\mS \in \R^{n\times b}$ ($b\geq 1$ can but does not need to be fixed), and  a query point $x\in \R^n$, our oracle provides us the random linear transformation  of the gradient given by
\begin{equation}
\label{eq:sketched_grad}
\zeta(\mS, x) \eqdef \mS^\top \nabla f(x) \in \R^{b}, \qquad \mS \sim \cD.
\end{equation}

Information of this type is available/used in a variety of scenarios. For instance, randomized coordinate descent (\texttt{CD}) methods use oracle~\eqref{eq:sketched_grad}  with $\cD$ corresponding to a distribution over standard  basis vectors. Minibatch/parallel variants of \texttt{CD} methods utilize  oracle \eqref{eq:sketched_grad} with $\cD$ corresponding to a distribution over random column submatrices of the identity matrix. If one is prepared to use difference of function values to approximate  directional derivatives, then one can apply our oracle model  to   zeroth-order optimization~\cite{conn2009introduction}.  Indeed, the directional derivative of $f$ in a random direction $\mS=s \in \R^{n\times 1}$ can be approximated by $\zeta(s, x)  \approx \tfrac{1}{\epsilon}(f(x+ \epsilon s) - f(x))$, where $\epsilon>0$ is sufficiently small. 

We now illustrate this concept using two examples.

\begin{example}[Sketches] 
 {\em (i) Coordinate sketch.} Let $\cD$ be the uniform distribution over standard unit basis vectors $e_1,e_2,\dots,e_n$ of $\R^n$. Then
$\zeta(e_i,x) = e_i^\top \nabla f(x)$, i.e., the $i^{\text{th}}$ {\em partial derivative} of $f$ at $x$. {\em (ii)  Gaussian sketch.} Let $\cD$ be the standard Gaussian distribution in $\R^n$. Then for $s\sim \cD$ we have
$\zeta(s, x) = s^\top \nabla f(x) $, i.e., the {\em directional derivative} of $f$ at $x$ in direction $s$.
\end{example}

\subsection{Related work}

In the last decade, stochastic gradient-type methods for solving problem~\eqref{eq:main} have received unprecedented attention by theoreticians and practitioners alike. Specific examples of such methods  are stochastic gradient descent (\texttt{SGD})~\cite{RobbinsMonro:1951}, variance-reduced variants of \texttt{SGD} such as \texttt{SAG}~\cite{SAG}, \texttt{SAGA}~\cite{SAGA}, \texttt{SVRG}~\cite{SVRG}, and their accelerated counterparts~\cite{lin2015universal, allen2017katyusha}. While these methods are specifically designed for objectives formulated as an expectation or a finite sum, we do not assume such a structure. Moreover, these methods  utilize a fundamentally different stochastic gradient information: they have access to an unbiased estimator of the gradient. In contrast, we do not assume that \eqref{eq:sketched_grad} is an unbiased estimator of $\nabla f(x)$. In fact,  $\zeta(\mS, x)\in \R^b$ and $\nabla f(x)\in \R^n$ do not even necessarily  belong to the same space.  Therefore, our algorithms and results should be seen as complementary to the above line of research.

While the gradient sketch $\zeta(\mS, x)$  does not immediatey lead to an unbiased estimator of the gradient,  \texttt{SEGA} uses  the information provided in the sketch  to {\em construct} an unbiased estimator of the gradient  via a {\em sketch-and-project} process. Sketch-and-project iterations were introduced in \cite{SIMAX2015} in the contex of linear feasibility problems. A dual view  uncovering a direct relationship with  stochastic subspace ascent methods was developed in \cite{SDA}. The latest and most in-depth treatment of sketch-and-project for linear feasibility is based on the idea of stochastic reformulations  \cite{ASDA}. Sketch-and-project can be combined with Polyak~\cite{SMOMENTUM, SHB-NIPS} and Nesterov momentum~\cite{ASBFGS}, extended to convex~ feasibility problems \cite{SPM}, matrix inversion \cite{inverse, PSEUDOINVERSE, ASBFGS}, and empirical risk minimization \cite{SBFGS, gower2018stochastic}. Connections to gossip algorithms for average consensus  were made in \cite{NEW-PERSPECTIVE, agossip}.

The line of work most closely related to our setup is that on randomized coordinate/subspace descent methods~\cite{Nesterov:2010RCDM, SDA}. Indeed, the information available to these methods is compatible with our oracle for specific distributions $\cD$. However, the main disadvantage of these methods is that they are not able to handle non-separable regularizers $R$. In contrast, the algorithm we propose---\texttt{SEGA}---works for any regularizer $R$.    In particular,  \texttt{SEGA} can handle non-separable constraints even with coordinate sketches, which is out of range of current coordinate descent methods. Hence, our work could be understood as extending the reach of coordinate and subspace descent methods from separable to arbitrary regularizers, which allows for a plethora of new applications.  Our method is able to work with an arbitrary regularizer due to its ability to {\em build an unbiased variance-reduced estimate of the gradient} of $f$ throughout the iterative process from the random linear measurements thereof provided by the oracle.  Moreover, and unlike coordinate descent,  \texttt{SEGA} allows for general sketches from essentially any distribution $\cD$. 

Another stream of work on designing gradient-type methods without assuming perfect access to the gradient is represented by the {\em inexact gradient descent} methods~\cite{d2008smooth,Devolder:2011inexact,schmidt2011convergence}. However, these methods deal with deterministic estimates of the gradient and are not based on linear transformations of the gradient. Therefore, this second line of research is also significantly different from what we do here. 

\subsection{Outline}

We describe \texttt{SEGA} in Section~\ref{sec:SEGA}. Convergence results for general sketches are described in Section~\ref{sec:analysis}. Refined results for coordinate sketches are presented in Section~\ref{sec:CD}, where we also describe and analyze an accelerated variant of \texttt{SEGA}. Experimental results can be found in Section~\ref{sec:experiments}. We also include here experiments with a {\em subspace} variant of \texttt{SEGA}, which is described and analyzed in Appendix~\ref{sec:subSEGA}. Conclusions are drawn and potential extensions outlined in Section~\ref{sec:conclusion}. A simplified analysis of \texttt{SEGA} in the case of coordinate sketches and for $R\equiv 0$ is developed in Appendix~\ref{sec:simple_SEGA} (under standard assumptions as in the main paper) and~\ref{sec:analysis-samenorm} (under alternative assumptions). Extra experiments for additional insights are included in Appendix~\ref{sec:extra_exp}.

\subsection{Notation}

We introduce notation when and where needed. For convenience, we provide a table of frequently used notation in Appendix~\ref{sec:notation}.

%We do not assume that $\ED{v_k}=\ED{\mS_k^\top \nabla f(x_k)} = \ED{\mS_k^\top }\nabla f(x_k)$.

% which work in scenarios where in each iteration only a random linear transformation of the gradient of $f$ is available, or where 

\section{The \texttt{SEGA} Algorithm} \label{sec:SEGA}

In this section we introduce a learning process for estimating the gradient from the sketched information provided by \eqref{eq:sketched_grad}; this will be used as a subroutine of \texttt{SEGA}.

Let $x^k$ be the current iterate, and let $h^k$ be the current estimate of the gradient of $f$. We then query the oracle, and receive new gradient information in the form of the sketched gradient \eqref{eq:sketched_grad}. At this point, we would like to update $h^k$ based on this new information. We do this using a {\em sketch-and-project} process~\cite{SIMAX2015, SDA, ASDA}: we set $h^{k+1}$ to be the closest vector to $h^k$ (in a certain Euclidean norm) satisfying~\eqref{eq:sketched_grad}:
\begin{eqnarray}
h^{k+1} &=& \arg \min_{h\in \R^{n}} \| h -  h^k\|_{\mB}^2 \notag \\
&& \text{subject to} \quad \mS_{k}^\top h =  \mS_{k}^\top \nabla f(x^k). \label{eq:sketch-n-project}
\end{eqnarray}

The closed-form solution of \eqref{eq:sketch-n-project} is
\begin{equation} h^{k+1} = h^k - \mB^{-1} \mZ_{k} (h^k - \nabla f(x^k)) = (\mI-\mB^{-1}\mZ_{k} ) h^k + \mB^{-1}\mZ_{k} \nabla f(x^k),\label{eq:h^{k+1}}
\end{equation}
where $\mZ_{k} \eqdef  \mS_{k} \left(\mS_{k}^\top \mB^{-1} \mS_{k}\right)^\dagger\mS_{k}^\top$. Notice that $h^{k+1}$ is a \emph{biased} estimator of $\nabla f(x^k)$. In order to obtain an unbiased gradient estimator, we introduce a random variable\footnote{Such a random variable may not exist. Some sufficient conditions are provided later.} $\theta_k=\theta(\mS_{k})$ for which

\begin{equation} \label{eq:unbiased} \ED{\theta_k  \mZ_{k}} = \mB.
\end{equation}
If $\theta_k$ satisfies~\eqref{eq:unbiased}, it is straightforward to see that the random vector
\begin{equation} \label{eq:g^k} g^k \eqdef (1-\theta_k) h^k + \theta_k h^{k+1} \overset{\eqref{eq:h^{k+1}}}{=} h^k + \theta_k \mB^{-1}\mZ_{k} (\nabla f(x^k) - h^k)
\end{equation}
is an {\em unbiased estimator} of the gradient:
\begin{eqnarray}
\ED{g^k} &\overset{\eqref{eq:unbiased} +\eqref{eq:g^k}}{=}&  \nabla f(x^k). \label{eq:unbiased_estimator}
\end{eqnarray}

Finally, we use $g^k$ instead of the true gradient, and perform a proximal step with respect to $R$. This leads to a new randomized optimization method, which we call {\em SkEtched GrAdient Method (\texttt{SEGA})}. The method is formally described in Algorithm~\ref{alg:gs}. We stress again that the method does not need the access to the full gradient. 

\begin{figure}
\begin{minipage}{0.60\textwidth}
\centering
\begin{algorithm}[H]\label{alg:gs}
\caption{\texttt{SEGA}: SkEtched GrAdient Method}
    \SetKwInOut{Init}{Initialize}
        \SetKwInOut{Output}{Output}
    \nl\Init {$x^0, h^0\in \R^n$;  $\mB\succ 0$; distribution $\cD$; stepsize $\alpha>0$}
    \nl \For{$k=1,2,\dots$}
    {
    \nl{Sample $\mS_{k} \sim \cD$} \\
    \nl{$g^{k} = h^k + \theta_k  \mB^{-1}\mZ_{k} (\nabla f(x^k) - h^k)$} \label{eq:g_update}\\
 \nl{$x^{k+1} = \prox(x^k - \alpha g^k) $}  \label{eq:x_update} \\
   \nl{$h^{k+1} = h^k + \mB^{-1}\mZ_{k} (\nabla f(x^k) - h^k) $}\label{eq:h_update}\\
   }
%   \nl\Output{ $x^{k+1}$}
\end{algorithm}
\end{minipage}
\hskip 0.4cm
\begin{minipage}{0.40\textwidth}
\centering
\includegraphics[width = 0.9\textwidth ]{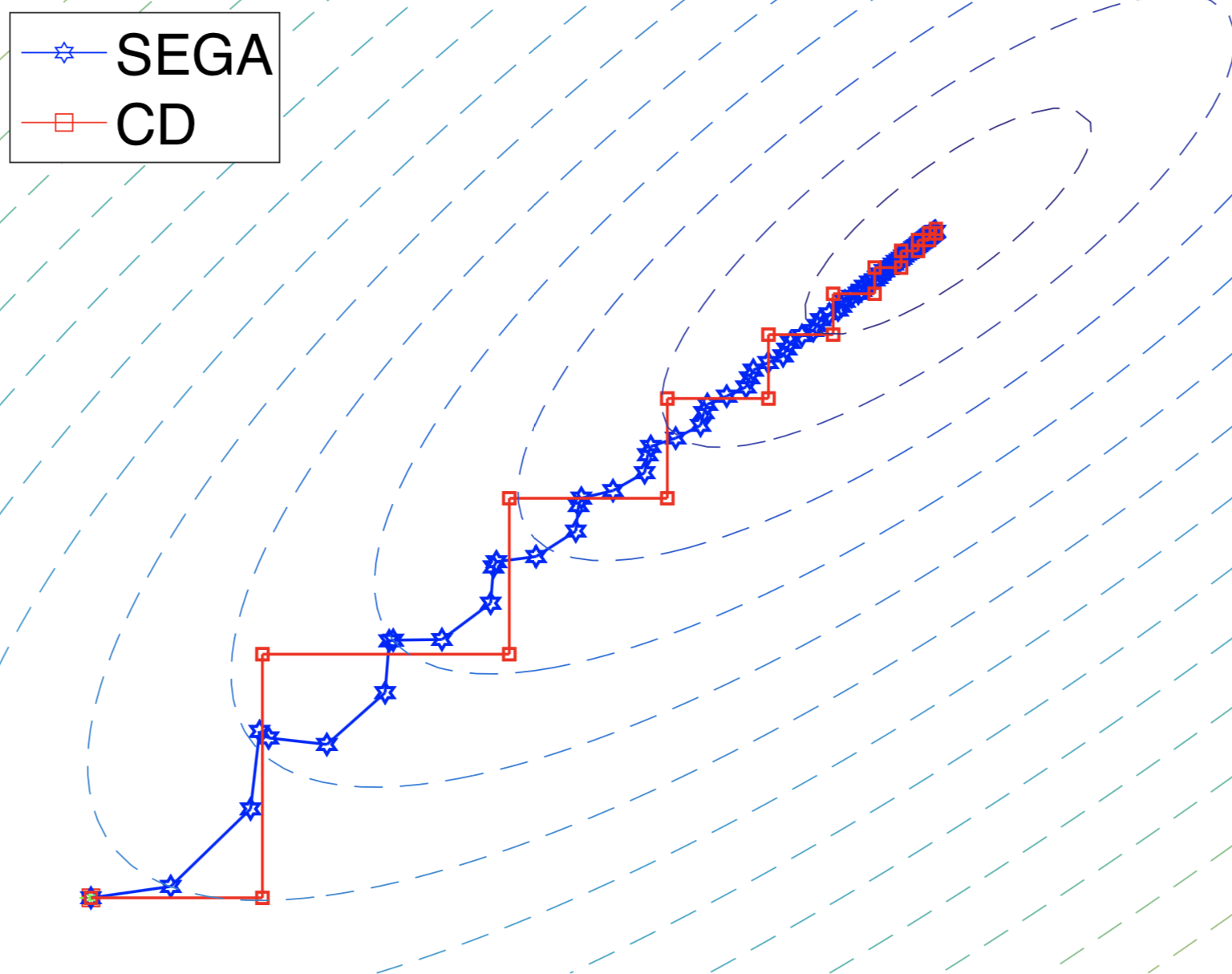}
\caption{Iterates of \texttt{SEGA} and \texttt{CD}}\label{fig:trajectory}
\end{minipage}
\end{figure}

\subsection{\texttt{SEGA} as a variance-reduced method}

As we shall show, both $h^k$ and $g^k$ are becoming better at approximating $\nabla f(x^k)$ as the iterates $x^k$ approach the optimum. Hence, the variance of $g^k$ as an estimator of the gradient tends to zero, which means that  \texttt{SEGA} is a \emph{variance-reduced} algorithm. The structure of \texttt{SEGA} is inspired by the \texttt{JackSketch} algorithm  introduced in~\cite{gower2018stochastic}. However,  as \texttt{JackSketch} is aimed at solving a finite-sum optimization problem with many components, it does not make much sense to apply it to \eqref{eq:main}. Indeed, when applied to \eqref{eq:main} (with $R=0$, since  \texttt{JackSketch} was analyzed for smooth  optimization only), \texttt{JackSketch}  reduces to gradient descent. While \texttt{JackSketch} performs {\em Jacobian} sketching (i.e., multiplying the Jacobian by a random matrix from the right, effectively sampling a subset of the gradients forming the finite sum), \texttt{SEGA} multiplies the Jacobian by a random matrix from the left. In doing so, \texttt{SEGA} becomes oblivious to the finite-sum structure and transforms into the gradient sketching mechanism described in~\eqref{eq:sketched_grad}.

% which is a fairly general variance reduced algorithm for empirical risk minimization that recovers \texttt{SAGA}~\cite{SAGA} as a special case. 

% In particular, \texttt{JacSketch} learns the gradient direction from the oracle providing a particular type of stochastic gradients, in contrast to \texttt{SEGA}, which learns the gradient direction from the sketched gradients~\eqref{eq:sketched_grad}. Therefore, \texttt{SEGA} can be seen as a \emph{variance reduced coordinate descent for composite optimization}. 

\subsection{\texttt{SEGA} versus coordinate descent}

We now illustrate the above general setup on the simple example when $\cD$ corresponds to a distribution over standard unit basis vectors in $\R^n$. 

\begin{example}\label{ex:coord_setup} Let $\mB=\diag(b_1,\dots,b_n)\succ 0$ and let $\cD$ be defined as follows. We choose $\mS_{k} = e_{i}$ with probability $p_i>0$, where  $e_1,e_2,\dots, e_n$ are the unit basis vectors in $\R^n$. Then
 \begin{equation} \label{eq:988fgf} h^{k+1} \overset{\eqref{eq:h^{k+1}}}{=} h^k + e_{i}^\top (\nabla f(x^k) - h^k) e_{i},\end{equation}
which can equivalently be written as $h^{k+1}_i = e_{i}^\top \nabla f(x^k)$ and $h^{k+1}_j = h^k_j$ for $j\neq i$. Note that $h^{k+1}$ does not depend on $\mB$. If we choose $\theta_k=\theta(\mS_{k}) = 1/p_i$, then 
\[\ED{\theta_k \mZ_{k}} = \sum_{i=1}^n p_i \frac{1}{p_i} e_i (e_i^\top \mB^{-1} e_i)^{-1} e_i^\top = \sum_{i=1}^n \frac{e_i e_i^\top}{1/b_i} = \mB\]
which means that $\theta_k$ is a bias-correcting random variable. We then get
\begin{equation} \label{eq:8h0h09ffs}g^k \overset{\eqref{eq:g^k} }{=}   h^k + \frac{1}{p_{i}}  e_{i}^\top (\nabla f(x^k) - h^k) e_{i} . \end{equation}
\end{example}

In the setup of Example~\ref{ex:coord_setup}, both \texttt{SEGA} and \texttt{CD}  obtain  new gradient information in the form of a random partial derivative of $f$. However, the two methods process this information differently, and perform a different update: 
\begin{itemize}
\item[(i)]  While \texttt{SEGA} allows for arbitrary proximal term, \texttt{CD} allows for separable proximal term only~\cite{ProxSDCA,lin2014accelerated,APPROX}.  
\item[(ii)] While \texttt{SEGA} updates all coordinates in every iteration,  \texttt{CD} updates  a single coordinate only. 
\item[(iii)] If we force $h^k=0$ in  \texttt{SEGA} and use coordinate sketches, the method transforms  into \texttt{CD}.
\end{itemize}

 Based on the above observations, we conclude that \texttt{SEGA} can be applied in more general settings for the price of potentially more expensive iterations\footnote{Forming vector $g$ and computing the prox.}.  For intuition-building illustration of how \texttt{SEGA} works, Figure~\ref{fig:trajectory} shows  the evolution of iterates of both \texttt{SEGA} and \texttt{CD} applied to minimizing a simple quadratic function in 2 dimensions. For more figures of this type, including the composite case where \texttt{CD} does not work, see Appendix~\ref{sec:evolution_extra}.

In Section~\ref{sec:CD} we show that \texttt{SEGA} enjoys the same theoretical iteration complexity rates as \texttt{CD}, up to a small constant factor. This remains true when comparing state-of-the-art variants of \texttt{CD} utilizing importance-sampling, parallelism/mini-batching and acceleration with the appropriate corresponding  variants of \texttt{SEGA}.

\begin{remark}
Nontrivial sketches $\mS$ and metric $\mB$ might, in some applications, bring a substantial speedup against the baseline choices mentioned in Example~\ref{ex:coord_setup}. Appendix~\ref{sec:subSEGA} provides one setting where this can happen: there are problems where the gradient of $f$ always lies in a particular $d$-dimensional subspace of $\R^n$. In such a case, suitable choice of $\mS$ and $\mB$ leads to $\cO\left(\tfrac{n}{d}\right)$--times faster convergence compared to the setup of Example~\ref{ex:coord_setup}. In Section~\ref{sec:exp_aggressive} we numerically demonstrate this claim. 
\end{remark}

\section{Convergence of \texttt{SEGA} for General Sketches \label{sec:analysis}}

In this section we state a linear convergence result for \texttt{SEGA} (Algorithm~\ref{alg:gs}) for general sketch distributions $\cD$ under  smoothness and strong convexity assumptions. 

\subsection{Smoothness assumptions}

We will use the following general version of smoothness. 
\begin{assumption}[$\mQ$-smoothness]  \label{ass:M_smooth_inv} Function $f$ is $\mQ$-smooth with respect to $\mB$, where $\mQ\succ 0$ and $\mB\succ 0$. That is,
for all $x,y$, the following inequality is satisfied:
    \begin{align}\label{eq:M_smooth_inv}
         f(x) - f(y) - \langle \nabla f(y),x - y \rangle_{\mB } \ge \frac{1}{2}\|\nabla f(x) - \nabla f(y)\|_{\mmM}^2,
    \end{align}
\end{assumption}
Assumption~\ref{ass:M_smooth_inv} is not standard in the literature. However, as Lemma~\ref{lem:relate} states, for $\mB=\mI$ and $\mQ=\mM^{-1}$, Assumption~\ref{ass:M_smooth_inv} is equivalent to $\mM$-smoothness (see Assumption~\ref{ass:M_smooth}), which is a common assumption in modern analysis of \texttt{CD} methods. Hence, our assumption is more general than the commonly used assumption.

\begin{assumption}[$\mM$-smoothness]\label{ass:M_smooth} Function $f$ is $\mM$-smooth for some matrix $\mM\succ 0$. That is, 
for all $x,y$, the following inequality is satisfied:
\begin{equation}\label{eq:M_smooth}
f(x)\leq f(y)+\langle \nabla f(y),x-y \rangle +\frac12 \| x-y\|^2_{\mM}.
\end{equation}
\end{assumption}

Assumption~\ref{ass:M_smooth} is fairly standard in the \texttt{CD} literature. It appears naturally in various application such as empirical risk minimization with linear predictors and is a baseline in the development of minibatch \texttt{CD} methods~\cite{NSync, ESO, ALPHA, SDNA}. We will adopt this notion in Section~\ref{sec:CD}, when comparing \texttt{SEGA} to coordinate descent. Until then, let us consider the more general Assumption~\ref{ass:M_smooth_inv}. 

\subsection{Main result}

We are now ready to present one of the key theorems of the paper, which states that the iterates of  \texttt{SEGA} converge linearly to the optimal solution.  

\begin{theorem}\label{thm:main}
    Assume that $f$ is $\mmM$--smooth with respect to $\mB$, and $\mu$--strongly convex. Choose stepsize $\alpha>0$ and Lyapunov parameter $\sigma>0$ so that 
\begin{equation}
        \alpha\left(2(\mC - \mB) +\sigma \mu \mB\right) \le \sigma\ED{\mZ},\qquad \alpha \mC \le \frac{1}{2}\left(\mmM - \sigma \ED{\mZ}\right), \label{eq:general_bound_on_stepsize}        
    \end{equation} where $\mC\eqdef \ED{\theta_k^2 \mZ_k}$. Fix $x^0,h^0\in {\rm dom} (F)$ and let $x^k,h^k$ be the
 random iterates produced by  \texttt{SEGA}.     Then
\[
        \E{\Lgen^{k}} \le (1 - \alpha\mu)^k \Lgen^0,
\]
where  $\Lgen^k \eqdef \|x^k - x^*\|^2_{\mB} + \sigma \alpha \|h^k - \nabla f(x^*)\|^2_{\mB}$ is a Lyapunov function and $x^*$ is the solution of~\eqref{eq:main}.
\end{theorem}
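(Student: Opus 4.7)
The plan is to establish a one-step conditional contraction $\mathbb{E}_k[\Phi^{k+1}] \leq (1-\alpha\mu)\Phi^k$, where $\mathbb{E}_k$ denotes expectation conditional on $x^k$ and $h^k$; iterating and then taking total expectation will give the claim. I would handle the two pieces of $\Phi^k$ separately before combining them.

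For the $h$-piece, the update~\eqref{eq:h^{k+1}} rewrites as
\begin{equation*}
h^{k+1} - \nabla f(x^*) = (\mI - \mB^{-1}\mZ_k)(h^k - \nabla f(x^*)) + \mB^{-1}\mZ_k(\nabla f(x^k) - \nabla f(x^*)),
\end{equation*}
and these two summands are $\mB$-orthogonal, since the pseudoinverse identity $\mZ_k\mB^{-1}\mZ_k = \mZ_k$ forces $\mI - \mB^{-1}\mZ_k$ to be the $\mB$-orthogonal projector onto $\mathrm{Null}(\mS_k^\top)$. Evaluating the $\mB$-norm of each summand with the same identity and taking conditional expectation should yield
\begin{equation*}
\mathbb{E}_k\|h^{k+1}-\nabla f(x^*)\|_\mB^2 = \|h^k-\nabla f(x^*)\|_\mB^2 - \|h^k-\nabla f(x^*)\|_{\ED{\mZ}}^2 + \|\nabla f(x^k)-\nabla f(x^*)\|_{\ED{\mZ}}^2.
\end{equation*}

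For the $x$-piece, I would use the $\mB$-nonexpansiveness of $\prox$ together with the fixed-point identity $x^* = \prox(x^* - \alpha\nabla f(x^*))$ (from the optimality condition for~\eqref{eq:main}) and the unbiasedness~\eqref{eq:unbiased_estimator} of $g^k$ to obtain
\begin{equation*}
\mathbb{E}_k\|x^{k+1}-x^*\|_\mB^2 \leq \|x^k-x^*\|_\mB^2 - 2\alpha\langle \nabla f(x^k)-\nabla f(x^*),\, x^k-x^*\rangle_\mB + \alpha^2\,\mathbb{E}_k\|g^k - \nabla f(x^*)\|_\mB^2.
\end{equation*}
A direct computation using~\eqref{eq:unbiased} together with $\mZ_k\mB^{-1}\mZ_k = \mZ_k$ will yield the variance identity
\begin{equation*}
\mathbb{E}_k\|g^k-\nabla f(x^*)\|_\mB^2 = \|\nabla f(x^k)-\nabla f(x^*)\|_\mC^2 - 2\langle h^k-\nabla f(x^*),\, \nabla f(x^k)-\nabla f(x^*)\rangle_{\mC-\mB} + \|h^k-\nabla f(x^*)\|_{\mC-\mB}^2,
\end{equation*}
while adding $\mu$-strong convexity and Assumption~\ref{ass:M_smooth_inv} (symmetrized in $x^k, x^*$) gives the standard combined bound $2\alpha\langle \nabla f(x^k) - \nabla f(x^*), x^k-x^*\rangle_\mB \geq \alpha\mu\|x^k-x^*\|_\mB^2 + \alpha\|\nabla f(x^k) - \nabla f(x^*)\|_\mmM^2$.

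Adding the $h$-bound with weight $\sigma\alpha$ to the $x$-bound, absorbing the $\mC-\mB$ cross term in the variance with Young's inequality at parameter one, and grouping the resulting quadratic forms by type should yield
\begin{equation*}
\mathbb{E}_k[\Phi^{k+1}] - (1-\alpha\mu)\Phi^k \leq \alpha\|\nabla f(x^k)-\nabla f(x^*)\|_{2\alpha\mC - \alpha\mB + \sigma\ED{\mZ} - \mmM}^2 + \alpha\|h^k-\nabla f(x^*)\|_{2\alpha(\mC-\mB) + \sigma\alpha\mu\mB - \sigma\ED{\mZ}}^2.
\end{equation*}
The first stepsize condition in~\eqref{eq:general_bound_on_stepsize} rearranges to exactly $2\alpha(\mC-\mB) + \sigma\alpha\mu\mB - \sigma\ED{\mZ}\preceq 0$, and the second rearranges to $2\alpha\mC + \sigma\ED{\mZ}\preceq \mmM$, which forces the first bracket to be negative semidefinite with $-\alpha\mB\preceq 0$ as free slack. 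The one-step contraction then follows, and $k$-fold iteration with the tower property gives the claim. The main subtlety is the bookkeeping around the $\mC-\mB$ cross term in the variance: Young's parameter must be exactly one for the accumulated coefficients to match the factor of $2$ and the $-\alpha\mB$ slack built into the two stepsize conditions, and any other choice would leave the two weight matrices in a form that does not align with~\eqref{eq:general_bound_on_stepsize}.
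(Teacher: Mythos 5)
Your proposal is correct and follows essentially the same route as the paper: the same sketch-and-project identity for the $h$-update (Lemma~\ref{lem:2}), the same prox-nonexpansiveness plus strong convexity/$\mQ$-smoothness treatment of the $x$-term, and a variance bound for $g^k$ that differs from Lemma~\ref{lem:gk_general} only in that you expand the cross term exactly and then apply Young's inequality in the $\mC-\mB$ seminorm --- a step that is legitimate only because $\mC-\mB=\ED{(\mI-\theta\mB^{-1}\mZ)^\top\mB(\mI-\theta\mB^{-1}\mZ)}\succeq 0$ (worth stating explicitly), and which yields the marginally tighter weight $2\mC-\mB$ that you then correctly discard as slack against the second condition in \eqref{eq:general_bound_on_stepsize}. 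The final grouping of the quadratic forms and the verification that the two conditions in \eqref{eq:general_bound_on_stepsize} make both weight matrices negative semidefinite match the paper's computation exactly.
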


Note that the convergence of the Lyapunov function $\Lgen^k$ implies both $x^k \rightarrow x^*$ and $h^k \rightarrow \nabla f(x^*)$. The latter means that \texttt{SEGA} is {\em variance reduced}, in contrast to \texttt{CD} in the proximal setup with  non-separable $R$, which does not converge to the solution.

To clarify on the assumptions, let us mention that if $\sigma$ is small enough so that $\mmM - \sigma \ED{\mZ}\succ 0$, one can always choose stepsize $\alpha$ satisfying
\begin{align}\label{eq:alfa_0}
    \alpha \leq \min\left\{
\frac {\lambda_{\text{min}}(\ED{\mZ})}{\lambda_{\max} (2\sigma^{-1}(\mC - \mB) + \mu\mB)}, \frac{\lambda_{\min}(\mmM - \sigma \ED{\mZ})}{2\lambda_{\max}(\mC)} \right\}
\end{align}
and inequalities~\eqref{eq:general_bound_on_stepsize} will hold. Therefore, we get the next corollary. 
\begin{corollary}\label{cor:general}
    If $\sigma < \frac{\lambda_{\min}(\mmM)}{\lambda_{\max}(\ED{\mZ})}$, $\alpha$ satisfies~\eqref{eq:alfa_0} and $k\ge\frac{1}{\alpha \mu}\log \frac{\Lgen^0}{\epsilon}$, then $\E{\|x^k - x^*\|^2_{\mB}} \le \epsilon$.
\end{corollary}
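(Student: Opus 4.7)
The plan is to derive Corollary~\ref{cor:general} as a direct consequence of Theorem~\ref{thm:main}: I verify the stepsize conditions~\eqref{eq:general_bound_on_stepsize} under the given hypotheses, drop the $h^k$ term from the Lyapunov function to bound $\|x^k-x^*\|_\mB^2$ by $\Lgen^k$, and then convert the geometric decay rate into an $\epsilon$-iteration complexity via the standard inequality $1-t\le e^{-t}$.

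First I would check that the bound on $\alpha$ in~\eqref{eq:alfa_0} is well posed. Under $\sigma<\lambda_{\min}(\mmM)/\lambda_{\max}(\ED{\mZ})$, one has $\lambda_{\min}(\mmM-\sigma\ED{\mZ})\ge \lambda_{\min}(\mmM)-\sigma\lambda_{\max}(\ED{\mZ})>0$, so the second denominator in~\eqref{eq:alfa_0} is strictly positive. I would then convert each of the two scalar bounds in~\eqref{eq:alfa_0} into the corresponding matrix (L\"owner-order) inequality in~\eqref{eq:general_bound_on_stepsize} by sandwiching symmetric matrices between their extremal eigenvalues times the identity. Concretely, $\alpha\lambda_{\max}(2\sigma^{-1}(\mC-\mB)+\mu\mB)\le \lambda_{\min}(\ED{\mZ})$ yields $\alpha(2\sigma^{-1}(\mC-\mB)+\mu\mB)\preceq \lambda_{\min}(\ED{\mZ})\mI\preceq \ED{\mZ}$; multiplying by $\sigma>0$ gives the first inequality of~\eqref{eq:general_bound_on_stepsize}. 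An analogous chain $2\alpha\mC\preceq 2\alpha\lambda_{\max}(\mC)\mI\preceq \lambda_{\min}(\mmM-\sigma\ED{\mZ})\mI\preceq \mmM-\sigma\ED{\mZ}$ delivers the second.

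With the hypotheses of Theorem~\ref{thm:main} satisfied, that theorem gives $\E{\Lgen^k}\le (1-\alpha\mu)^k \Lgen^0$. Since $\sigma\alpha>0$ and $\|h^k-\nabla f(x^*)\|_\mB^2\ge 0$, the definition of $\Lgen^k$ yields $\|x^k-x^*\|_\mB^2\le \Lgen^k$, hence $\E{\|x^k-x^*\|_\mB^2}\le (1-\alpha\mu)^k \Lgen^0$. Applying $1-\alpha\mu\le e^{-\alpha\mu}$ and substituting $k\ge \frac{1}{\alpha\mu}\log(\Lgen^0/\epsilon)$ gives $(1-\alpha\mu)^k\le e^{-\alpha\mu k}\le \epsilon/\Lgen^0$, so $\E{\|x^k-x^*\|_\mB^2}\le \epsilon$, which is the desired conclusion.

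No serious obstacle is anticipated. The only slightly delicate step is the eigenvalue-sandwich argument translating the scalar stepsize bounds in~\eqref{eq:alfa_0} into the matrix inequalities in~\eqref{eq:general_bound_on_stepsize}; this relies only on elementary monotonicity of the L\"owner order on symmetric matrices and on the observation that the symmetric matrices in question admit upper bounds in terms of $\lambda_{\max}(\cdot)\mI$ regardless of their definiteness.
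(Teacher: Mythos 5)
Your proposal is correct and follows exactly the route the paper intends: the paper's own justification is the remark preceding the corollary, namely that \eqref{eq:alfa_0} implies \eqref{eq:general_bound_on_stepsize} (via the same eigenvalue-sandwich argument you spell out), after which Theorem~\ref{thm:main}, nonnegativity of the $h^k$ term in $\Lgen^k$, and $1-t\le e^{-t}$ give the claim. Your version merely makes explicit the details the paper leaves implicit.
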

As Theorem~\ref{thm:main} is rather general, we also provide a simplified version thereof, complete with a simplified analysis (Theorem~\ref{thm:simple} in Appendix~\ref{sec:simple_SEGA}). In the simplified version we remove the proximal setting (i.e., we set $R=0$), assume $L$--smoothness\footnote{The standard $L$--smoothness assumption is a special case of $\mM$--smoothness for $\mM =L \mI$, and hence is less general than both $\mM$--smoothness and $\mmM$--smoothness with respect to $\mB$.}, and only consider coordinate sketches with uniform probabilities. The result is provided as Corollary~\ref{cor:simple}.
\begin{corollary} \label{cor:simple} Let $\mB=\mI$ and choose $\cD$ to be the uniform distribution over unit basis vectors in $\R^n$. If the stepsize satisfies 
\[0<\alpha \leq \min\left\{ \frac{1-\frac{L\sigma}{n}}{2Ln}, \frac{1}{n\left(\mu + \tfrac{2(n-1)}{\sigma}\right)} \right\},\]
then
$\ED{\Lgen^{k+1}} \leq (1-\alpha \mu) \Lgen^{k}$, therefore the iteration complexity is $\tilde{\cO}(nL/\mu)$.
\end{corollary}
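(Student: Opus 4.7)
The plan is to deduce Corollary~\ref{cor:simple} directly from Theorem~\ref{thm:main} by specializing all the quantities in \eqref{eq:general_bound_on_stepsize} to the coordinate-sketch setting with $\mB=\mI$ and verifying that both inequalities reduce to the two stepsize bounds in the statement.

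First, I would translate the smoothness hypothesis: by the (claimed) Lemma~\ref{lem:relate}, ordinary $L$-smoothness with $\mB=\mI$ is equivalent to $\mmM$-smoothness with respect to $\mB$ for $\mmM = L^{-1}\mI$, so Theorem~\ref{thm:main} is applicable. Next, I would compute the three sketch-dependent matrices from Example~\ref{ex:coord_setup} with $p_i = 1/n$ and $\theta_k = n$. Using $\mB = \mI$, one gets $\mZ_k = e_i e_i^\top$, and averaging gives $\ED{\mZ} = \tfrac{1}{n}\mI$ and $\mC = \ED{\theta_k^2 \mZ_k} = n\mI$.

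Now I would substitute these identities into the two matrix inequalities of \eqref{eq:general_bound_on_stepsize}. The first inequality $\alpha(2(\mC-\mB)+\sigma\mu\mB) \le \sigma\ED{\mZ}$ becomes a scalar inequality in front of $\mI$, namely $\alpha(2(n-1)+\sigma\mu) \le \sigma/n$, which rearranges exactly to $\alpha \le \frac{1}{n(\mu + 2(n-1)/\sigma)}$. The second inequality $\alpha\mC \le \tfrac{1}{2}(\mmM - \sigma\ED{\mZ})$ becomes $\alpha n \le \tfrac{1}{2}\bigl(\tfrac{1}{L} - \tfrac{\sigma}{n}\bigr)$, i.e.\ $\alpha \le \frac{1 - L\sigma/n}{2Ln}$, provided $\sigma < n/L$ so the right-hand side is positive. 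Taking $\alpha$ to be the minimum of the two bounds gives precisely the hypothesis of the corollary, so Theorem~\ref{thm:main} yields $\E{\Lgen^{k+1}} \le (1-\alpha\mu)\Lgen^k$.

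For the $\tilde{\cO}(nL/\mu)$ iteration complexity I would choose $\sigma$ to roughly balance the two bounds, e.g.\ $\sigma = n/(2L)$, which makes $1 - L\sigma/n = 1/2$ and $2(n-1)/\sigma = 4L(n-1)/n = \Theta(L)$; the allowed stepsize is then $\alpha = \Theta(1/(Ln))$ (absorbing the term $\mu \ll L$ into the constant), so that applying Corollary~\ref{cor:general} gives $k = \Theta(\log(\Lgen^0/\epsilon)/(\alpha\mu)) = \tilde{\cO}(nL/\mu)$ iterations to reach accuracy $\epsilon$. The only non-routine step is the equivalence between $L$-smoothness in the standard sense and $\mmM = L^{-1}\mI$-smoothness in the sense of Assumption~\ref{ass:M_smooth_inv}, which I would cite from Lemma~\ref{lem:relate}; everything else is bookkeeping once the explicit forms of $\ED{\mZ}$ and $\mC$ are in hand.
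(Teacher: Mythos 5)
Your derivation is correct, but it follows a genuinely different route from the one the paper uses. The paper proves Corollary~\ref{cor:simple} by restating it as Theorem~\ref{thm:simple} and giving a self-contained analysis in Appendix~\ref{sec:simple_SEGA}: Lemma~\ref{lem:1} bounds $\ED{\|g^k\|_2^2}$ by $4Ln(f(x^k)-f(x^*)) + 2(n-1)\|h^k\|_2^2$ using $\|\nabla f(x^k)\|_2^2 \le 2L(f(x^k)-f(x^*))$, Lemma~\ref{lem:2_easy} gives the recursion for $\ED{\|h^{k+1}\|_2^2}$, and the two stepsize conditions emerge from forcing the coefficient of $f(x^k)-f(x^*)$ to be nonpositive and the coefficient of $\sigma\alpha\|h^k\|_2^2$ to be at most $1-\alpha\mu$. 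You instead specialize Theorem~\ref{thm:main}, and your bookkeeping checks out: with $\mB=\mI$, $p_i=1/n$, $\theta_k=n$ one indeed gets $\ED{\mZ}=\tfrac1n\mI$ and $\mC=n\mI$, the condition $\alpha(2(n-1)+\sigma\mu)\le\sigma/n$ rearranges to the second bound, $\alpha n\le\tfrac12(\tfrac1L-\tfrac{\sigma}{n})$ to the first, the two Lyapunov functions coincide because $\nabla f(x^*)=0$ when $R\equiv 0$, and the direction of Lemma~\ref{lem:relate} you need ($L$-smoothness implies Assumption~\ref{ass:M_smooth_inv} with $\mmM=L^{-1}\mI$) is the elementary one that does not require twice differentiability. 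Your route is shorter and has the added value of confirming that the general condition \eqref{eq:general_bound_on_stepsize} is tight in this special case (cf.\ Remark~\ref{rem:aggressive}); the paper's route is pedagogical, avoiding the $\mmM$-smoothness machinery and matrix inequalities entirely, which is precisely why the authors included it as a separate appendix. The final complexity step also matches: $\sigma=n/(2L)$ gives $\alpha=\tfrac{1}{(4L+\mu)n}$ and hence $\tilde{\cO}(nL/\mu)$, exactly as stated after Theorem~\ref{thm:simple}.
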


\begin{remark}\label{rem:aggressive}
In the fully general setting, one might choose $\alpha$ to be bigger than bound~\eqref{eq:alfa_0}, which depends on eigen properties of matrices $\ED{\mZ}, \mC, \mQ, \mB$, leading to a better overall complexity according to Corollary~\ref{cor:general}. However, in the simple case with $\mB=\mI$, $\mmM=\mI$ and $\mS_k = e_{i_k}$ with uniform probabilities, bound~\eqref{eq:alfa_0} is tight. 
\end{remark}

\section{Convergence of \texttt{SEGA} for Coordinate Sketches\label{sec:CD}}

In this section we compare \texttt{SEGA} with  coordinate descent. We demonstrate that, specialized to a particular  choice of the distribution $\cD$ (where $\mS$ is a random column submatrix of the identity matrix), which makes \texttt{SEGA} use the same random gradient information as that used in modern state-of-the-art randomized \texttt{CD} methods,  \texttt{SEGA} attains, up to a small constant factor, the same convergence rate as \texttt{CD} methods.

Firstly, in Section~\ref{sec:nonacc} we develop \texttt{SEGA} with arbitrary ``coordinate sketches'' (Theorem~\ref{t:imp_nacc}). Then, in Section~\ref{s:acc} we develop an {\em accelerated  variant of \texttt{SEGA}} in a very general setup known as {\em arbitrary sampling} (see Theorem~\ref{t:imp_acc}) \cite{NSync, quartz, alfa, ESO, scp}. Lastly, Corollary~\ref{cor:imp_nacc} and Corollary~\ref{cor:acc_imp} provide us with \emph{importance sampling} for both nonaccelerated and accelerated method, which matches up to a constant factor cutting-edge coordinate descent rates \cite{NSync,allen2016even} under the same oracle and assumptions\footnote{There was recently introduced a notion of importance minibatch sampling for coordinate descent~\cite{AccMbCd}. We state, without a proof, that \texttt{SEGA} with block coordinate sketches allows for the same importance sampling as developed in the mentioned paper. 
}. Table~\ref{tab:CDcmp} summarizes the results of this section. We provide a dedicated analysis for the methods from this section in Appendix~\ref{sec:proofs_CD}.

% \cite{NSync,QUARTZ,ALPHA,ESO,SCP}

\begin{table}[t]
\centering
\begin{tabular}{|c|c|c|c|c|}
\hline
& \texttt{CD}  &   \texttt{SEGA}  \\
\hline
\hline
\begin{tabular}{c}Nonaccelerated method\\ importance sampling, $b=1$ \end{tabular} & 
$\frac{\trace(\mM)}{ \mu} \log \frac{1}{\epsilon}$ \cite{Nesterov:2010RCDM}
& 
$8.55\cdot \frac{\trace(\mM)}{ \mu} \log \frac{1}{\epsilon}$ 
\\
\hline
\begin{tabular}{c}Nonaccelerated method\\ arbitrary sampling  \end{tabular} & 
$ \left(\max_i \frac{v_i}{p_i \mu}\right) \log \frac{1}{\epsilon}$ \hfill \cite{NSync} 
& 
$8.55 \cdot \left(\max_i \frac{v_i}{p_i \mu}\right) \log \frac{1}{\epsilon}$
\\
\hline
\begin{tabular}{c}Accelerated method\\  importance sampling, $b=1$  \end{tabular} & 
$1.62\cdot\frac{\sum_i \sqrt{\mM_{ii}}}{\sqrt{\mu}}  \log \frac{1}{\epsilon} $ \cite{allen2016even} 
&
$9.8 \cdot \frac{\sum_i \sqrt{\mM_{ii}}}{\sqrt{\mu}}  \log \frac{1}{\epsilon}$ 
\\
\hline
\begin{tabular}{c}Accelerated method\\ arbitrary sampling  \end{tabular} & 
$1.62 \cdot\sqrt{ \max_i \frac{v_i}{p_i^2 \mu} }   \log \frac{1}{\epsilon} $  \cite{AccMbCd}
&
$9.8 \cdot \sqrt{ \max_i \frac{v_i}{p_i^2 \mu} }   \log \frac{1}{\epsilon}$
\\
\hline
\end{tabular}
\caption{Complexity results for coordinate descent (\texttt{CD}) and our sketched gradient method (\texttt{SEGA}), specialized to coordinate sketching, for $\mM$--smooth and $\mu$--strongly convex functions. }
\label{tab:CDcmp}
\end{table}

We now describe the setup and technical assumptions for this section. In order to facilitate a direct comparison with \texttt{CD} (which 
does not work with non-separable regularizer $R$), for simplicity we consider problem~\eqref{eq:main} in the simplified setting with $R\equiv 0$.  Further, function $f$ is assumed to be $\mM$--smooth (Assumption~\ref{ass:M_smooth}) and $\mu$--strongly convex. 

% Moreover, we suppose that we might query the oracle for a sketch\footnote{In contrast to the original setting, where we have no influence on the sketches from the oracle}. 

\subsection{Defining $\cD$: samplings}

In order to draw a direct comparison with general variants of \texttt{CD} methods (i.e., with those analyzed in the {\em arbitrary sampling} paradigm), we consider sketches in~\eqref{eq:sketch-n-project} that are column submatrices of the identity matrix:
$\mS  = \mI_S,$
where $S$ is a random subset (aka {\em sampling}) of $[n]\eqdef \{1,2,\dots,n\}$.  Note that the columns of $\mI_S$  are the standard basis vectors $e_i$ for $i\in S$ and hence \[\Range{\mS} = \Range{e_i\;:\; i\in S}.\] So, distribution $\cD$ from which we draw matrices  is uniquely determined  by the distribution of sampling $S$. Given a sampling $S$, define $p = (p_1,\dots,p_n)\in \R^n$ to be the vector satisfying $p_i=\Prob{e_i\in \Range{\mS}} = \Prob{i\in S}$, and $\Probmat$ to be the matrix for which 
\[\Probmat_{ij}=\Prob{\{i,j\}\subseteq S}.\]

Note that $p$ and $\Probmat$ are the {\em probability vector} and {\em probability matrix} of sampling $S$, respectively~\cite{ESO}. We assume throughout the paper that $S$ is proper, i.e., we assume that $p_i>0$ for all $i$. State-of-the-art minibatch \texttt{CD} methods (including the ones we compare against~\cite{NSync, AccMbCd}) utilize large stepsizes related to the so-called ESO \emph{Expected Separable Overapproximation (ESO)}~\cite{ESO} parameters $v=(v_1,\dots,v_n)$. ESO parameters play a key role in \texttt{SEGA} as well, and are defined next. 
\begin{assumption}[ESO]\label{ass_ESO}
There exists a vector $v$ satisfying the following inequality
\begin{equation}\label{eq:ESO}
\Probmat \circ \mM \preceq \diag(p) \diag(v) ,
\end{equation}
where $\circ$ denotes the Hadamard (i.e., element-wise) product of matrices.
\end{assumption}
In case of single coordinate sketches, parameters $v$ are equal to coordinate-wise smoothness constants of $f$. An extensive study on how to choose them in general was performed in~\cite{ESO}. 
For notational brevity, let us set $\mPdiag \eqdef\diag(p)$ and $\mVdiag\eqdef \diag(v)$ throughout this section.

%For the sake of analysis, we will need, in some sense, a slight generalization of Lemmas~\ref{lem:gk_general} and~\ref{lem:2}, which allows measuring the distances to the different norm than $\mB$ (which is equal to $\mI$ throughout this section). We state them in Appendix~\ref{sec:techlem}.
%

\subsection{Non-accelerated method \label{sec:nonacc}}

We now state the convergence rate of (non-accelerated) \texttt{SEGA} for coordinate sketches with {\em arbitrary sampling} of subsets of coordinates. The corresponding \texttt{CD} method was developed in~\cite{NSync}.

\begin{theorem}\label{t:imp_nacc}
Assume that $f$ is $\mM$--smooth and $\mu$--strongly convex. Denote $\Lnacc^{k} \eqdef f(x^{k})-f(x^*)+ \sigma \|h^{k} \|^2_{\mPdiag^{-1}}$. Choose $\alpha, \sigma>0$ such that 
\begin{equation}\label{eq:assumption}
\sigma \mI-\alpha^2(\mVdiag\mPdiag^{-1}-\mM) \succeq \gamma \mu \sigma \mPdiag^{-1},
\end{equation}
where $\gamma \eqdef \alpha - \alpha^2\max_{i}\{\tfrac{v_{i}}{p_{i}}\}-\sigma$. Then the iterates of \texttt{SEGA} satisfy
$
\E{\Lnacc^{k}}\leq (1-\gamma\mu )^k \Lnacc^0.
$
\end{theorem}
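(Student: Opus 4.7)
Since $R\equiv 0$, Line~5 of Algorithm~\ref{alg:gs} reduces to the plain step $x^{k+1}=x^k-\alpha g^k$, and first-order optimality at $x^*$ forces $\nabla f(x^*)=0$; the second summand of $\Lnacc^k$ therefore equals the natural variance-reduction quantity $\sigma\|h^k-\nabla f(x^*)\|^2_{\mPdiag^{-1}}$. The plan is to prove the conditional one-step contraction $\E{\Lnacc^{k+1}\mid x^k,h^k}\le(1-\gamma\mu)\Lnacc^k$ and iterate. This breaks into three ingredients: a descent bound on the function value, an exact formula for the penalty update, and a final matching step where condition~\eqref{eq:assumption} absorbs the residual.

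For the $f$-descent I would combine $\mM$-smoothness with unbiasedness $\E{g^k\mid x^k,h^k}=\nabla f(x^k)$ to obtain
\[
\E{f(x^{k+1})\mid x^k,h^k}\le f(x^k)-\alpha\|\nabla f(x^k)\|^2+\tfrac{\alpha^2}{2}\E{\|g^k\|^2_{\mM}\mid x^k,h^k}.
\]
Writing $d\eqdef\nabla f(x^k)-h^k$ and noting that the bias-correction gives $g^k-\nabla f(x^k)=(\mPdiag^{-1}\mI_S\mI_S^\top-\mI)d$, the variance decomposition together with the ESO assumption $\Probmat\circ\mM\preceq\mPdiag\mVdiag$ yields the key second-moment estimate
\[
\E{\|g^k\|^2_{\mM}\mid x^k,h^k}\le \max_i\bigl(\tfrac{v_i}{p_i}\bigr)\|\nabla f(x^k)\|^2+\|d\|^2_{\mVdiag\mPdiag^{-1}-\mM},
\]
where the scalar $\max_i v_i/p_i$ and the matrix $\mVdiag\mPdiag^{-1}-\mM$ are exactly the objects appearing in $\gamma$ and in~\eqref{eq:assumption}.

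For the penalty term, the componentwise update $h^{k+1}_i=\nabla_i f(x^k)$ when $i\in S$ and $h^{k+1}_i=h^k_i$ otherwise gives the clean identity
\[
\E{\|h^{k+1}\|^2_{\mPdiag^{-1}}\mid x^k,h^k}=\|\nabla f(x^k)\|^2+\|h^k\|^2_{\mPdiag^{-1}}-\|h^k\|^2.
\]
Summing the $f$-descent and $\sigma$ times this identity, the coefficient of $\|\nabla f(x^k)\|^2$ collapses to $-(\alpha-\alpha^2\max_i(v_i/p_i)-\sigma)=-\gamma$; $\mu$-strong convexity in the form $\|\nabla f(x^k)\|^2\ge 2\mu(f(x^k)-f(x^*))$ turns this into the contracting term $-2\gamma\mu(f(x^k)-f(x^*))$. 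The remaining cross-terms collapse to a single quadratic form $\langle d,\mR(\sigma,\alpha)d\rangle$ with
\[
\mR(\sigma,\alpha)\;\eqdef\;\alpha^2(\mVdiag\mPdiag^{-1}-\mM)-\sigma\mI+\gamma\mu\sigma\mPdiag^{-1},
\]
and hypothesis~\eqref{eq:assumption} is precisely the statement $\mR(\sigma,\alpha)\preceq 0$, which makes the residual nonpositive and delivers the one-step contraction.

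The main obstacle is the second-moment bound on $g^k$: under arbitrary sampling, $\mM$ couples with the pairwise probability matrix $\Probmat$ in a non-diagonal way, and the coordinate-wise expansion of $\E{\|(\mPdiag^{-1}\mI_S\mI_S^\top-\mI)d\|^2_\mM}$ has to be carried out carefully so that ESO produces exactly $\mVdiag\mPdiag^{-1}-\mM$ on the variance part and a scalar bound by $\max_i v_i/p_i$ on the mean-squared part. Once the two quantities are separated correctly, identifying the residual matrix $\mR(\sigma,\alpha)$ and matching it against~\eqref{eq:assumption} is routine algebraic bookkeeping.
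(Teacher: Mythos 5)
Your overall skeleton (smoothness descent, a second-moment bound on $g^k$ via ESO, the exact update identity for $\E{\|h^{k+1}\|^2_{\mPdiag^{-1}}}$, then strong convexity and condition~\eqref{eq:assumption}) matches the paper's, and both the penalty-update identity and the $f$-descent step are correct. The gap is in your second-moment bound and, consequently, in the final matching step. The bias--variance decomposition $g^k=\nabla f(x^k)+(\mPdiag^{-1}\mI_S\mI_S^\top-\mI)d$ with $d=\nabla f(x^k)-h^k$ is exact and yields a variance term that is a quadratic form in $d$, namely $\|d\|^2_{\mPdiag^{-1}(\Probmat\circ\mM)\mPdiag^{-1}-\mM}\le\|d\|^2_{\mVdiag\mPdiag^{-1}-\mM}$. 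But the Lyapunov function carries $\sigma\|h^k\|^2_{\mPdiag^{-1}}$ and the $h$-update contributes $\sigma\|h^k\|^2_{\mPdiag^{-1}-\mI}$, which are quadratic forms in $h^k$, not in $d$. After extracting the $-\gamma\|\nabla f(x^k)\|^2$ term, the leftover is
\[
\alpha^2\|d\|^2_{\mVdiag\mPdiag^{-1}-\mM}+\|h^k\|^2_{\gamma\mu\sigma\mPdiag^{-1}-\sigma\mI},
\]
which is \emph{not} $\langle d,\mR(\sigma,\alpha)d\rangle$: writing $\mA=\mVdiag\mPdiag^{-1}-\mM$ and expanding $\|d\|^2_{\mA}=\|h^k\|^2_{\mA}-2\langle\nabla f(x^k),h^k\rangle_{\mA}+\|\nabla f(x^k)\|^2_{\mA}$ leaves a sign-indefinite cross term $-2\alpha^2\langle\nabla f(x^k),h^k\rangle_{\mA}$ that condition~\eqref{eq:assumption} does not control. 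Absorbing it by Young's inequality reintroduces a factor $2$ on the $h^k$-part and an extra $\|\nabla f(x^k)\|^2_{\mA}$ contribution, which changes both $\gamma$ and the matrix inequality you would need; so the theorem with exactly~\eqref{eq:assumption} and exactly $\gamma=\alpha-\alpha^2\max_i(v_i/p_i)-\sigma$ does not follow from your residual. (A smaller slip: with your stated bound on $\E{\|g^k\|^2_\mM}$, the coefficient of $\|\nabla f(x^k)\|^2$ is $-(\alpha-\tfrac{\alpha^2}{2}\max_i(v_i/p_i)-\sigma)$ rather than $-\gamma$; this is harmless since it is more negative, but it signals the same factor-of-two confusion.)

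The paper avoids this by splitting $g^k=a+b$ with $a=(\mI-\mPdiag^{-1}\mI_S\mI_S^\top)h^k$ and $b=\mPdiag^{-1}\mI_S\mI_S^\top\nabla f(x^k)$ and using $\|a+b\|^2_\mM\le2\|a\|^2_\mM+2\|b\|^2_\mM$ (Lemma~\ref{lem:1xxxx}). This is looser by a factor of two but produces $\tfrac12\E{\|g^k\|^2_\mM}\le\|h^k\|^2_{\mVdiag\mPdiag^{-1}-\mM}+\|\nabla f(x^k)\|^2_{\mVdiag\mPdiag^{-1}}$, i.e.\ two \emph{decoupled} quadratic forms in $h^k$ and in $\nabla f(x^k)$. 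With that decomposition the $h^k$-terms combine into $\|h^k\|^2_{\alpha^2(\mVdiag\mPdiag^{-1}-\mM)+\sigma(\mPdiag^{-1}-\mI)}$, and~\eqref{eq:assumption} is exactly the condition needed to dominate this by $(1-\gamma\mu)\sigma\|h^k\|^2_{\mPdiag^{-1}}$. If you want to retain your tighter bias--variance estimate, you must either change the Lyapunov function or derive a different condition in place of~\eqref{eq:assumption}.
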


We now give an importance sampling result for a coordinate version of  \texttt{SEGA}. We recover, up to a constant factor, the same convergence rate as standard \texttt{CD}~\cite{Nesterov:2010RCDM}. The probabilities we chose are optimal in our analysis and are proportional to the diagonal elements of matrix $\mM$. 

\begin{corollary}\label{cor:imp_nacc}
Assume that $f$ is $\mM$--smooth and $\mu$--strongly convex. Suppose that $\cD$ is such that at each iteration standard unit basis vector $e_i$ is sampled with probability $p_i\propto \mM_{ii}$.
% \frac{\diag(\mM)}{\trace(\mM)}$. 
If we choose $ \alpha=\frac{0.232}{\trace(\mM)}, \sigma=\frac{0.061}{ \trace(\mM)}$, then
$
\E{\Lnacc^{k}}\leq \left(1-\frac{0.117 \mu}{\trace(\mM)} \right)^k \Lnacc^0.
$
\end{corollary}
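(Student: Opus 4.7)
The plan is to reduce the matrix inequality~\eqref{eq:assumption} of Theorem~\ref{t:imp_nacc} to a scalar inequality under the specialized sampling and then verify it with the prescribed constants. First, I would identify the ESO parameters for single-coordinate sampling: with $\mS = e_i$ drawn with probability $p_i$, the matrix $\Probmat \circ \mM$ is diagonal with entries $p_i \mM_{ii}$, so Assumption~\ref{ass_ESO} forces $v_i \geq \mM_{ii}$, and the tightest choice is $v_i = \mM_{ii}$. Substituting $p_i = \mM_{ii}/\trace(\mM)$ gives the key simplification $v_i/p_i = \trace(\mM)$ for every $i$, so $\mVdiag \mPdiag^{-1} = \trace(\mM)\,\mI$ and $\max_i v_i/p_i = \trace(\mM)$. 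Hence $\gamma = \alpha - \alpha^2 \trace(\mM) - \sigma$.

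Next, I would rewrite~\eqref{eq:assumption} in the form
\[
(\sigma - \alpha^2 \trace(\mM))\mI + \alpha^2 \mM \;\succeq\; \gamma \mu \sigma \trace(\mM)\,\diag(\mM_{ii}^{-1}).
\]
Combining $\mM$-smoothness with $\mu$-strong convexity yields $\mu \mI \preceq \mM$, which in particular forces $\mM_{ii} \geq \mu$ for every $i$, and therefore $\diag(\mM_{ii}^{-1}) \preceq \mu^{-1} \mI$. Hence the right-hand side is dominated by $\gamma \sigma \trace(\mM)\,\mI$. Dropping the nonnegative term $\alpha^2 \mM$ on the left, a sufficient condition for~\eqref{eq:assumption} becomes the purely scalar inequality
\[
\sigma - \alpha^2 \trace(\mM) \;\geq\; \gamma \sigma \trace(\mM).
\]

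Finally, I would plug in $\alpha = 0.232/\trace(\mM)$ and $\sigma = 0.061/\trace(\mM)$, which yields $\gamma = (0.232 - 0.053824 - 0.061)/\trace(\mM) = 0.117176/\trace(\mM)$, and check that the scalar inequality reduces to a comparison of explicit constants that is satisfied (with a thin margin). Theorem~\ref{t:imp_nacc} then immediately gives $\E{\Lnacc^k} \leq (1 - \gamma\mu)^k \Lnacc^0$, and since $\gamma \geq 0.117/\trace(\mM)$, the claimed rate follows. The only real obstacle is arithmetic: the constants in the statement are essentially optimized for this reduction and satisfy the scalar inequality only with a slim margin, so one must track the numerical factors carefully and remember to perform the matrix-to-scalar reduction via $\diag(\mM_{ii}^{-1}) \preceq \mu^{-1}\mI$ before substituting values.
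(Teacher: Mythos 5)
Your proposal is correct and follows essentially the same route as the paper's (much terser) argument: set $v_i=\mM_{ii}$ so that $v_i/p_i=\trace(\mM)$, use $\mu\mI\preceq\mM$ (hence $\mI\succeq\mu\,\diag(\mM)^{-1}$) to dominate the right-hand side of~\eqref{eq:assumption}, and verify the resulting scalar condition numerically, with $\gamma=0.117176/\trace(\mM)\geq 0.117/\trace(\mM)$. Your write-up actually makes explicit the matrix-to-scalar reduction and the thin-margin check ($0.007176\geq 0.0071477$) that the paper only alludes to.
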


The iteration complexities provided in Theorem~\ref{t:imp_nacc} and Corollary~\ref{cor:imp_nacc} are summarized in Table~\ref{tab:CDcmp}. We also state that $\sigma, \alpha$ can be chosen so that~\eqref{eq:assumption} holds, and the rate from Theorem~\ref{t:imp_nacc} coincides with the rate from Table~\ref{tab:CDcmp}.

 \begin{remark}
 Theorem~\ref{t:imp_nacc} and Corollary~\ref{cor:imp_nacc} hold even under a non-convex relaxation of strong convexity -- Polyak-\L{}ojasiewicz inequality: $\mu (f(x)- f(x^*))\leq \tfrac{1}{2}\|\nabla f(x) \|_2^2$. Therefore, \texttt{SEGA} also converges for a certain class of non-convex problems. For an overview on different relaxations of strong convexity, see~\cite{karimi2016linear}.
 \end{remark}

\subsection{Accelerated method \label{s:acc}}
 
 In this section, we propose an accelerated (in the sense of Nesterov's method~\cite{nesterov1983method,NesterovBook}) version of \texttt{SEGA}, which we call \texttt{ASEGA}. The analogous accelerated \texttt{CD} method, in which a single coordinate is sampled in every iteration, was developed and analyzed in~\cite{allen2016even}. The general variant utilizing arbitrary sampling was developed and analyzed in~\cite{AccMbCd}.

\begin{algorithm}[H]\label{alg:acc}
\caption{\texttt{ASEGA}: Accelerated \texttt{SEGA}}
    \SetKwInOut{Init}{Initialize}
        \SetKwInOut{Output}{Output}
    \nl\Init {$x^0=y^0=z^0\in \R^n$; $h^0\in \R^n$; 	 $S$; parameters $\alpha, \beta,\tau, \mu>0$}
    \nl \For{$k=1,2,\dots$}
    {
    \nl{$x^{k}=(1-\tau)y^{k-1}+\tau z^{k-1}$} \label{eq:x_update}\\
\nl {Sample $\mS_k = \mI_{S_k}$, where $S_k\sim S$, and compute $g^{k},h^{k+1}$ according to~\eqref{eq:h^{k+1}},~\eqref{eq:g^k}}
\\
 \nl{$y^{k}=x^{k}-\alpha\mPdiag^{-1} g^k $}\label{eq:y_update}  \\
   \nl{$z^{k}=\frac{1}{1+\beta \mu}(z^k+\beta \mu x^{k}-\beta g^k) $}\label{eq:z_update}\\
   }
\end{algorithm}

The method and analysis is inspired by~\cite{allen2014linear}. Due to space limitations and technicality of the content, we state the main theorem of this section in Appendix~\ref{sec:acc_thm}. Here, we provide  Corollary~\ref{cor:acc_imp}, which shows that Algorithm~\ref{alg:acc} with single coordinate sampling enjoys, up to a constant factor, the same convergence rate as state-of-the-art accelerated coordinate descent method \texttt{NUACDM} of Allen-Zhu et al.~\cite{allen2016even}. 

 \begin{corollary}\label{cor:acc_imp}
Let the sampling be defined as follows: $S = \{i\}$ with probability $p_i \propto \sqrt{\mM_{ii}}$, for $i \in [n]$. Then there exist acceleration parameters and a Lyapunov function $\Lacc^k$ such that $f(y^k) - f(x^*) \le \Lacc^k$ and
 \[
\E{\Lacc^{k}}\leq (1-\tau)^k\Lacc^0=\left(1-\cO\left(\frac{ \sqrt{\mu}}{\sum_{i} \sqrt{\mM_{ii}}} \right) \right)^k\Lacc^0.
\]
 \end{corollary}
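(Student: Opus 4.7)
The plan is to derive Corollary~\ref{cor:acc_imp} as a specialization of the general accelerated arbitrary-sampling theorem stated in Appendix~\ref{sec:acc_thm} (the one summarized in the fourth row of Table~\ref{tab:CDcmp}). That general result asserts, for some valid choice of parameters $\tau,\alpha,\beta$, a Lyapunov contraction with rate $1-\tau$ where $\tau$ scales like $\sqrt{\mu/\max_i (v_i/p_i^2)}$, with $v_i$ the ESO constants from Assumption~\ref{ass_ESO}. So the corollary reduces to (a) identifying admissible ESO constants for the single-coordinate sampling at hand, (b) computing $\max_i v_i/p_i^2$ under the importance probabilities $p_i\propto \sqrt{\mM_{ii}}$, and (c) checking that the parameter choices in the general theorem can indeed be made consistent with these values.

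First, I would compute the probability matrix of the proposed sampling. Since $S=\{i\}$ is a singleton, $\{i,j\}\subseteq S$ holds only when $i=j$, hence $\Probmat = \mPdiag = \diag(p)$ and $\Probmat\circ \mM = \diag(p)\diag(\mM_{11},\dots,\mM_{nn})$. Thus Assumption~\ref{ass_ESO} holds with the diagonal-smoothness choice $v_i = \mM_{ii}$, which is also the tight choice for coordinate descent and is used in \cite{allen2016even}.

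Next I would plug $p_i = \sqrt{\mM_{ii}}/\sum_j\sqrt{\mM_{jj}}$ and $v_i=\mM_{ii}$ into the key quantity governing the accelerated rate:
\begin{equation*}
\max_{i\in[n]} \frac{v_i}{p_i^2} \;=\; \max_{i}\frac{\mM_{ii}\bigl(\sum_j\sqrt{\mM_{jj}}\bigr)^2}{\mM_{ii}} \;=\; \Bigl(\sum_{j}\sqrt{\mM_{jj}}\Bigr)^{2}.
\end{equation*}
Therefore $\sqrt{\max_i v_i/(p_i^2\mu)} = \sum_j\sqrt{\mM_{jj}}/\sqrt{\mu}$, which is exactly the quantity that the general theorem of Appendix~\ref{sec:acc_thm} shows can be achieved as the inverse of $\tau$ (up to the universal constant). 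Setting $\tau$ of the stated order and choosing $\alpha,\beta$ as prescribed by that theorem yields the claimed contraction $\E{\Lacc^{k}}\le (1-\tau)^k\Lacc^0$ with $\tau = \cO(\sqrt{\mu}/\sum_i\sqrt{\mM_{ii}})$, and the bound $f(y^k)-f(x^*)\le \Lacc^k$ is part of the same Lyapunov construction from the general theorem.

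The main obstacle, and the only part that is not immediate bookkeeping, is verifying that the importance probabilities $p_i\propto \sqrt{\mM_{ii}}$ are simultaneously optimal for the acceleration parameters. The general theorem leaves $p$ free subject to positivity and ESO, and the tightest rate is obtained by minimizing $\max_i v_i/p_i^2$ subject to $\sum_i p_i=1$ (with $v_i=\mM_{ii}$ fixed); a Lagrange-multiplier computation shows the minimizer is $p_i\propto \sqrt{\mM_{ii}}$, which produces the balanced value $\bigl(\sum_j\sqrt{\mM_{jj}}\bigr)^2$ for every $i$. Once this optimality is noted, the corollary follows directly by substitution into the master accelerated theorem, with no further convergence-analysis work needed.
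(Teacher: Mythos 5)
Your proposal is correct and takes essentially the same route as the paper: Corollary~\ref{cor:acc_imp} is proved there in one line by specializing Theorem~\ref{t:imp_acc}, verifying that $v=\diag(\mM)$ satisfies \eqref{eq:ESO} for singleton samplings (your observation that $\Probmat=\diag(p)$ is exactly why), and noting that $\max_i v_i/p_i^2 = \TD(v,p)^2 = \bigl(\sum_j\sqrt{\mM_{jj}}\bigr)^2$ under $p_i\propto\sqrt{\mM_{ii}}$. The one check the paper makes explicit and you elide under ``checking consistency'' is that the auxiliary constant $c_1=\max\bigl(1,\TD(v,p)^{-1}\sqrt{\mu}/\min_i p_i\bigr)$ from \eqref{eq:c4_choice} equals $1$, since Theorem~\ref{t:imp_acc} only gives the rate $(1-c_1^{-1}\tau)^k$; this holds here because $\TD(v,p)\min_i p_i=\min_i\sqrt{\mM_{ii}}\ge\sqrt{\mu}$. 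Your Lagrange-multiplier argument for the optimality of $p_i\propto\sqrt{\mM_{ii}}$ is correct but not needed for the corollary as stated.
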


The iteration complexity guarantees provided by Theorem~\ref{t:imp_acc} and Corollary~\ref{cor:acc_imp} are summarized in Table~\ref{tab:CDcmp}.

 \section{Experiments} \label{sec:experiments}
  In this section we perform numerical experiments to illustrate the potential of \texttt{SEGA}. Firstly, in Section~\ref{sec:exp_pgd}, we compare it  to projected gradient descent (\texttt{PGD}) algorithm. Then in Section~\ref{sec:exp_zero}, we study the performance of zeroth-order \texttt{SEGA} (when sketched gradients are being estimated through function value evaluations) and compare it to the analogous zeroth-order method. Lastly, in Section~\ref{sec:exp_aggressive} we verify the claim from Remark~\ref{rem:aggressive} that in some applications, particular sketches and metric might lead to a significantly faster convergence. In the experiments where theory-supported stepsizes were used, we obtained them by precomputing strong convexity and smoothness measures.
  
\subsection{Comparison to projected gradient\label{sec:exp_pgd}}

In this experiment, we illustrate the potential superiority of our method to \texttt{PGD}. We consider the $\ell_2$ ball constrained problem ($R$ is the indicator function of the unit ball) with the oracle providing the sketched gradient in the random Gaussian direction. As we mentioned in the introduction, a method moving in the gradient direction (analogue of \texttt{CD}), will not converge due to the proximal nature of the problem. Therefore, we can only compare against the projected gradient. However, in order to obtain the full gradient, one needs to gather $n$ sketched gradients and solve a linear system to recover the gradient. To illustrate this, we choose 4 different quadratic problems, according to Table~\ref{tab:problem} in the appendix. We stress that these are synthetic problems generated for the purpose of illustrating the potential of our method against a natural baseline.  Figure~\ref{fig:pgd_comp} compares \texttt{SEGA} and \texttt{PGD} under various relative cost scenarios of solving the linear system compared to the cost of the oracle calls. The results show that \texttt{SEGA}  significantly outperforms \texttt{PGD} as soon as solving the linear system is expensive, and is as fast as \texttt{PGD} even if solving the linear system comes for free.

\begin{figure}
\centering
\begin{minipage}{0.25\textwidth}
  \centering
\includegraphics[width =  \textwidth ]{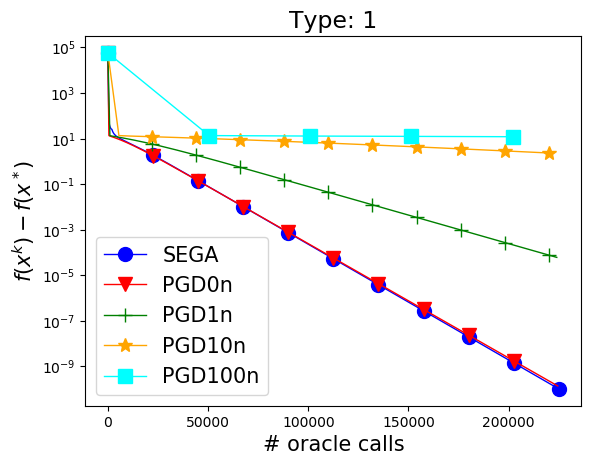}
        %\caption{ Residual vs. iteration  }\label{fig:bl_ex_flops}
\end{minipage}%
\begin{minipage}{0.25\textwidth}
  \centering
\includegraphics[width =  \textwidth ]{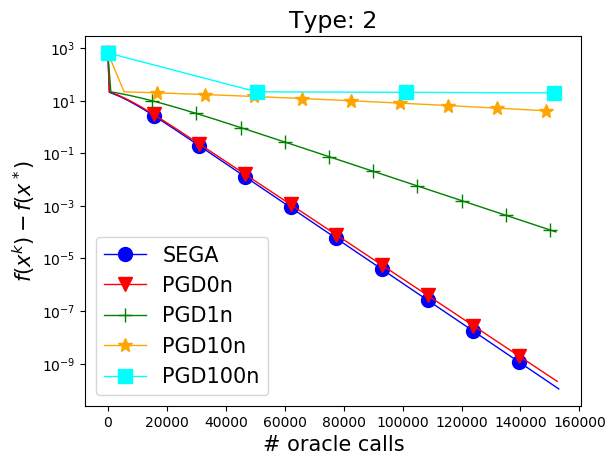}
        %\caption{ Residual vs. iteration  }\label{fig:bl_ex_flops}
\end{minipage}%
\begin{minipage}{0.25\textwidth}
  \centering
\includegraphics[width =  \textwidth ]{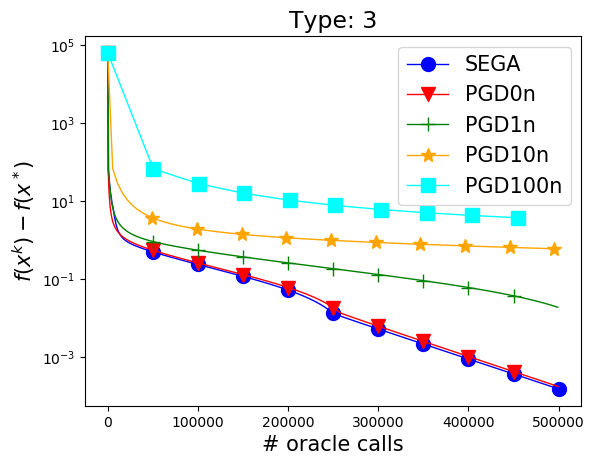}
        %\caption{ Residual vs. iteration  }\label{fig:bl_ex_flops}
\end{minipage}%
\begin{minipage}{0.25\textwidth}
  \centering
\includegraphics[width =  \textwidth ]{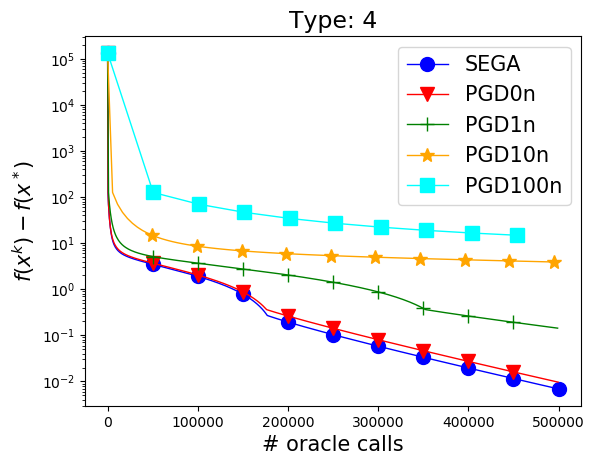}
        %\caption{ Residual vs. iteration  }\label{fig:bl_ex_flops}
\end{minipage}%
\caption{\footnotesize Convergence of \texttt{SEGA} and \texttt{PGD} on synthetic problems with  $n=500$. The indicator ``Xn'' in the label indicates  the setting where the cost of solving linear system is $Xn$ times higher comparing to the oracle call. Recall that a linear system is solved after each $n$ oracle calls. Stepsizes $1/\lambda_{\max}(\mM)$ and $1/(n\lambda_{\max}(\mM))$ were used for \texttt{PGD} and \texttt{SEGA}, respectively. }\label{fig:pgd_comp}
\end{figure}

\subsection{Comparison to zeroth-order optimization methods\label{sec:exp_zero}}
In this section, we compare \texttt{SEGA} to the {\em random direct search} (\texttt{RDS}) method~\cite{RDS} under a zeroth-order oracle for unconstrained optimization. For \texttt{SEGA}, we estimate the sketched gradient using finite differences. Note that \texttt{RDS} is a randomized version of the classical direct search method~\cite{hooke1961direct, kolda2003optimization, konevcny2014simple}. At iteration $k$, \texttt{RDS} moves  to $\argmin \left(f(x^k+\alpha^k s^k),f(x^k-\alpha^k s^k),f(x^k)\right)$ for a random direction $s^k\sim \cD$ and a suitable stepszie $\alpha^k$. For illustration, we choose $f$ to be a quadratic problem based on Table~\ref{tab:problem} and compare both Gaussian and coordinate directions. Figure~\ref{fig:DFO} shows that \texttt{SEGA} outperforms \texttt{RDS}. 

\begin{figure}
\centering
\begin{minipage}{0.25\textwidth}
  \centering
\includegraphics[width =  \textwidth ]{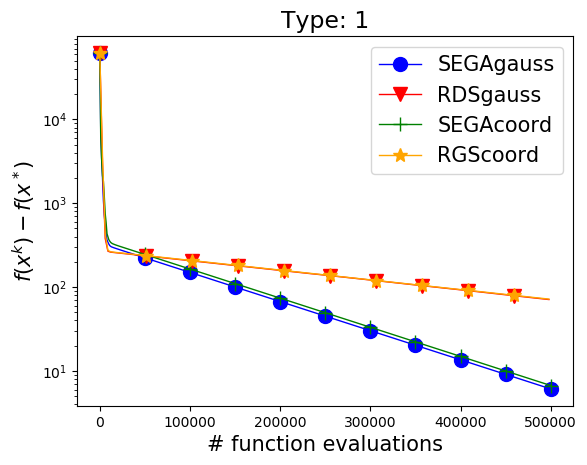}
        %\caption{ Residual vs. iteration  }\label{fig:bl_ex_flops}
\end{minipage}%
\begin{minipage}{0.25\textwidth}
  \centering
\includegraphics[width =  \textwidth ]{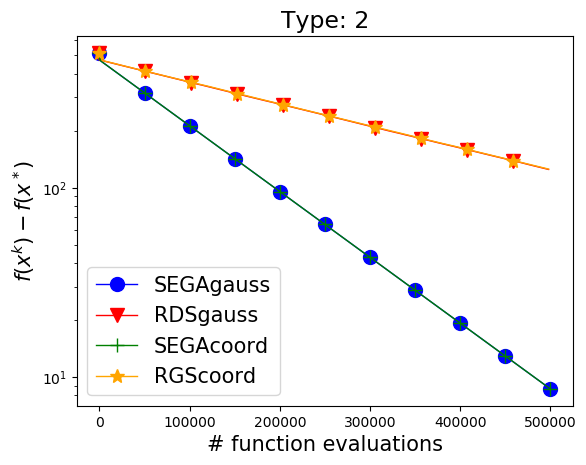}
        %\caption{ Residual vs. iteration  }\label{fig:bl_ex_flops}
\end{minipage}%
\begin{minipage}{0.25\textwidth}
  \centering
\includegraphics[width =  \textwidth ]{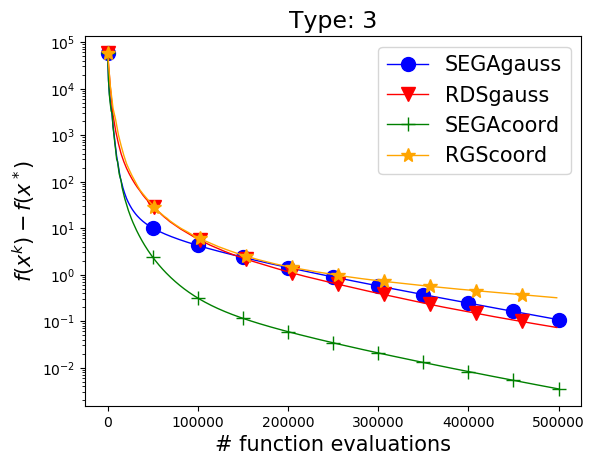}
        %\caption{ Residual vs. iteration  }\label{fig:bl_ex_flops}
\end{minipage}%
\begin{minipage}{0.25\textwidth}
  \centering
\includegraphics[width =  \textwidth ]{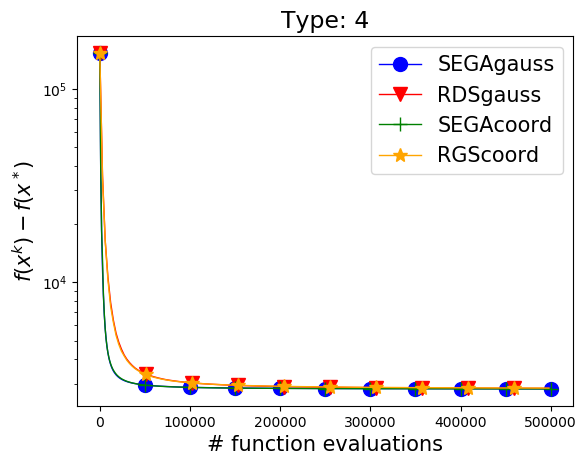}
        %\caption{ Residual vs. iteration  }\label{fig:bl_ex_flops}
\end{minipage}%
\caption{\footnotesize Comparison of \texttt{SEGA} and randomized direct search for various problems. Theory supported stepsizes were chosen for both methods. 500 dimensional problem. }\label{fig:DFO}
\end{figure}

\subsection{Subspace \texttt{SEGA}: a more aggressive approach\label{sec:exp_aggressive}}
As mentioned in Remark~\ref{rem:aggressive}, well designed sketches are capable of exploiting  structure of $f$ and lead to a better rate. We address this in detail Appendix~\ref{sec:subSEGA} where we develop and analyze a subspace variant of \texttt{SEGA}. 

To illustrate this phenomenon in a simple setting, we perform experiments for problem \eqref{eq:main} with $f(x)=\| \mA x -b\|^2,$ where $b\in \R^{d}$ and $\mA\in \R^{d\times n}$ has orthogonal rows, and with $R$ being the indicator function of the unit ball in $\R^n$.  That is, we solve the problem
\[\min_{\|x\|_2 \leq 1} \| \mA x -b\|^2.\]
We assume that $n\gg d$.  We compare two methods: \texttt{naiveSEGA}, which uses coordinate sketches, and \texttt{subspaceSEGA}, where sketches are chosen as rows of $\mA$. Figure~\ref{fig:aggressive} indicates that \texttt{subspaceSEGA} outperforms  \texttt{naiveSEGA} roughly by the factor $\tfrac{n}{d}$, as claimed in Appendix~\ref{sec:subSEGA}.

\begin{figure}
\centering
\begin{minipage}{0.25\textwidth}
  \centering
\includegraphics[width =  \textwidth ]{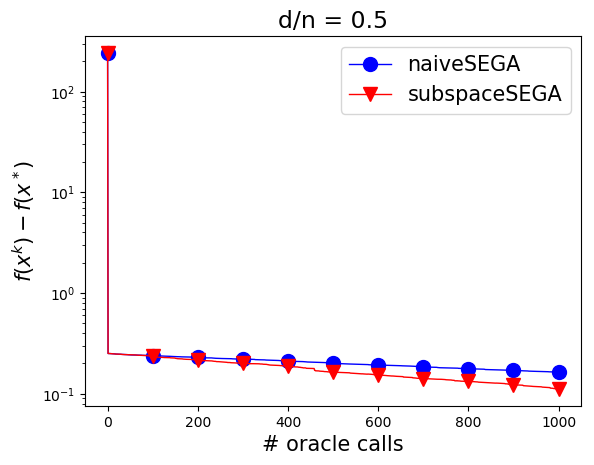}
        %\caption{ Residual vs. iteration  }\label{fig:bl_ex_flops}
\end{minipage}%
\begin{minipage}{0.25\textwidth}
  \centering
\includegraphics[width =  \textwidth ]{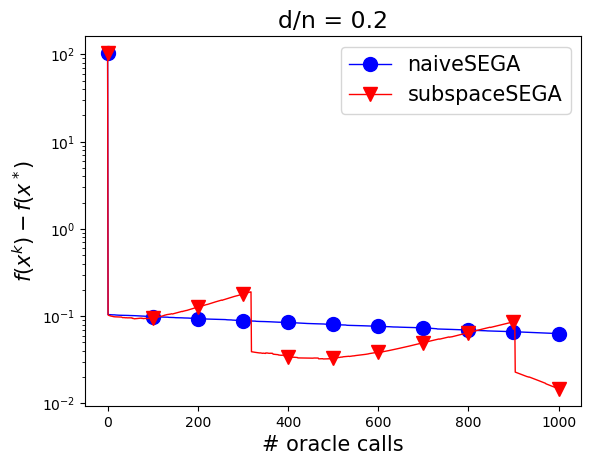}
        %\caption{ Residual vs. iteration  }\label{fig:bl_ex_flops}
\end{minipage}%
\begin{minipage}{0.25\textwidth}
  \centering
\includegraphics[width =  \textwidth ]{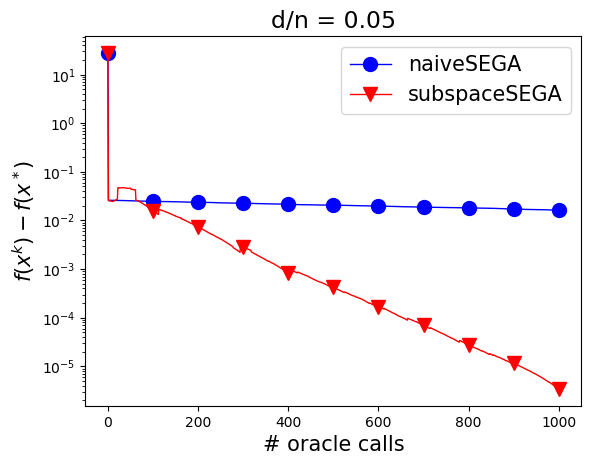}
        %\caption{ Residual vs. iteration  }\label{fig:bl_ex_flops}
\end{minipage}%
\begin{minipage}{0.25\textwidth}
  \centering
\includegraphics[width =  \textwidth ]{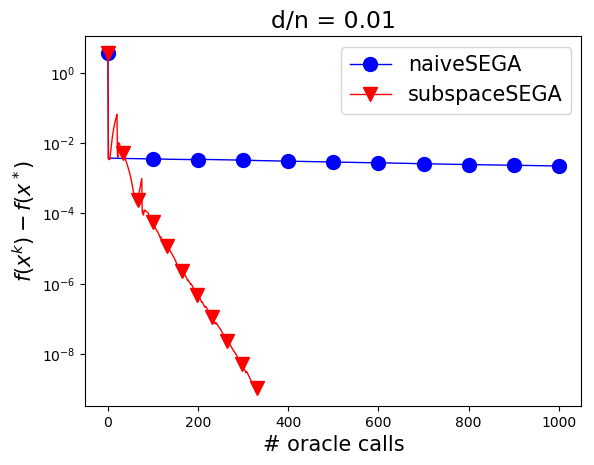}
        %\caption{ Residual vs. iteration  }\label{fig:bl_ex_flops}
\end{minipage}%
\caption{\footnotesize Comparison of \texttt{SEGA} with sketches from a correct subspace versus coordinate sketches \texttt{naiveSEGA}. Stepsize chosen according to theory. 1000 dimensional problem. }\label{fig:aggressive}
\end{figure}

\section{Conclusions and Extensions} \label{sec:conclusion}

\subsection{Conclusions}

We proposed \texttt{SEGA}, a  method for solving composite optimization problems under a novel stochastic linear first order  oracle. \texttt{SEGA} is variance-reduced, and this is achieved via  sketch-and-project updates of gradient estimates. We provided an analysis for smooth and strongly convex functions and general sketches, and  a refined analysis for coordinate sketches. For coordinate sketches we  also proposed an accelerated variant of \texttt{SEGA}, and  our theory matches that of state-of-the-art \texttt{CD} methods. However, in contrast to  \texttt{CD}, \texttt{SEGA} can be used for optimization problems with a {\em non-separable} proximal term. We develop a more aggressive subspace variant of the method---\texttt{subspaceSEGA}---which leads to improvements in the $n\gg d$ regime. In the Appendix we give several further results, including simplified and alternative analyses of \texttt{SEGA}  in the coordinate setup from Example~\ref{ex:coord_setup}.  Our experiments are encouraging and substantiate our theoretical predictions.

\subsection{Extensions}

We now point to several potential  extensions of our work.

\paragraph{Speeding up the general method.}  We believe that it should be possible to extend \texttt{ASEGA} to the general setup from Theorem~\ref{thm:main}. In such a case, it might be possible to design metric $\mB$ and distribution of sketches $\cD$ so as to outperform accelerated proximal gradient methods~\cite{Nesterov05:smooth, beck2009fista}. 

\paragraph{Biased gradient estimator.} Recall that \texttt{SEGA} uses unbiased gradient estimator $g^k$ for updating the iterates $x^k$ in a similar way \texttt{JacSketch}~\cite{gower2018stochastic} or \texttt{SAGA}~\cite{SAGA} do this for the stochastic finite sum optimization. Recently, a  stochastic method for finite sum  optimization using biased gradient estimators was proven to be more efficient~\cite{nguyen2017sarah}. Therefore, it might be possible to establish better properties for a biased variant of \texttt{SEGA}. To demonstrate the potential of this approach, in Appendix~\ref{sec:evolution_extra} we plot the evolution of iterates for the very simple biased method which uses $h^k$ as an update for line~\ref{eq:x_update} in Algorithm~\ref{alg:gs}.

\paragraph{Applications.} We believe that \texttt{SEGA} might work well in applications where a zeroth-order approach is inevitable, such as reinforcement learning. We therefore believe that \texttt{SEGA} might be an efficient proximal method in some reinforcement learning applications. We also believe that communication-efficient variants of \texttt{SEGA} can be used for  distributed training of machine learning models. This is because \texttt{SEGA} can be adapted to communicate sparse model updates only.

 \bibliographystyle{plain} 
 \bibliography{literature}
 
\newpage
\appendix

\part*{Appendix}

\section{Proofs for Section~\ref{sec:analysis}}
\begin{lemma}\label{lem:relate}
Suppose that $\mB=\mI$ and $f$ is twice differentiable. 
Assumption~\ref{ass:M_smooth_inv} is equivalent to Assumption~\ref{ass:M_smooth} for $\mmM=\mM^{-1}$. 
\end{lemma}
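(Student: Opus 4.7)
The plan is to prove each implication separately.

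For the direction Assumption~\ref{ass:M_smooth} $\Rightarrow$ Assumption~\ref{ass:M_smooth_inv} (with $\mB=\mI$, $\mmM=\mM^{-1}$), I would fix $y$ and introduce the auxiliary function $\phi(z)\eqdef f(z)-f(y)-\langle\nabla f(y),z-y\rangle$. Inheriting convexity from the paper's strong convexity assumption on $f$ and $\mM$-smoothness from Assumption~\ref{ass:M_smooth}, the function $\phi$ is itself convex and $\mM$-smooth. Since $\nabla\phi(y)=0$, the point $y$ is a global minimizer with $\phi(y)=0$, so $\phi\ge0$ everywhere. Applying the $\mM$-quadratic upper bound at any point $x$ to the gradient step $z_*=x-\mM^{-1}\nabla\phi(x)$ gives $0=\phi(y)\le\phi(z_*)\le\phi(x)-\tfrac{1}{2}\|\nabla\phi(x)\|_{\mM^{-1}}^2$. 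Substituting $\nabla\phi(x)=\nabla f(x)-\nabla f(y)$ and rearranging yields precisely Assumption~\ref{ass:M_smooth_inv}.

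For the reverse direction, I would symmetrize by swapping $x$ and $y$ in the stated inequality and adding, which produces the co-coercivity
\[
\langle\nabla f(x)-\nabla f(y),x-y\rangle\;\ge\;\|\nabla f(x)-\nabla f(y)\|_{\mM^{-1}}^2.
\]
The generalized Cauchy--Schwarz in the dual-norm pair $(\mM,\mM^{-1})$ bounds the left-hand side by $\|\nabla f(x)-\nabla f(y)\|_{\mM^{-1}}\|x-y\|_\mM$, whence
$\|\nabla f(x)-\nabla f(y)\|_{\mM^{-1}}\le\|x-y\|_\mM$. To recover Assumption~\ref{ass:M_smooth}, I would integrate along the segment $y+t(x-y)$:
\[
f(x)-f(y)-\langle\nabla f(y),x-y\rangle=\int_0^1\langle\nabla f(y+t(x-y))-\nabla f(y),x-y\rangle\,dt,
\]
and apply the just-derived Lipschitz bound together with Cauchy--Schwarz inside the integral to get $\int_0^1 t\|x-y\|_\mM^2\,dt=\tfrac{1}{2}\|x-y\|_\mM^2$.

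The only subtle point is the role of twice differentiability. The forward direction requires convexity of $\phi$, which is free given the paper's strong convexity of $f$; the reverse direction only uses $C^1$-smoothness for the integral to be well defined. Twice differentiability is mentioned presumably because an alternative (equally clean) proof proceeds pointwise through the Hessian: $\mM$-smoothness is equivalent to $\nabla^2 f(y)\preceq\mM$ for all $y$, while expanding Assumption~\ref{ass:M_smooth_inv} to second order around $y$ with $\mB=\mI$, $\mmM=\mM^{-1}$ gives $\nabla^2 f(y)\succeq\nabla^2 f(y)\mM^{-1}\nabla^2 f(y)$; a congruence transformation by $\mM^{-1/2}$ turns this into $\mA\succeq\mA^2$ for $\mA=\mM^{-1/2}\nabla^2 f(y)\mM^{-1/2}$, equivalent to $0\preceq\mA\preceq\mI$, i.e., $\nabla^2 f(y)\preceq\mM$. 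I would expect the anticipated obstacle to be simply handling the dual-norm manipulation carefully and, in the Hessian-based variant, correctly factoring the positive-semidefinite inequality $\mH\succeq\mH\mM^{-1}\mH$ without assuming $\mH\succ 0$ (which can be dealt with by a limiting argument or, in this paper, by invoking strong convexity to make $\mH\succeq\mu\mI\succ 0$).
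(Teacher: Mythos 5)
Your proof is correct, and for one half it follows the paper almost verbatim; for the other half it finishes by a genuinely different and arguably cleaner route. In the direction Assumption~\ref{ass:M_smooth} $\Rightarrow$ Assumption~\ref{ass:M_smooth_inv}, your argument via the nonnegative auxiliary function $\phi$ and the gradient step $x-\mM^{-1}\nabla\phi(x)$ is exactly the paper's argument (the paper fixes $x$ and uses $\phi(y)=f(y)-\langle\nabla f(x),y\rangle$, but this is the same computation with roles swapped; both need convexity of $f$ to assert that the stationary point is a global minimizer, which the paper's standing strong convexity assumption supplies). In the reverse direction you and the paper both start identically: symmetrize to get the co-coercivity bound and apply the generalized Cauchy--Schwarz inequality to obtain
\[
\|\nabla f(x)-\nabla f(y)\|_{\mM^{-1}}\le\|x-y\|_{\mM}.
\]
From there the paper invokes the mean value theorem to write $\nabla f(x)-\nabla f(y)=\nabla^2 f(z)(x-y)$ and then performs a congruence argument on Hessians to conclude $\nabla^2 f(z)\preceq\mM$, whereas you integrate $\langle\nabla f(y+t(x-y))-\nabla f(y),x-y\rangle$ over $t\in[0,1]$ and bound the integrand by $t\|x-y\|_{\mM}^2$. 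Your finish buys two things: it only needs $C^1$ regularity rather than twice differentiability, and it sidesteps the fact that the mean value theorem does not literally hold for vector-valued maps such as $\nabla f$ in the single-point form the paper uses (that step in the paper is better read as a pointwise Hessian argument, close to the alternative you sketch at the end). Your remark on extracting $\mA\preceq\mI$ from $\mA\succeq\mA^2$ after congruence by $\mM^{-1/2}$ is also the right way to make the paper's matrix step rigorous.
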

\begin{proof}
We first establish that Assumption~\ref{ass:M_smooth_inv} implies Assumption~\ref{ass:M_smooth}. 
Summing up~\eqref{eq:M_smooth_inv} for $(x,y)$ and $(y,x)$ yields
\[
 \langle  \nabla f(x) - \nabla f(y) ,x - y \rangle \geq \|\nabla f(x) - \nabla f(y)\|_{\mmM}^2. 
\]
Using Cauchy Schwartz inequality we obtain
\[
 \| x-y\|_{\mQ^{-1}} \geq \|\nabla f(x) - \nabla f(y)\|_{\mmM}. 
\]
By the mean value theorem, there is $z \in [x,y]$ such that $\nabla f(x) - \nabla f(y) = \nabla^2 f(z) (x-y)$. Thus
\[
 \| x-y\|_{\mQ^{-1}} \geq \|x-y\|_{\nabla^2 f(z) \mmM \nabla^2 f(z) }. 
\]
The above is equivalent to
\[
\left(\nabla^2 f(z)\right)^{-\frac12}\mQ^{-1}\left(\nabla^2 f(z)\right)^{-\frac12} \succeq 
\left(\nabla^2 f(z)\right)^{\frac12} \mQ\left(\nabla^2 f(z)\right)^{\frac12}
\]
Note that for any $\mM'\succ 0$ we have $\mM' \succeq \mM^{-1}$ if and only if $\mM \succeq \mI$. Thus 
\[
\left(\nabla^2 f(z)\right)^{-\frac12}\mQ^{-1}\left(\nabla^2 f(z)\right)^{-\frac12}\succeq \mI,
\]
which is equivalent to $\mQ^{-1} \succeq \nabla^2 f(z)$.
To establish the other direction, denote $\phi(y)=f(y)-\langle \nabla f(x),y\rangle$. Clearly, $x$ is minimizer of $\phi$ and therefore we have
\[
\phi(x)\leq \phi(x-\mM^{-1} \nabla f(y)) \leq \phi(y)-\frac12 \|\nabla f(y) \|^2_{\mM^{-1}}, 
\]
which is exactly~\eqref{eq:M_smooth_inv} for $\mmM=\mM^{-1}$.  
\qed
\end{proof}

\begin{lemma}
\label{lem:zkbzk}
    For $\mB\succ 0$ and $\mZ_k \eqdef  \mS_k (\mS_k^\top \mB^{-1} \mS_k)^\dagger \mS_k^\top$, then
    \begin{align}
    \label{eq:zkbzk}
        \mZ_k^\top \mB^{-1} \mZ_k = \mZ_k.
    \end{align}
\end{lemma}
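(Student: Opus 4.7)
The plan is to unfold the definition of $\mZ_k$ and exploit the standard identity $\mA^\dagger \mA \mA^\dagger = \mA^\dagger$ that characterizes the Moore-Penrose pseudoinverse. The key observation is that $\mZ_k$ is symmetric: since $\mS_k^\top \mB^{-1} \mS_k$ is symmetric, so is its pseudoinverse, and therefore $\mZ_k^\top = \mS_k \bigl((\mS_k^\top \mB^{-1} \mS_k)^\dagger\bigr)^\top \mS_k^\top = \mS_k (\mS_k^\top \mB^{-1} \mS_k)^\dagger \mS_k^\top = \mZ_k$.

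With this in hand, I would abbreviate $\mA \eqdef \mS_k^\top \mB^{-1} \mS_k$ so that $\mZ_k = \mS_k \mA^\dagger \mS_k^\top$ and write out
\[
\mZ_k^\top \mB^{-1} \mZ_k \;=\; \mZ_k \mB^{-1} \mZ_k \;=\; \mS_k \mA^\dagger \bigl(\mS_k^\top \mB^{-1} \mS_k\bigr) \mA^\dagger \mS_k^\top \;=\; \mS_k \mA^\dagger \mA \mA^\dagger \mS_k^\top.
\]
Then invoking the pseudoinverse identity $\mA^\dagger \mA \mA^\dagger = \mA^\dagger$ collapses the middle factors, leaving exactly $\mS_k \mA^\dagger \mS_k^\top = \mZ_k$, which is what we want.

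There is really no obstacle here: the result is a direct algebraic consequence of the definition of $\mZ_k$ together with one of the four Moore-Penrose axioms. The only mild subtlety worth explicitly noting is symmetry of $\mA^\dagger$, which follows because symmetry of $\mA$ implies that $\mA^\top = \mA$ satisfies all four Moore-Penrose conditions with respect to $(\mA^\dagger)^\top$, so by uniqueness $(\mA^\dagger)^\top = \mA^\dagger$. After that, the calculation is a two-line chain of equalities.
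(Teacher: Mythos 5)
Your proof is correct and follows essentially the same route as the paper: establish $\mZ_k^\top = \mZ_k$, then collapse $\mS_k \mA^\dagger (\mS_k^\top \mB^{-1}\mS_k)\mA^\dagger \mS_k^\top$ via the Moore--Penrose identity $\mA^\dagger \mA \mA^\dagger = \mA^\dagger$. The only cosmetic difference is that you justify symmetry of $\mZ_k$ through symmetry of $\mS_k^\top\mB^{-1}\mS_k$ and uniqueness of the pseudoinverse, whereas the paper cites $((\mA\mB)^\dagger)^\top = (\mB^\top\mA^\top)^\dagger$; both are valid.
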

\begin{proof}
	It is a property of pseudo-inverse that for any matrices $\mA, \mB$ it holds $((\mA\mB)^\dagger)^\top = (\mB^\top\mA^\top)^\dagger$, so $\mZ_k^\top = \mZ_k$. Moreover, we also know for any $\mA$ that $\mA^\dagger \mA \mA^\dagger = \mA^\dagger$ and, thus,
\[
        \mZ_k^\top \mB^{-1}\mZ_k 
        =  \mS_k (\mS_k^\top \mB^{-1} \mS_k)^\dagger \mS_k^\top \mB^{-1}  \mS_k(\mS_k^\top \mB^{-1} \mS_k)^\dagger \mS_k^\top=  \mS_k (\mS_k^\top \mB^{-1} \mS_k)^\dagger \mS_k^\top= \mZ_k.
\]\qed
\end{proof}

\subsection{Proof of Theorem~\ref{thm:main}}
We first state two lemmas which will be crucial for the analysis. They characterize key properties of the gradient learning process~\eqref{eq:h^{k+1}},~\eqref{eq:g^k} and will be used later to bound expected distances of both $h^{k+1}$ and $g^k$ from $\nabla f(x^*)$. The proofs are provided in Appendix~\ref{sec:prl1} and~\ref{sec:prl2} respectively

\begin{lemma} \label{lem:2} For all $v\in \R^n$ we have
\begin{equation}\label{eq:h_decomp}
\ED{\|h^{k+1} - v\|_{\mB}^2} =  \|h^k - v\|_{\mB - \ED{\mZ}}^2 + \|\nabla f(x^k) - v\|_{\ED{\mZ}}^2.
\end{equation}
\end{lemma}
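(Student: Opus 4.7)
The plan is to prove the identity pointwise in $\mS_k$ first and then take expectation over $\cD$. Specifically, I will show that
\[
\|h^{k+1}-v\|_{\mB}^2 \;=\; \|h^k-v\|_{\mB-\mZ_k}^2 + \|\nabla f(x^k)-v\|_{\mZ_k}^2
\]
for every realization of $\mS_k$, and then linearity of expectation gives the claim.

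First I would introduce the shorthand $a \eqdef h^k-v$ and $b \eqdef \nabla f(x^k)-v$, so that $h^k-\nabla f(x^k)=a-b$. Using formula~\eqref{eq:h^{k+1}} for $h^{k+1}$, I rewrite
\[
h^{k+1}-v \;=\; (\mI-\mB^{-1}\mZ_k)\,a + \mB^{-1}\mZ_k\, b.
\]
Applying $\mB$ from the left gives $\mB(h^{k+1}-v) = (\mB-\mZ_k)a + \mZ_k b$, so expanding $\langle \mB(h^{k+1}-v), h^{k+1}-v\rangle$ produces four terms.

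The key technical ingredient is Lemma~\ref{lem:zkbzk}, which states $\mZ_k\mB^{-1}\mZ_k=\mZ_k$ together with $\mZ_k^\top=\mZ_k$. Using these identities, the ``diagonal'' $aa$-term simplifies to $a^\top(\mB-\mZ_k)a = \|a\|_{\mB-\mZ_k}^2$, since $a^\top\mZ_k\mB^{-1}\mZ_k a = a^\top\mZ_k a$ cancels the mixed $-a^\top\mZ_k a$ contribution. The $bb$-term becomes $b^\top \mZ_k \mB^{-1}\mZ_k b = b^\top \mZ_k b = \|b\|_{\mZ_k}^2$. Both cross terms vanish for the same reason: e.g., $a^\top(\mB-\mZ_k)\mB^{-1}\mZ_k b = a^\top\mZ_k b - a^\top\mZ_k\mB^{-1}\mZ_k b = 0$, and similarly for the other cross term.

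Combining these computations gives the deterministic identity displayed above; taking $\ED{\cdot}$ and using linearity (the matrix $\mB-\mZ_k$ depends linearly on $\mZ_k$) yields~\eqref{eq:h_decomp}. The main ``obstacle'' is really just bookkeeping: making sure the cancellations from Lemma~\ref{lem:zkbzk} are applied correctly to both the diagonal and off-diagonal terms, which is why I prefer to expand in the compact $a,b$ notation before substituting back.
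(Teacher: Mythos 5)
Your proposal is correct and follows essentially the same route as the paper's proof: the same decomposition $h^{k+1}-v=(\mI-\mB^{-1}\mZ_k)(h^k-v)+\mB^{-1}\mZ_k(\nabla f(x^k)-v)$, the same expansion into two diagonal terms and a cross term, and the same use of Lemma~\ref{lem:zkbzk} to get $\mZ_k\mB^{-1}\mZ_k=\mZ_k$ and kill the cross term. The only (cosmetic) difference is that you establish the identity pointwise in $\mS_k$ before taking expectation, whereas the paper takes $\ED{\cdot}$ first and then simplifies; both orderings rely on exactly the same cancellations.
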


\begin{lemma}
\label{lem:gk_general}
        Let $\mC\eqdef \ED{\theta^2\mZ}$. Then for all $v\in \R^n$ we have
\[
        \ED{\|g^k - v\|_\mB^2} \le 2\|\nabla f(x^k) - v\|^2_\mC + 2\|h^k - v\|^2_{\mC-\mB}.
\]
\end{lemma}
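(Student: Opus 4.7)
The plan is to express $g^k - v$ as a sum of two terms, one involving $\nabla f(x^k) - v$ and the other involving $h^k - v$, and then apply the elementary inequality $\|a+b\|_\mB^2 \le 2\|a\|_\mB^2 + 2\|b\|_\mB^2$ to split the bound on $\ED{\|g^k - v\|_\mB^2}$ into two manageable pieces. Starting from the representation of $g^k$ in \eqref{eq:g^k}, I would add and subtract $v$ inside the bracket $\nabla f(x^k) - h^k$ to rearrange as
\[
g^k - v = (\mI - \theta_k \mB^{-1}\mZ_k)(h^k - v) + \theta_k \mB^{-1}\mZ_k (\nabla f(x^k) - v).
\]

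For the second summand on the right, Lemma~\ref{lem:zkbzk} (which says $\mZ_k^\top \mB^{-1}\mZ_k = \mZ_k$) gives
\[
\|\theta_k \mB^{-1}\mZ_k(\nabla f(x^k) - v)\|_\mB^2 = \theta_k^2 (\nabla f(x^k) - v)^\top \mZ_k (\nabla f(x^k) - v),
\]
so taking expectation over $\mS_k \sim \cD$ and using the definition $\mC = \ED{\theta_k^2 \mZ_k}$ produces the term $\|\nabla f(x^k) - v\|_\mC^2$, which after the factor $2$ from Young's inequality matches the first term on the right-hand side of the claim.

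For the first summand, set $w \eqdef h^k - v$ and expand the $\mB$-norm of $w - \theta_k \mB^{-1}\mZ_k w$; using $\mZ_k^\top = \mZ_k$ and Lemma~\ref{lem:zkbzk} one more time, the cross term contributes $-2\theta_k w^\top \mZ_k w$ and the quadratic term contributes $\theta_k^2 w^\top \mZ_k w$, so
\[
\|(\mI - \theta_k \mB^{-1}\mZ_k) w\|_\mB^2 = \|w\|_\mB^2 + (\theta_k^2 - 2\theta_k)\|w\|_{\mZ_k}^2.
\]
Taking expectation and combining the bias-correction identity $\ED{\theta_k \mZ_k} = \mB$ from \eqref{eq:unbiased} with the definition of $\mC$ collapses this to $\|w\|_\mB^2 + \|w\|_\mC^2 - 2\|w\|_\mB^2 = \|w\|_{\mC - \mB}^2$, which after the factor $2$ gives the second term in the claim.

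Since every step is a direct linear-algebra computation and all the tools (the decomposition, Lemma~\ref{lem:zkbzk}, and \eqref{eq:unbiased}) are already available, there is no serious obstacle; the only step that requires thought is the initial additive split of $g^k - v$, because a careless expansion that keeps $\nabla f(x^k) - h^k$ as one block would produce a bound involving that difference, rather than the two quantities $\nabla f(x^k) - v$ and $h^k - v$ appearing on the right-hand side of the lemma.
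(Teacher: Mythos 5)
Your proposal is correct and follows essentially the same route as the paper's proof: the same splitting $g^k - v = (\mI - \theta_k\mB^{-1}\mZ_k)(h^k-v) + \theta_k\mB^{-1}\mZ_k(\nabla f(x^k)-v)$, the same use of $\|a+b\|_\mB^2 \le 2\|a\|_\mB^2+2\|b\|_\mB^2$, and the same evaluation of the two expectations via Lemma~\ref{lem:zkbzk} and the identity $\ED{\theta_k\mZ_k}=\mB$. No gaps.
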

For notational simplicity, it will be convenient to define Bregman divergence between $x$ and $y$: \[D_f(x,y)\eqdef f(x) - f(y) - \langle \nabla f(y)),x - y \rangle_{\mB }\]
We can now proceed with the proof of Theorem~\ref{thm:main}. 
    Let us start with bounding the first term in the expression for $\Lgen^{k+1}$. From Lemma~\ref{lem:gk_general} and strong convexity it follows that
    \begin{eqnarray*}
        \ED{\|x^{k+1} - x^*\|^2_\mB}
        &=& \ED{\|\prox(x^k - \alpha g^k) - \prox(x^* - \alpha \nabla f(x^*))\|^2_\mB}\\
        &\le& \ED{\|x^k - \alpha g^k - (x^* - \alpha \nabla f(x^*))\|^2_\mB}\\ 
        &=& \|x^k - x^*\|^2_\mB - 2\alpha\ED{ (g^k - \nabla f(x^*))^\top \mB (x^k - x^*)}\\
        &&\qquad + \alpha^2\ED{\|g^k - \nabla f(x^*)\|^2_\mB}\\
        &\le & \|x^k - x^*\|^2_\mB - 2\alpha (\nabla f(x^k) - \nabla f(x^*))^\top \mB (x^k - x^*) \\
        &&\qquad + 2\alpha^2\|\nabla f(x^k) - \nabla f(x^*)\|^2_\mC + 2\alpha^2\|h^k - \nabla f(x^*)\|^2_{\mC-\mB}\\
        &\le &\|x^k - x^*\|^2_\mB - \alpha\mu\|x^k - x^*\|^2_\mB  - 2\alpha D_f(x^k, x^*) \\
        &&\qquad + 2\alpha^2\|\nabla f(x^k) - \nabla f(x^*)\|^2_\mC + 2\alpha^2\|h^k - \nabla f(x^*)\|^2_{\mC-\mB}.
    \end{eqnarray*}
    Using Assumption~\ref{ass:M_smooth_inv} we get
    \begin{eqnarray*}
        -2\alpha D_f(x^k, x^*) \le -\alpha \|\nabla f(x^k) - \nabla f(x^*)\|^2_\mmM.
    \end{eqnarray*}
    As for the second term in $\Lgen^{k+1}$, we have by Lemma~\ref{lem:2}
    \begin{eqnarray*}
        \alpha\sigma\ED{\|h^{k+1} - \nabla f(x^*)\|^2_\mB} = \alpha\sigma\|h^k - \nabla f(x^*)\|^2_{\mB - \ED{\mZ}}  + \alpha\sigma\|\nabla f(x^k) - \nabla f(x^*)\|^2_{\ED{\mZ}}
    \end{eqnarray*}
    Combining it into Lyapunov function $\Lgen^k$,
    \begin{eqnarray*}
        \Lgen^{k+1} 
        &\le & (1  - \alpha\mu)\|x^k - x^*\|^2_\mB + \alpha\sigma\|h^k - \nabla f(x^*)\|^2_{\mB - \ED{\mZ}} + 2\alpha^2\|h^k - \nabla f(x^*)\|^2_{\mC-\mB} \\
        &&  + \alpha\sigma\|\nabla f(x^k) - \nabla f(x^*)\|^2_{\ED{\mZ}} + 2\alpha^2\|\nabla f(x^k) - \nabla f(x^*)\|^2_\mC -\alpha \|\nabla f(x^k) - \nabla f(x^*)\|^2_\mmM.
    \end{eqnarray*}
    To see that this gives us the theorem's statement, consider first
    \begin{eqnarray*}
        \alpha\sigma \ED{\mZ} +2\alpha^2 \mC - \alpha\mmM 
        = 2\alpha (\alpha\mC - \tfrac{1}{2}(\mmM - \sigma \ED{\mZ}))
        \le 0,
    \end{eqnarray*}
    so we can drop norms related to $\nabla f(x^k) - \nabla f(x^*)$. Next, we have
    \begin{eqnarray*}
        \alpha\sigma (\mB - \ED{\mZ}) + 2\alpha^2(\mC - \mB) 
        &=& \alpha\left( \alpha(2(\mC - \mB) + \sigma\mu\mB) - \ED{\mZ}\right) + \sigma \alpha(1 - \alpha\mu)\mB \\
        &\le& \sigma\alpha (1 - \alpha\mu)\mB,
    \end{eqnarray*}
    which follows from our assumption on $\alpha$.\hfill \qed

\subsection{Proof of Lemma~\ref{lem:2} \label{sec:prl1}}

\begin{proof}
Keeping in mind that $\mZ_k^\top = \mZ_k$ and $(\mB^{-1})^\top = \mB^{-1}$, we first write
\begin{eqnarray*}
    \ED{\| h^{k+1}-v\|_\mB^2} 
    &\overset{\eqref{eq:988fgf}}{=}& \ED{ \left\|h^k + \mB^{-1}\mZ_k (\nabla f(x^k) - h^k) - v \right\|_\mB^2 }\\
 &=&  \ED{ \left\| \left(\mI - \mB^{-1}\mZ_k \right)(h^k - v) + \mB^{-1}\mZ_k (\nabla f(x^k)-v)  \right\|_\mB^2 }\\
 &=& \ED{ \left\| \left(\mI - \mB^{-1}\mZ_k \right)(h^k-v) \right\|_\mB^2} + \ED{\left\|\mB^{-1}\mZ_k  ( \nabla f(x^k) - v)  \right\|_\mB^2 }\\
 && \qquad + 2 (h^k-v)^\top\ED{\left(\mI - \mB^{-1}\mZ_k \right)^\top \mB \mB^{-1}\mZ_k}  (\nabla f(x^k) - v) \\
 &=& (h^k-v)^\top \ED{ \left(\mI - \mB^{-1}\mZ_k \right)^\top \mB  \left(\mI - \mB^{-1}\mZ_k \right)} (h^k-v) \\
 && \qquad + (\nabla f(x^k)-v)^\top \ED{\mZ_k\mB^{-1}\mB\mB^{-1}\mZ_k} (\nabla f(x^k) -v)\\
 && \qquad + 2 (h^k-v)^\top\ED{\mZ_k - \mZ_k\mB^{-1}\mZ_k}  (\nabla f(x^k) - v).
 \end{eqnarray*}
By Lemma \ref{lem:zkbzk} we have $\mZ_k\mB^{-1}\mZ_k=\mZ_k$, so the last term in the expression above is equal to 0. As for the other two, expanding the matrix factor in the first term leads to
\begin{eqnarray*}
     \ED{ \left(\mI - \mB^{-1}\mZ_k \right)^\top \mB  \left(\mI - \mB^{-1}\mZ_k \right)}
     &=& \ED{ \left(\mI - \mZ_k\mB^{-1} \right) \mB  \left(\mI - \mB^{-1}\mZ_k \right)}\\
     &=&  \ED{ \mB - \mZ_k\mB^{-1}\mB - \mB\mB^{-1}\mZ_k + \mZ_k \mB^{-1}\mB \mB^{-1}\mZ_k}\\
     &=& \mB - \ED{\mZ_k}.
\end{eqnarray*}
We, thereby, have derived
 \begin{eqnarray*}
 \ED{\| h^{k+1}-v\|_\mB^2} 
 &=&(h^k-v)^\top \left(\mB - \ED{\mZ_k}\right) (h^k - v) \\
 && \quad + (\nabla f(x^k)-v)^\top \ED{\mZ_k\mB^{-1}\mZ_k} (\nabla f(x^k) - v)\\
 &=& \|h^k - v\|_{\mB - \ED{\mZ}}^2 + \|\nabla f(x^k) - v\|_{\ED{\mZ}}^2.
\end{eqnarray*}
\hfill \qed
\end{proof}

\subsection{Proof of Lemma~\ref{lem:gk_general}\label{sec:prl2}}
\begin{proof}
    Throughout this proof, we will use without any mention that $\mZ_k^\top = \mZ_k$.
    
    Writing $g^k - v = a + b$, where $a\eqdef (\mI - \theta_k\mB^{-1}\mZ_k)(h^k - v)$ and $b\eqdef \theta_k \mB^{-1}\mZ_k (\nabla f(x^k) - v)$, we get $\|g^k\|_\mB^2\le 2(\|a\|_\mB^2 + \|b\|_\mB^2)$. Using Lemma~\ref{lem:zkbzk} and the definition of $\theta_k$ yields
    \begin{eqnarray*}
        \ED{\|a\|_\mB^2} 
        &=& \ED{\|\left(\mI - \theta_k\mB^{-1}\mZ_k\right)(h^k - v)\|_\mB^2}\\
        &=& (h^k - v)^\top \ED{\left( \mI - \theta_k\mZ_k\mB^{-1}\right)\mB\left( \mI - \theta_k\mB^{-1}\mZ_k\right)} (h^k - v)\\
        &=&  (h^k - v)^\top \ED{\left( \mB - \theta_k\mZ_k\mB^{-1}\mB - \mB\theta_k\mB^{-1}\mZ_k + \theta_k^2\mZ_k\mB^{-1}\mB\mB^{-1}\mZ_k \right)} (h^k - v)\\
        &=&(h^k - v)^\top \ED{\left( \mB - 2\mB + \theta_k^2\mZ_k\right)} (h^k - v)\\
        &=& \|h^k - v\|^2_{\ED{\theta^2\mZ} - \mB}.
    \end{eqnarray*}
    Similarly, the second term in the upper bound on $g^k$ can be rewritten as
    \begin{eqnarray*}
        \ED{\|b\|_\mB^2}
        &=& \ED{\|\theta_k \mB^{-1}\mZ_k (\nabla f(x^k) - v)\|^2_\mB}\\
        &=& (\nabla f(x^k) - v)^\top \ED{\theta_k^2\mZ_k \mB^{-1}\mB\mB^{-1}\mZ_k} (\nabla f(x^k) - v)\\
        &=& \|\nabla f(x^k) - v\|^2_{\mC}.
    \end{eqnarray*}
    Combining the pieces, we get the claim. \hfill \qed
\end{proof}

\section{Proofs for Section~\ref{sec:CD} \label{sec:proofs_CD}}
\subsection{Technical Lemmas\label{sec:techlem}}
We first start with an analogue of Lemma~\ref{lem:gk_general} allowing for a norm different from $\|\cdot\|_\mB$. We remark that 
matrix $\mQ'$ in the lemma is  not to be confused with the smoothness matrix $\mQ$ from Assumption~\ref{ass:M_smooth_inv}.

\begin{lemma} \label{lem:1xxxx} Let $\mQ'\succ 0$. The variance of $g^k$ as an estimator of $\nabla f(x^k)$ can be bounded as follows:
\begin{equation} \label{eq:almost_eso_g}
\frac12\ED{\|g^k\|_{\mQ'}^2} \leq  \|h^k \|_{\mPdiag^{-1} (\Probmat \circ \mQ')\mPdiag^{-1}- \mQ'}^2 +  \|\nabla f(x^k)\|^2_{\mPdiag^{-1} (\Probmat \circ \mQ')\mPdiag^{-1}}.
\end{equation}
\end{lemma}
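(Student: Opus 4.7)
The plan is to split $g^k$ into a part that depends only on $h^k$ and a part that depends only on $\nabla f(x^k)$, apply a Young-type inequality, and then evaluate each of the two resulting expectations using the coordinate-sketch structure. Concretely, writing the $\mPdiag^{-1}$-bias-corrected update (specialized to $\mS_k = \mI_{S_k}$) in the form
\[
g^k \;=\; \bigl(\mI - \mPdiag^{-1}\mZ_k\bigr)\, h^k \;+\; \mPdiag^{-1}\mZ_k\, \nabla f(x^k),
\]
the inequality $\|a+b\|_{\mQ'}^2\le 2\|a\|_{\mQ'}^2+2\|b\|_{\mQ'}^2$ yields
\[
\|g^k\|_{\mQ'}^2 \;\le\; 2\bigl\|(\mI - \mPdiag^{-1}\mZ_k)h^k\bigr\|_{\mQ'}^2 \;+\; 2\bigl\|\mPdiag^{-1}\mZ_k\, \nabla f(x^k)\bigr\|_{\mQ'}^2.
\]
Dividing by $2$ after taking expectation will give exactly the claimed bound, provided the two expected quadratic forms in $h^k$ and $\nabla f(x^k)$ evaluate to $\mPdiag^{-1}(\Probmat\circ \mQ')\mPdiag^{-1}-\mQ'$ and $\mPdiag^{-1}(\Probmat\circ \mQ')\mPdiag^{-1}$ respectively.

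The core identity I would prove once and reuse is that for $\mS_k=\mI_{S_k}$ the matrix $\mZ_k$ is the diagonal indicator $\sum_{i\in S_k} e_ie_i^\top$, which gives $\ED{\mZ_k}=\mPdiag$ and, for every (deterministic) matrix $\mA$,
\[
\ED{\mZ_k\, \mA\, \mZ_k} \;=\; \Probmat \circ \mA,
\]
simply because $(\mZ_k \mA \mZ_k)_{ij}=\mA_{ij}\,\mathbf{1}_{\{i,j\in S_k\}}$ and $\Prob{i,j\in S_k}=\Probmat_{ij}$. Applied with $\mA=\mPdiag^{-1}\mQ'\mPdiag^{-1}$, and using the elementary fact that $\Probmat\circ(\mPdiag^{-1}\mQ'\mPdiag^{-1})=\mPdiag^{-1}(\Probmat\circ\mQ')\mPdiag^{-1}$ (both sides have $(i,j)$-entry $\Probmat_{ij}\mQ'_{ij}/(p_ip_j)$, since $\mPdiag$ is diagonal), this immediately gives the second-term bound $\|\nabla f(x^k)\|^2_{\mPdiag^{-1}(\Probmat\circ\mQ')\mPdiag^{-1}}$.

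For the first term I would expand
\[
(\mI-\mPdiag^{-1}\mZ_k)^\top \mQ'(\mI-\mPdiag^{-1}\mZ_k) \;=\; \mQ' - \mQ'\mPdiag^{-1}\mZ_k - \mZ_k\mPdiag^{-1}\mQ' + \mZ_k\mPdiag^{-1}\mQ'\mPdiag^{-1}\mZ_k,
\]
and take expectation term by term. Since $\mPdiag$ and $\mZ_k$ are diagonal they commute, so $\ED{\mQ'\mPdiag^{-1}\mZ_k}=\mQ'\mPdiag^{-1}\ED{\mZ_k}=\mQ'$ and analogously for the symmetric cross term, while the last term reduces by the identity above to $\mPdiag^{-1}(\Probmat\circ\mQ')\mPdiag^{-1}$. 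Summing gives exactly $\mPdiag^{-1}(\Probmat\circ\mQ')\mPdiag^{-1}-\mQ'$, which is the weight matrix of the $\|h^k\|^2$ term in the statement. Plugging both pieces back into the Young bound and dividing by $2$ completes the proof.

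\paragraph{Main obstacle.} Nothing in the argument is deep; the only place one has to be slightly careful is the Hadamard-product manipulation and the verification that the two occurrences of $\mPdiag^{-1}\mZ_k$ on either side of $\mQ'$ really commute into the clean form $\Probmat\circ\mA$. Everything else is a mechanical expansion of a quadratic form and a single application of $(a+b)^2\le 2a^2+2b^2$.
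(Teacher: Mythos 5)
Your proposal is correct and follows essentially the same route as the paper's proof: the same decomposition of $g^k$ into an $h^k$-part and a $\nabla f(x^k)$-part, the bound $\|a+b\|_{\mQ'}^2\le 2\|a\|_{\mQ'}^2+2\|b\|_{\mQ'}^2$, and the evaluation of the two expected quadratic forms via $\ED{\mZ_k\mA\mZ_k}=\Probmat\circ\mA$ and $\ED{\mZ_k}=\mPdiag$. The only cosmetic difference is that you isolate the identity $\ED{\mZ_k\mA\mZ_k}=\Probmat\circ\mA$ as a reusable step, whereas the paper computes the same quantities inline with $\mS_k\mS_k^\top$ in place of $\mZ_k$.
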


\begin{proof}  
Denote $\mS_{k}$ to be a matrix with columns $e_i$ for $i\in \Range{\mS_{k}}$.  
We first write \[g^k = \underbrace{h^k - \mPdiag^{-1}  \mS_{k} \mS_{k}^\top h^k}_{a} +  \underbrace{ \mPdiag^{-1}    \mS_{k}\mS_{k}^\top \nabla f(x^k)}_{b}.\]
Let us bound the expectation of each term individually. The first term is equal to
\begin{eqnarray*}
\ED{\|a\|_{\mQ'}^2} &=&  
\ED{\left\| \left(\mI -  \mPdiag^{-1}  \mS_{k} \mS_{k}^\top\right) h^k  \right\|_{\mQ'}^2}
\\
&=&(h^k)^\top \ED{ \left(\mI -\mPdiag^{-1}  \mS_{k} \mS_{k}^\top \right)^\top \mQ' \left(\mI - \mPdiag^{-1}  \mS_{k} \mS_{k}^\top \right)} h^k
\\
&=&
(h^k)^\top  \ED{\left(\mQ' -\mPdiag^{-1}  \mS_{k} \mS_{k}^\top \mQ'- \mQ'  \mS_{k} \mS_{k}^\top\mPdiag^{-1}\right)}h^k  
\\
&&
\qquad \qquad  +(h^k)^\top  \ED{ \left(\mPdiag^{-1}  \mS_{k} \mS_{k}^\top \mQ'  \mS_{k} \mS_{k}^\top \mPdiag^{-1} \right)}h^k 
\\
&=& 
(h^k)^\top \left( \mPdiag^{-1} (\Probmat \circ \mQ')\mPdiag^{-1}- \mQ'  \right)h^k.
\end{eqnarray*}
The second term can be bounded as
\begin{eqnarray*}
\ED{\|b\|_{\mQ'}^2} &=&  \ED{\left\|\mPdiag^{-1} \mS_{k}^\top \nabla f(x^k)  \mS_{k} \right\|_{\mQ'}^2}=
\ED{ \|\nabla f(x^k) \|^2_{\mPdiag^{-1}  \mS_{k} \mS_{k}^\top \mQ'  \mS_{k} \mS_{k}^\top\mPdiag^{-1} }}
\\
&=&
\|\nabla f(x^k) \|^2_{\mPdiag^{-1} (\Probmat \circ \mQ')\mPdiag^{-1}}
\end{eqnarray*}
 It remains to combine the two bounds. \qed
\end{proof}

We also state the analogue of Lemma~\ref{lem:2}, which allows for a different norm as well. 
\begin{lemma} \label{lem:2xxxxxx} For all diagonal $\mD\succ 0$ we have
\begin{equation}\label{eq:h_dec_general}
\ED{\|h^{k+1} \|_{\mD}^2} = \|h^k \|_{\mD-\mPdiag \mD}^2 + \|\nabla f(x^k) \|_{\mPdiag \mD}^2.
\end{equation}
\end{lemma}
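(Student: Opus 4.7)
The plan is to first specialize the general update formula \eqref{eq:h^{k+1}} to the coordinate-sketch setting used throughout Section~\ref{sec:CD}, then exploit the fact that $\mD$ is diagonal to split the squared norm into two pieces with disjoint supports. With $\mS_k = \mI_{S_k}$ (and as the derivation used in the proof of Lemma~\ref{lem:1xxxx} makes clear), $\mB^{-1}\mZ_k$ reduces to the coordinate projection $P_{S_k} \eqdef \mI_{S_k}\mI_{S_k}^\top$, which is a $0/1$ diagonal matrix with ones precisely on the indices in $S_k$. Plugging this into \eqref{eq:h^{k+1}} gives the simple coordinate-replacement form
\[
h^{k+1} = (\mI - P_{S_k})h^k + P_{S_k}\nabla f(x^k),
\]
i.e.\ $h^{k+1}$ overwrites the entries of $h^k$ indexed by $S_k$ with the corresponding entries of $\nabla f(x^k)$ and leaves the other entries untouched.

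Next, I will expand $\|h^{k+1}\|_\mD^2$. Because $(\mI-P_{S_k})h^k$ and $P_{S_k}\nabla f(x^k)$ have disjoint coordinate supports and $\mD$ is diagonal (hence commutes with both $P_{S_k}$ and $\mI-P_{S_k}$), the cross term vanishes:
\[
(h^k)^\top (\mI-P_{S_k})\,\mD\, P_{S_k}\nabla f(x^k) = (h^k)^\top \mD (\mI-P_{S_k})P_{S_k}\nabla f(x^k) = 0.
\]
Using the idempotency $(\mI-P_{S_k})^2 = \mI - P_{S_k}$ and $P_{S_k}^2 = P_{S_k}$, the same diagonal-commutativity argument simplifies the two remaining quadratic forms to
\[
\|h^{k+1}\|_\mD^2 = (h^k)^\top \mD (\mI - P_{S_k})\, h^k + (\nabla f(x^k))^\top \mD\, P_{S_k}\, \nabla f(x^k).
\]

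To finish, I will take expectation over $\cD$, using that $\ED{P_{S_k}}$ is the diagonal matrix whose $i$-th entry equals $\Prob{i\in S_k} = p_i$, so $\ED{P_{S_k}} = \mPdiag$. Since $\mD$ and $\mPdiag$ are both diagonal they commute, yielding $\ED{\mD(\mI-P_{S_k})} = \mD - \mPdiag\mD$ and $\ED{\mD P_{S_k}} = \mPdiag\mD$, which delivers \eqref{eq:h_dec_general} directly. The proof is a short direct computation; the only ``obstacle'' is recognizing that the coordinate-sketch structure collapses the general sketch-and-project update into a coordinate-replacement operation, after which the disjoint-support argument makes the cross term disappear without any appeal to the more elaborate machinery used in the general Lemma~\ref{lem:2}.
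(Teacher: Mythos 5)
Your proof is correct and follows essentially the same route as the paper's: both specialize the update to the coordinate-replacement form $h^{k+1}=(\mI-\mS_k\mS_k^\top)h^k+\mS_k\mS_k^\top\nabla f(x^k)$ (your $P_{S_k}$ is exactly the paper's $\mS_k\mS_k^\top$), observe that the cross term vanishes because $\mD$ is diagonal and the projector is an idempotent $0/1$ diagonal matrix, and conclude by taking expectation with $\ED{\mS_k\mS_k^\top}=\mPdiag$. No gaps.
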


\begin{proof}  
Denote $\mS_{k}$ to be a matrix with columns $e_i$ for $i\in \mS_{k}$.  
We first write \[h^{k+1} = h^k - \mS_{k} \mS_{k}^\top  h^k + \mS_{k} \mS_{k}^\top  \nabla f(x^k)  .\]
Therefore
\begin{eqnarray*}
\ED{\|h^{k+1}\|_{\mD}^2} &=& \ED{\left\|(\mI- \mS_{k}\mS_{k}^\top )h^k + \mS_{k} \mS_{k}^\top  \nabla f(x^k) \right\|_{\mD}^2}
\\
&=& 
\ED{\left\|(\mI- \mS_{k}\mS_{k}^\top )h^k\right\|_{\mD}^2} +\ED{\left\| \mS_{k} \mS_{k}^\top  \nabla f(x^k) \right\|_{\mD}^2} 
\\
&& \qquad 
+ 2\ED{{h^k}^\top (\mI- \mS_{k}\mS_{k}^\top ) \mD  \mS_{k} \mS_{k}^\top  \nabla f(x^k)} 
\\
&=&
\|h^k \|_{\mD-\mPdiag \mD}^2 + \|\nabla f(x^k) \|_{\mPdiag \mD}^2.
\end{eqnarray*}
 \qed
\end{proof}

\subsection{Proof of Theorem~\ref{t:imp_nacc}}
\begin{proof}
Throughout the proof, we will use the following Lyapunov function: 
\begin{equation*}
	\Lnacc^k \eqdef f(x^{k})-f(x^*)+ \sigma \|h^{k} \|^2_{\mP^{-1}}.
\end{equation*}
Following similar steps to what we did before, we obtain
\begin{eqnarray*}
\E{\Lnacc^{k+1}}
&\stackrel{\eqref{eq:M_smooth}}{\leq}& 
f(x^{k})-f(x^*)+\alpha \E{\langle\nabla f(x^k), g^k\rangle}+\frac{\alpha^2}{2} \E{\|g^k\|_\mM^2} +\sigma\E{\|h^{k+1} \|^2_{\mPdiag^{-1}}}
\\
&=&
f(x^{k})-f(x^*)-\alpha \|\nabla f(x^k) \|_2^2+\frac{\alpha^2}{2} \E{\|g^k\|_\mM^2} +\sigma\E{\|h^{k+1} \|^2_{\mPdiag^{-1}}}
\\
&\stackrel{\eqref{eq:almost_eso_g}}{\leq} &
f(x^{k})-f(x^*)-\alpha \|\nabla f(x^k)\|_2^2
+\alpha^2  \|\nabla f(x^k) \|^2_{\mPdiag^{-1} (\Probmat \circ \mM)\mPdiag^{-1}} + \alpha^2\|h^k\|^2_{\mPdiag^{-1} (\Probmat \circ \mM)\mPdiag^{-1}-\mM}
\\
&& \qquad
+\sigma\E{\|h^{k+1} \|^2_{\mPdiag^{-1}}}.
\end{eqnarray*}
This is the place where the ESO~assumption comes into play. By applying it to the right-hand side of the bound above, we obtain
\begin{eqnarray*}
\E{\Lnacc^{k+1}} 
&\stackrel{\eqref{eq:ESO}}{\leq} &
f(x^{k})-f(x^*)-\alpha \|\nabla f(x^k)\|_2^2
+\alpha^2  \|\nabla f(x^k) \|^2_{\mVdiag\mPdiag^{-1}} + \alpha^2\|h^k\|^2_{\mVdiag\mPdiag^{-1}-\mM}
\\
&& \qquad
+\sigma\E{\|h^{k+1} \|^2_{\mPdiag^{-1}}}
\\
&\stackrel{\eqref{eq:h_dec_general}}{=} &
f(x^{k})-f(x^*)-\alpha \|\nabla f(x^k)\|_2^2
+\alpha^2  \|\nabla f(x^k) \|^2_{\mVdiag\mPdiag^{-1}}  + \alpha^2\|h^k\|^2_{\mVdiag\mPdiag^{-1}-\mM}
\\
&& \qquad
+\sigma\| \nabla f(x^k)\|_2^2+\sigma\|h^k \|^2_{\mPdiag^{-1}-\mI}
\\
&= &
f(x^{k})-f(x^*)-\left(\alpha - \alpha^2\max_{i}\frac{v_{i}}{p_{i}}- \sigma \right)\|\nabla f(x^k)\|_2^2
\\
&& \qquad
+  \|h^k\|^2_{\alpha^2(\mVdiag\mPdiag^{-1}-\mM)+\sigma(\mPdiag^{-1}-\mI)}.
\end{eqnarray*}
Due to Polyak-\L{}ojasiewicz inequality, we can further upper bound the last expression by
\begin{eqnarray*}
\left(1-\left(\alpha - \alpha^2\max_{i}\frac{v_{i}}{p_{i}}- \sigma \right)\mu \right)(f(x^{k})-f(x^*))
+  \|h^k\|^2_{\alpha^2(\mVdiag\mP^{-1}-\mM)+\sigma(\mP^{-1}-\mI)}.
\end{eqnarray*}
To finish the proof, it remains to use~\eqref{eq:assumption}. \qed
\end{proof}
\subsection{Proof of Corollary~\ref{cor:imp_nacc}}
The claim was obtained by choosing carefully $\alpha$ and~$\sigma$ using numerical grid search. Note that by strong convexity we have  $\mI\succeq \mu \diag(\mM)^{-1}$, so we can satisfy assumption~\eqref{eq:assumption}. Then, the claim follows immediately noticing that we can also set $\mVdiag=\diag(\mM)$ while maintaining
\[
\left(\alpha - \alpha^2\max_{i}\frac{\mM_{ii}}{p_{i}}- \sigma \right) \geq \frac{0.117 }{\trace(\mM)}.
\]

\subsection{Accelerated \texttt{SEGA} with arbitrary sampling\label{sec:acc_thm}} 
Before establishing the main theorem, we first state two technical lemmas which will be crucial for the analysis. First one, Lemma~\ref{lem:mirror} provides a key inequality following from~\eqref{eq:z_update}. The second one, Lemma~\ref{lem:acc_grad}, analyzes update~\eqref{eq:y_update} and was technically established throughout the proof of Theorem~\ref{t:imp_nacc}. We include a proof of lemmas in Appendix~\ref{app:mirror} and~\ref{app:grad} respectively. 

\begin{lemma}\label{lem:mirror}
For every $u\in \R^n$ we have
\begin{eqnarray}
&& \beta\langle \nabla f(x^{k+1}), z^{k}-u  \rangle -\frac{\beta \mu}{2}\|x^{k+1}-u \|_2^2\nonumber
\\
&& \qquad  \qquad \leq 
\beta^2\frac12\E{\|g^k\|_2^2}+\frac{1}{2}\|z^k-u\|_2^2
-\frac{1+\beta \mu}{2}\E{\| z^{k+1}-u\|_2^2} \label{eq:md_imp}
\end{eqnarray}
 \end{lemma}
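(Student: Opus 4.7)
The plan is to exploit the fact that the $z$-update in line~\ref{eq:z_update} of Algorithm~\ref{alg:acc} is the first-order optimality condition of a strongly convex quadratic, and then apply the standard three-point Euclidean identity. Concretely, reading off $z^{k+1}=\tfrac{1}{1+\beta\mu}\bigl(z^k+\beta\mu x^{k+1}-\beta g^k\bigr)$, I would rearrange to the equivalent identity
\[
\beta g^k \;=\; (z^k - z^{k+1}) \;+\; \beta\mu\,(x^{k+1} - z^{k+1}).
\]
This expresses $\beta g^k$ as a sum of two ``error'' directions, each of which pairs naturally with $z^{k+1}-u$ through the three-point identity $\langle a-b,\,b-c\rangle=\tfrac12\|a-c\|^2-\tfrac12\|a-b\|^2-\tfrac12\|b-c\|^2$.

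Next I would decompose $\langle g^k, z^k - u\rangle = \langle g^k, z^k - z^{k+1}\rangle + \langle g^k, z^{k+1}-u\rangle$ and handle the two pieces separately. For the second piece, multiplying by $\beta$ and substituting the identity above, I apply the three-point formula twice: once with the triple $(z^k, z^{k+1}, u)$ and once with $(x^{k+1}, z^{k+1}, u)$, producing exactly the three ``distance to $u$'' quadratics appearing in the claim, along with two non-positive error terms $-\tfrac12\|z^k-z^{k+1}\|^2$ and $-\tfrac{\beta\mu}{2}\|x^{k+1}-z^{k+1}\|^2$. For the first piece, I invoke Young's inequality to get $\beta\langle g^k, z^k - z^{k+1}\rangle \le \tfrac{\beta^2}{2}\|g^k\|^2 + \tfrac12\|z^k-z^{k+1}\|^2$, which is designed precisely so that the $\tfrac12\|z^k-z^{k+1}\|^2$ term cancels the one coming from the three-point identity.

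After summing the pieces, the term $-\tfrac{\beta\mu}{2}\|x^{k+1}-z^{k+1}\|^2$ is non-positive and can be dropped, leaving
\[
\beta\langle g^k, z^k - u\rangle - \tfrac{\beta\mu}{2}\|x^{k+1}-u\|_2^2 \;\le\; \tfrac{\beta^2}{2}\|g^k\|_2^2 + \tfrac12\|z^k-u\|_2^2 - \tfrac{1+\beta\mu}{2}\|z^{k+1}-u\|_2^2.
\]
Finally, I take conditional expectation with respect to $\mS_k$: since $x^{k+1}$ and $z^k$ are measurable with respect to the history, and $g^k$ is an unbiased estimator of $\nabla f(x^{k+1})$ by construction (see~\eqref{eq:unbiased_estimator}), the left-hand side becomes $\beta\langle \nabla f(x^{k+1}), z^k - u\rangle - \tfrac{\beta\mu}{2}\|x^{k+1}-u\|_2^2$, exactly as required.

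The only step that requires any care is bookkeeping the indexing in Algorithm~\ref{alg:acc} (the $z$-update as written has matching sub/superscripts and should be read as producing $z^{k+1}$ from $z^k$ and $x^{k+1}$) and choosing the Young's inequality constant in the first piece so that the resulting coefficient in front of $\|z^k-z^{k+1}\|^2$ matches the one generated by the three-point identity. Everything else is a routine application of Euclidean identities, so I do not expect any real obstacle beyond careful accounting.
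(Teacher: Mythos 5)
Your proposal is correct and follows essentially the same route as the paper's proof in Appendix~\ref{app:mirror}: the identity $\beta g^k=(z^k-z^{k+1})+\beta\mu(x^{k+1}-z^{k+1})$ is exactly the first-order optimality condition of the quadratic whose minimizer defines $z^{k+1}$, and the paper likewise applies the generalized Pythagorean (three-point) identity to the pairs $(z^k,z^{k+1},u)$ and $(x^{k+1},z^{k+1},u)$, absorbs $\beta\langle g^k,z^k-z^{k+1}\rangle-\tfrac12\|z^k-z^{k+1}\|_2^2$ via Young's inequality into $\tfrac{\beta^2}{2}\|g^k\|_2^2$, drops the nonpositive $-\tfrac{\beta\mu}{2}\|x^{k+1}-z^{k+1}\|_2^2$ term, and finishes by taking conditional expectation and invoking unbiasedness~\eqref{eq:unbiased_estimator}. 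No gaps.
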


 \begin{lemma} \label{lem:acc_grad} Letting $ \TD(v,p)\eqdef\max_i \frac{\sqrt{v_i}}{p_i}$, we have
\begin{eqnarray}
f(x^{k+1})-\E{f(y^{k+1})} +\|h^k\|^2_{\alpha^2(\mVdiag\mPdiag^{-3}-\mPdiag^{-1}\mM\mPdiag^{-1})} 
\geq  \left(\alpha - \alpha^2\TD(v,p)^2 \right)
 \| \nabla f(x^k)\|^2_{\mPdiag^{-1}}.  \label{eq:gd}
\end{eqnarray}
 \end{lemma}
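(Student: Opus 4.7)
The plan is to view the $y$-update as a single gradient step from $x^{k+1}$ with direction $\mPdiag^{-1}g^{k+1}$ (or $\mPdiag^{-1}g^{k}$ depending on indexing), apply $\mM$-smoothness to control the function decrease, and then pull in the variance bound of Lemma~\ref{lem:1xxxx} with a carefully chosen metric.

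First, I would start from $\mM$-smoothness (Assumption~\ref{ass:M_smooth}) applied to $y^{k+1}=x^{k+1}-\alpha\mPdiag^{-1}g^{k+1}$:
\[
f(y^{k+1}) \;\leq\; f(x^{k+1}) - \alpha\langle \nabla f(x^{k+1}),\mPdiag^{-1}g^{k+1}\rangle + \tfrac{\alpha^2}{2}\|\mPdiag^{-1}g^{k+1}\|_{\mM}^2.
\]
Taking conditional expectation and using unbiasedness $\E{g^{k+1}}=\nabla f(x^{k+1})$ (which holds because \texttt{SEGA}'s $g$-update is designed to satisfy \eqref{eq:unbiased}) transforms the linear term into $-\alpha\|\nabla f(x^{k+1})\|_{\mPdiag^{-1}}^2$. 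The key step is then to bound the quadratic term $\E{\|g^{k+1}\|_{\mPdiag^{-1}\mM\mPdiag^{-1}}^2}$ by invoking Lemma~\ref{lem:1xxxx} with the specific metric $\mQ' \eqdef \mPdiag^{-1}\mM\mPdiag^{-1}$.

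For this choice of $\mQ'$ the Hadamard product simplifies via the identity $\Probmat \circ (\mPdiag^{-1}\mM\mPdiag^{-1}) = \mPdiag^{-1}(\Probmat\circ \mM)\mPdiag^{-1}$ (since the outer $\mPdiag^{-1}$ factors are diagonal), so the matrix appearing in Lemma~\ref{lem:1xxxx} becomes $\mPdiag^{-2}(\Probmat\circ \mM)\mPdiag^{-2}$. Plugging in the ESO assumption~\eqref{eq:ESO}, $\Probmat\circ\mM\preceq\mPdiag\mVdiag$, collapses this to $\mVdiag\mPdiag^{-3}$. Combining everything yields
\[
f(x^{k+1})-\E{f(y^{k+1})} \;\geq\; \alpha\|\nabla f(x^{k+1})\|_{\mPdiag^{-1}}^2 - \alpha^2\|\nabla f(x^{k+1})\|_{\mVdiag\mPdiag^{-3}}^2 - \alpha^2\|h^{k+1}\|_{\mVdiag\mPdiag^{-3}-\mPdiag^{-1}\mM\mPdiag^{-1}}^2.
\]

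Finally, to reach the stated form, I would use the elementary bound $\|v\|_{\mVdiag\mPdiag^{-3}}^2 = \sum_i \tfrac{v_i}{p_i^3}v_i^2 \leq \bigl(\max_i \tfrac{v_i}{p_i^2}\bigr)\|v\|_{\mPdiag^{-1}}^2 = \TD(v,p)^2\|v\|_{\mPdiag^{-1}}^2$, which converts the two $\nabla f$-terms into the single factor $(\alpha-\alpha^2\TD(v,p)^2)\|\nabla f\|_{\mPdiag^{-1}}^2$ matching the right-hand side of~\eqref{eq:gd}. Modulo the $k$ versus $k{+}1$ indexing in the statement, this gives the lemma. The main subtlety is just the bookkeeping with the metric $\mQ'=\mPdiag^{-1}\mM\mPdiag^{-1}$ and checking that the Hadamard/ESO manipulation indeed produces $\mVdiag\mPdiag^{-3}$; no new ideas beyond those in the proof of Theorem~\ref{t:imp_nacc} are required.
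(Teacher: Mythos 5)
Your proposal is correct and follows essentially the same route as the paper's own proof: apply $\mM$-smoothness to the $y$-update, use unbiasedness of $g$ for the linear term, invoke Lemma~\ref{lem:1xxxx} with $\mQ'=\mPdiag^{-1}\mM\mPdiag^{-1}$, simplify the Hadamard product by pulling the diagonal factors out, apply ESO to get $\mVdiag\mPdiag^{-3}$, and finish with $\max_i v_i/p_i^2=\TD(v,p)^2$. The $k$ versus $k{+}1$ indexing wrinkle you flag is present in the paper's own write-up as well, so nothing is missing.
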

 
Now we state the main theorem of Section~\ref{s:acc}, providing a convergence rate of \texttt{ASEGA} (Algorithm~\ref{alg:acc}) for arbitrary minibatch sampling.
As we mentioned, the convergence rate is, up to a constant factor, same as state-of-the-art minibatch accelerated coordinate descent~\cite{AccMbCd}. 

  \begin{theorem}\label{t:imp_acc}
Assume $\mM$--smoothness and $\mu$--strong convexity and that $v$ satisfies~\eqref{eq:ESO}. 
 Denote 
 \[
 \Lacc^{k} \eqdef\frac{2}{75} \frac{  \TD(v,p)^{-2}}{ \tau^2 } \left(\E{f(y^{k})}-f(x^{*})\right)
 + \frac{1+\beta\mu}{2}\E{\| z^{k}-x^*\|_2^2}+
 \sigma \E{\|h^{k} \|^2_{\mPdiag^{-2}}}
 \] 
 and choose 
\begin{eqnarray}
c_1&=&\max \left( 1,  \TD(v,p)^{-1} \frac{\sqrt{\mu}}{\min_i p_i} \right) \label{eq:c4_choice}
\\
\alpha &=&   \frac{1}{5 \TD(v,p)^{2}} \label{eq:alfa_choice}
\\
\beta &=& \frac{2}{75 \tau  \TD(v,p)^{2}} \label{eq:beta_choice}
\\
\sigma &=& 5\beta^2  \label{eq:sigma_choice}
\\
\tau &=&\frac{\sqrt{\frac{4}{9\cdot 5^4}  \TD(v,p)^{-4} \mu^2+\frac{8}{75}  \TD(v,p)^{-2} \mu }-\frac{2}{75} \TD(v,p)^{-2} \mu}{2}   \label{eq:tau_choice}
\end{eqnarray}
Then, we have
\[
\E{\Lacc^{k}}\leq \left(1-c_1^{-1}\tau \right)^k\Lacc^0.
\]
\end{theorem}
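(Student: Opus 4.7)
The plan is to adapt the linear-coupling proof strategy (in the spirit of Allen-Zhu--Orecchia and the accelerated coordinate descent analysis of~\cite{AccMbCd}) to the \texttt{SEGA} setting. The sequences $y^k$ and $z^k$ play the role of the gradient and mirror descent iterates respectively, linked by the convex combination $x^{k+1}=(1-\tau)y^k+\tau z^k$. The novel ingredient compared to classical acceleration is that the update direction is the \texttt{SEGA} estimator $g^k$ rather than $\nabla f(x^k)$, so $\Lacc^k$ carries a third term $\sigma\E{\|h^k\|^2_{\mPdiag^{-2}}}$ whose job is to provide linear convergence of the gradient-learning process and absorb the variance of $g^k$.

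The first step is to invoke three ``per-sequence'' inequalities. Lemma~\ref{lem:acc_grad} controls the gradient step, giving
\[\E{f(y^{k+1})}\leq f(x^{k+1})-(\alpha-\alpha^2\TD(v,p)^2)\|\nabla f(x^{k+1})\|_{\mPdiag^{-1}}^2+\|h^k\|^2_{\alpha^2(\mVdiag\mPdiag^{-3}-\mPdiag^{-1}\mM\mPdiag^{-1})}.\]
Lemma~\ref{lem:mirror} at $u=x^*$ controls $\E{\|z^{k+1}-x^*\|_2^2}$ in terms of $\beta^2\E{\|g^k\|_2^2}$, $\|z^k-x^*\|_2^2$, and an inner product against $\nabla f(x^{k+1})$. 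Applying Lemma~\ref{lem:2xxxxxx} with $\mD=\mPdiag^{-2}$ yields
\[\sigma\E{\|h^{k+1}\|^2_{\mPdiag^{-2}}}=\sigma\|h^k\|^2_{\mPdiag^{-2}-\mPdiag^{-1}}+\sigma\|\nabla f(x^{k+1})\|^2_{\mPdiag^{-1}},\]
so that the third Lyapunov term contracts at rate $1-\min_i p_i$ while its leakage into $\|\nabla f(x^{k+1})\|^2_{\mPdiag^{-1}}$ can be reabsorbed into the surplus from the gradient step.

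The second step is to combine these using the coupling identity $z^k-x^{k+1}=\frac{1-\tau}{\tau}(x^{k+1}-y^k)$, which rewrites the mirror-descent inner product as $\tfrac{1-\tau}{\tau}\langle\nabla f(x^{k+1}),x^{k+1}-y^k\rangle+\langle\nabla f(x^{k+1}),x^{k+1}-x^*\rangle$. Convexity bounds the first term by $\tfrac{1-\tau}{\tau}(f(y^k)-f(x^{k+1}))$ and the second by $f(x^{k+1})-f(x^*)$. After weighting the gradient-step inequality by $\tfrac{2}{75}\TD(v,p)^{-2}/\tau^2$ (the coefficient that defines the first piece of $\Lacc^k$) and multiplying the mirror-step inequality by one, the $f(y^k)$ and $f(y^{k+1})$ contributions telescope with exactly the $(1-\tau)$ factor required for an accelerated rate. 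The quadratic $\beta^2\E{\|g^k\|_2^2}$ is bounded via Lemma~\ref{lem:1xxxx} with $\mQ'=\mI$ together with the ESO bound~\eqref{eq:ESO}, introducing further $\|\nabla f(x^{k+1})\|^2$ and $\|h^k\|^2$ terms that get netted against the gradient-step surplus and the contraction provided by the third Lyapunov piece.

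The main obstacle is the parameter bookkeeping. After all cross-terms are grouped, the coefficient in front of $\|\nabla f(x^{k+1})\|^2_{\mPdiag^{-1}}$ and in front of $\|h^k\|^2$ (in the appropriate weighted norm) must both be non-positive for a recursion $\E{\Lacc^{k+1}}\leq(1-c_1^{-1}\tau)\Lacc^k$ to close. The first non-positivity condition forces $\alpha=\tfrac{1}{5\TD(v,p)^2}$, and once $\beta,\sigma$ are expressed in terms of $\tau$ via \eqref{eq:beta_choice}--\eqref{eq:sigma_choice}, the second condition reduces to a quadratic inequality in $\tau$ whose positive root is precisely \eqref{eq:tau_choice}. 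The factor $c_1$ in \eqref{eq:c4_choice} reflects that the natural accelerated rate $\tau$ is sometimes slower than the linear contraction $\min_i p_i$ coming from the $h^k$-learning process; taking the maximum guarantees that all three Lyapunov pieces shrink simultaneously at rate $(1-c_1^{-1}\tau)$.
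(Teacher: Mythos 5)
Your proposal follows essentially the same route as the paper's proof: the same three ingredients (Lemma~\ref{lem:mirror} at $u=x^*$, Lemma~\ref{lem:acc_grad} for the gradient step, and Lemma~\ref{lem:2xxxxxx} with $\mD=\mPdiag^{-2}$ for the $h$-recursion), the same variance bound via Lemma~\ref{lem:1xxxx} with $\mQ'=\mI$, the same coupling-plus-convexity decomposition of the inner product, and the same parameter bookkeeping leading to the quadratic in $\tau$ and the correction factor $c_1$. The only slips are cosmetic: for $\mQ'=\mI$ the ESO inequality is not actually needed since $\Probmat\circ\mI=\mPdiag$ exactly, and $c_1>1$ is required precisely when the accelerated rate $\tau$ would be \emph{faster} (not slower) than the contraction $\min_i p_i$ of the $h^k$-learning process, which is why the rate is throttled to $1-c_1^{-1}\tau$.
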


\begin{proof}  The proof technique is inspired by \cite{allen2014linear}. First of all, let us see what strong convexity of $f$ gives us:
\begin{eqnarray*}
\beta \left( f(x^{k+1})- f(x^*)\right)
\leq
\beta \langle \nabla f(x^{k+1}),x^{k+1}-x^*\rangle -\frac{\beta \mu}{2} \|x^*-x^{k+1}\|_2^2.
\end{eqnarray*}
Thus, we are interested in finding an upper bound for the scalar product that appeared above. We have
 \begin{eqnarray*}
&& \beta\langle \nabla f(x^{k+1}), z^{k}-u  \rangle -\frac{\beta \mu}{2}\|x^{k+1}-u \|_2^2+ \sigma \E{\|h^{k+1} \|^2_{\mPdiag^{-2}}}
\\
&& \qquad  \qquad 
\stackrel{\eqref{eq:md_imp}}{\leq} 
\beta^2\frac12\E{\|g^k\|_2^2}+\frac{1}{2}\|z^k-u\|_2^2
-\frac{1+\beta \mu}{2}\E{\| z^{k+1}-u\|_2^2} +  \sigma \E{\|h^{k+1} \|^2_{\mPdiag^{-2}}}.
\end{eqnarray*}
Using the Lemmas introduced above, we can upper bound the norms of $g^k$ and $h^{k+1}$ by using norms of $h^k$ and $\nabla f(x^k)$ to get the following:
\begin{eqnarray*}
&&\beta^2\frac12\E{\|g^k\|_2^2} +  \sigma \E{\|h^{k+1} \|^2_{\mPdiag^{-2}}}\\
&&\qquad\qquad\stackrel{\eqref{eq:h_dec_general}}{\leq}
\beta^2\frac12\E{\|g^k\|_2^2}
+\sigma\|h^{k} \|^2_{\mPdiag^{-2}-\mPdiag^{-1}} +\sigma  \|\nabla f(x^k)\|^2_{\mPdiag^{-1}}
\\
&&\qquad\qquad
\stackrel{\eqref{eq:almost_eso_g}}{\leq}
\beta^2\|h^k \|_{\mPdiag^{-1}- \mI}^2 + \beta^2 \|\nabla f(x^k)\|^2_{\mPdiag^{-1}}
+\sigma\|h^{k} \|^2_{\mPdiag^{-2}-\mPdiag^{-1}} +\sigma  \|\nabla f(x^k)\|^2_{\mPdiag^{-1}}.
\end{eqnarray*}
Now, let us get rid of $\nabla f(x^k)$ by using the gradients property from Lemma~\ref{lem:acc_grad}:
\begin{eqnarray*}
&&\beta^2\frac12\E{\|g^k\|_2^2} +  \sigma \E{\|h^{k+1} \|^2_{\mPdiag^{-2}}}\\
&&\qquad\qquad
\stackrel{\eqref{eq:gd}}{\leq} 
\beta^2\|h^k \|_{\mPdiag^{-1}- \mI}^2+ \left( \beta^2 +\sigma \right) \frac{f(x^{k+1})-f(y^{k+1}) +\|h^k\|^2_{\alpha^2(\mVdiag\mPdiag^{-3}-\mPdiag^{-1}\mM\mPdiag^{-1})}}{ \alpha - \alpha^2\TD(v,p)^2} 
+\sigma\|h^{k} \|^2_{\mPdiag^{-2}-\mPdiag^{-1}}
\\
&& \qquad\qquad
=
\|h^k \|^2_{\beta^2(\mPdiag^{-1}- \mI)+ \frac{(\beta^2+\sigma) \alpha^2}{ \alpha - \alpha^2\TD(v,p)^2} (\mVdiag\mPdiag^{-3}-\mPdiag^{-1}\mM\mPdiag^{-1}) +\sigma (\mPdiag^{-2}-\mPdiag^{-1})}
\\
&&\qquad\qquad\qquad
+  \frac{\beta^2 + \sigma }{ \alpha - \alpha^2\TD(v,p)^2}( f(x^{k+1})-\E{f(y^{k+1})}) 
\\
&& \qquad\qquad
\leq
\|h^k \|^2_{\beta^2\mPdiag^{-1}+ \frac{(\beta^2+\sigma) \alpha^2}{ \alpha - \alpha^2\TD(v,p)^2} \mVdiag\mPdiag^{-3} +\sigma (\mPdiag^{-2}-\mPdiag^{-1})}
+  \frac{\beta^2 + \sigma }{ \alpha - \alpha^2\TD(v,p)^2}( f(x^{k+1})-\E{f(y^{k+1})}) .
\end{eqnarray*}
Plugging this into the bound with which we started the proof, we deduce
\begin{eqnarray*}
&& \beta\langle \nabla f(x^{k+1}), z^{k}-u  \rangle -\frac{\beta \mu}{2}\|x^{k+1}-u \|_2^2+ \sigma \E{\|h^{k+1} \|^2_{\mPdiag^{-2}}}\\
&& \qquad  \qquad 
\leq
\|h^k \|^2_{\beta^2\mPdiag^{-1}+ \frac{(\beta^2+\sigma) \alpha^2}{ \alpha - \alpha^2\TD(v,p)^2} \mVdiag\mPdiag^{-3} +\sigma (\mPdiag^{-2}-\mPdiag^{-1})}
\\
&& \qquad \qquad \qquad 
+  \frac{\beta^2 + \sigma }{ \alpha - \alpha^2\TD(v,p)^2}( f(x^{k+1})-\E{f(y^{k+1})}) 
+\frac{1}{2}\|z^k-u \|_2^2
-\frac{1+\beta \mu}{2}\E{\| z^{k+1}-u\|_2^2 }.
\end{eqnarray*} 
Recalling our first step, we get with a few rearrangements
\begin{eqnarray*}
& & \beta \left( f(x^{k+1})- f(x^*)\right)
 \\
& & \qquad \qquad 
\leq
\beta \langle \nabla f(x^{k+1}),x^{k+1}-x^*\rangle -\frac{\beta \mu}{2} \|x^*-x^{k+1}\|_2^2
\\
& & \qquad \qquad 
=
\beta \langle \nabla f(x^{k+1}),x^{k+1}-z^{k}\rangle +\beta \langle \nabla f(x^{k+1)},z^{k}-x^*\rangle -\frac{\beta \mu}{2} \|x^*-x^{k+1}\|_2^2
\\
& & \qquad \qquad 
=
\frac{(1-\tau)\beta}{\tau} \langle \nabla f(x^{k+1}),y^k-x^{k+1}\rangle +\beta \langle \nabla f(x^{k+1}),z^{k}-x^*\rangle -\frac{\beta \mu}{2} \|x^*-x^{k+1}\|_2^2
\\
& &\qquad \qquad  
\leq 
\frac{(1-\tau)\beta}{\tau}\left( f(y^k)-f(x^{k+1})\right)
+
\|h^k \|^2_{\beta^2\mPdiag^{-1}+ \frac{(\beta^2+\sigma) \alpha^2}{ \alpha - \alpha^2\TD(v,p)^2} \mVdiag\mPdiag^{-3} +\sigma (\mPdiag^{-2}-\mPdiag^{-1})}
\\
& & \qquad \qquad\qquad+
  \frac{\beta^2 + \sigma }{ \alpha - \alpha^2\TD(v,p)^2}( f(x^{k+1})-\E{f(y^{k+1})}) 
   +
\frac{1}{2}\|z^k-x^*\|_2^2
\\
& & \qquad \qquad\qquad
-\frac{1+\beta\mu}{2}\E{\| z^{k+1}-x^*\|_2^2} - \sigma \E{\|h^{k+1} \|^2_{\mPdiag^{-2}}}.
\end{eqnarray*}
Let us choose $\sigma$, $\beta$ such that for some constant $c_2$ (which we choose at the end) we have
 \[
c_2\sigma=\beta^2, \qquad \beta=\frac{\alpha - \alpha^2\TD(v,p)^2}{(1+c_2^{-1}) \tau }.
 \]
 Consequently, we have
 \begin{eqnarray*}
&&
\frac{\alpha - \alpha^2\TD(v,p)^2}{(1+c_2^{-1}) \tau^2 } \left(\E{f(y^{k+1})}-f(x^{*})\right)
 + \frac{1+\beta\mu}{2}\E{\| z^{k+1}-x^*\|_2^2}+
 \sigma \E{\|h^{k+1} \|^2_{\mPdiag^{-2}}}
 \\
  &&
 \qquad \qquad \leq 
(1-\tau)\frac{\alpha - \alpha^2\TD(v,p)^2}{(1+c_2^{-1}) \tau^2 }\left( f(y^k)-f(x^{*})\right)+
  \frac{1}{2}\|z^k-x^*\|_2^2
  \\
&& 
\qquad \qquad \qquad+
  \|h^k \|^2_{\left(\mPdiag^{-1} -(1-c_2)\mI+  \frac{(1+c_2)  \alpha^2}{ \alpha - \alpha^2\TD(v,p)^2} \mVdiag\mPdiag^{-2} \right) \sigma \mPdiag^{-1}}
 \end{eqnarray*}
 Let us make a particular choice of $\alpha$, so that for some constant $c_3$ (which we choose at the end) we can obtain the equations below:
 \[
  \alpha = \frac{1}{c_3\TD(v,p)^2} 
  \quad
  \Rightarrow 
  \quad
  \alpha - \alpha^2\TD(v,p)^2
  =
   \frac{c_3-1}{c_3^2}\TD(v,p)^{-2}, \quad
  \frac{\alpha^2}{\alpha - \alpha^2\TD(v,p)^2}
  =
  \frac{1}{(c_3-1)\TD(v,p)^2}
  .
 \]
 Thus
  \begin{eqnarray*}
&&
\frac{ \frac{c_3-1}{c_3^2}\TD(v,p)^{-2}}{(1+c_2^{-1}) \tau^2 } \left(\E{f(y^{k+1})}-f(x^{*})\right)
 + \frac{1+\beta\mu}{2}\E{\| z^{k+1}-x^*\|_2^2}+
 \sigma \E{\|h^{k+1} \|^2_{\mPdiag^{-2}}}
 \\
  &&
 \qquad \qquad \leq 
(1-\tau)\frac{ \frac{c_3-1}{c_3^2}\TD(v,p)^{-2}}{(1+c_2^{-1}) \tau^2 }\left( f(y^k)-f(x^{*})\right)+
  \frac{1}{2}\|z^k-x^*\|_2^2
  \\
&& 
\qquad \qquad \qquad+
  \|h^k \|^2_{\left(\mPdiag^{-1} -(1-c_2)\mI+  \frac{(1+c_2) }{ (c_3-1)\TD(v,p)^{2}} \mVdiag\mPdiag^{-2} \right) \sigma \mPdiag^{-1}}.
 \end{eqnarray*}
 Using the definition of $\TD(v,p)$, one can see that the above gives

  \begin{eqnarray*}
&&
\frac{ \frac{c_3-1}{c_3^2} \TD(v,p)^{-2}}{(1+c_2^{-1}) \tau^2 } \left(\E{f(y^{k+1})}-f(x^{*})\right)
 + \frac{1+\beta\mu}{2}\E{\| z^{k+1}-x^*\|_2^2}+
 \sigma \E{\|h^{k+1} \|^2_{\mPdiag^{-2}}}
 \\
  &&
 \qquad \qquad \leq 
(1-\tau)\frac{ \frac{c_3-1}{c_3^2} \TD(v,p)^{-2}}{(1+c_2^{-1}) \tau^2 }\left( f(y^k)-f(x^{*})\right)+
  \frac{1}{2}\|z^k-x^*\|_2^2
+
  \|h^k \|^2_{\left(\mPdiag^{-1} -(1-c_2)\mI+  \frac{1+c_2 }{ c_3-1} \mI \right) \sigma \mPdiag^{-1}}.
 \end{eqnarray*}
 To get the convergence rate, we shall establish
 \begin{equation}\label{eq:h_suff_acc_imp}
 \left(1-c_2-  \frac{1+c_2 }{ c_3-1}\right) c_1 \mI \succeq \tau\mPdiag^{-1}
 \end{equation}
 and
 \begin{equation}\label{eq:acc_tau_implicit}
1+\beta \mu \geq \frac{1}{1-\tau}.
 \end{equation}
 To this end, let us recall that
 \[
 \beta=\frac{c_3-1}{c^2_2}  \TD(v,p)^{-2}\tau^{-1} \frac{1}{1+c_2^{-1}}.
 \]
Now we would like to set equality in~\eqref{eq:acc_tau_implicit}, which yields
\[
0=\tau^2+\frac{c_3-1}{c^2_2} \TD(v,p)^{-2}\frac{1}{1+c_2^{-1}}  \mu  \tau- \frac{c_3-1}{c^2_2}  \TD(v,p)^{-2}\frac{1}{1+c_2^{-1}} \mu =0.
\]
 This, in turn, implies
 \begin{eqnarray*}
\tau
&=&
\frac{\sqrt{\left(\frac{c_3-1}{c^2_2}\right)^2  \TD(v,p)^{-4}\frac{1}{\left(1+c_2^{-1}\right)^2} \mu^2+4\frac{c_3-1}{c^2_2}  \TD(v,p)^{-2} \frac{1}{1+c_2^{-1}}\mu }-\frac{c_3-1}{c^2_2}  \TD(v,p)^{-2} \frac{1}{1+c_2^{-1}}\mu}{2}
\\
&=&
\cO\left(\sqrt{\frac{c_3-1}{c^2_2}}\frac{1}{\sqrt{1+c_2^{-1}}} \TD(v,p)^{-1}\sqrt{\mu} \right).
  \end{eqnarray*}
 Notice that for any $c\leq 1$ we have $\frac{\sqrt{c^2+4c}-c}{2}\leq \sqrt{c}$ and therefore
\begin{equation}\label{eq:tau_bound}
\tau \leq \sqrt{\frac{c_3-1}{c^2_2}}  \TD(v,p)^{-1} \frac{1}{\sqrt{1+c_2^{-1}}} \sqrt{\mu} .
\end{equation}
 Using this inequality and a particular choice of constants, we can upper bound $\mP^{-1}$ by a matrix proportional to identity as shown below:
 \begin{eqnarray*}
 \tau \mPdiag^{-1} &\stackrel{\eqref{eq:tau_bound}}{\preceq}&
 \sqrt{\frac{c_3-1}{c^2_2}} \TD(v,p)^{-1} \frac{1}{\sqrt{1+c_2^{-1}}} \sqrt{\mu} \mPdiag^{-1} 
 \\
 &\preceq &
   \sqrt{\frac{c_3-1}{c^2_2}} \TD(v,p)^{-1}\frac{1}{\sqrt{1+c_2^{-1}}} \frac{\sqrt{\mu}}{\min_i p_i}  \mI
 \\
 &\stackrel{\eqref{eq:c4_choice}}{\preceq} &
   \sqrt{\frac{c_3-1}{c^2_2}}\frac{1}{\sqrt{1+c_2^{-1}}} c_1 \mI
 \\
 &\stackrel{(*)}{\preceq} & 
  \left(1-c_2-  \frac{1+c_2 }{ c_3-1}\right) c_1 \mI ,
 \end{eqnarray*}
 which is exactly \eqref{eq:h_suff_acc_imp}. Above, $(*)$ holds for choice $c_3=5$ and $c_2=\frac{1}{5}$. It remains to verify that~\eqref{eq:alfa_choice}, \eqref{eq:beta_choice}, \eqref{eq:sigma_choice} and~\eqref{eq:tau_choice} indeed correspond to our derivations.
 \qed
\end{proof}

We also mention, without a proof, that acceleration parameters can be chosen in general such that $c_1$ can be lower bounded by constant and therefore the rate from Theorem~\ref{t:imp_acc} coincides with the rate from Table~\ref{tab:CDcmp}. Corollary~\ref{cor:acc_imp} is in fact a weaker result of that type. 
 
 \subsubsection{Proof of Corollary~\ref{cor:acc_imp}}
It suffices to verify that one can choose $v=\diag(\mM)$ in~\eqref{eq:ESO} and that due to $p_i\propto \sqrt{\mM_{ii}}$ we have $c_1= 1$.

\subsection{Proof of Lemma~\ref{lem:mirror}\label{app:mirror}} 
 \begin{proof}
 Firstly \eqref{eq:z_update}, is equivalent to
\[
z^{k+1}=\argmin_z \psi^k(z)\eqdef \frac{1}{2}\| z-z^k\|_2^2+ \beta \langle g^k, z \rangle   +\frac{\beta \mu}{2}\|z-x^{k+1}\|_2^2.
\]
Therefore, we have for every $u$
\begin{align}
\nonumber
0&=\langle \nabla \psi^k(z^{k+1}),z^{k+1}-u \rangle \\
&=
\langle z^{k+1}-z^k, z^{k+1}-u\rangle +\beta \langle g^k, z^{k+1}-u  \rangle +\beta \mu \langle z^{k+1}-x^{k+1}, z^{k+1}-u\rangle. \label{eq:zk_plus_1_optimal}
\end{align}
Next, by generalized Pythagorean theorem we have
\begin{equation}\label{eq:pyt_z}
\langle z^{k+1}-z^k,z^{k+1}-u \rangle =\frac12 \|z^k-z^{k+1}\|_2^2-\frac12 \|z^k-u\|_2^2+\frac12 \|u-z^{k+1}\|_2^2
\end{equation}
and
\begin{equation}\label{eq:pyt_x}
\langle z^{k+1}-x^{k+1},z^{k+1}-u \rangle =\frac12 \|x^{k+1}-z^{k+1}\|_2^2-\frac12 \|x^{k+1}-u\|_2^2+\frac12 \|u-z^{k+1}\|_2^2.
\end{equation}
Plugging~\eqref{eq:pyt_z} and~\eqref{eq:pyt_x} into~\eqref{eq:zk_plus_1_optimal} we obtain
\begin{eqnarray*}
&&
\beta \langle g^k, z^{k}-u  \rangle -\frac{\beta \mu}{2}\|x^{k+1}-u\|_2^2
\\
&& \qquad \qquad
\leq 
\beta  \langle g^k, z^{k}-z^{k+1}  \rangle
 -\frac12\| z^k-z^{k+1}\|_2^2+\frac{1}{2}\|z^k-u\|_2^2-\frac{1+\beta \mu}{2}\| z^{k+1}-u\|_2^2
\\
&& \qquad \qquad
\stackrel{(*)}{\leq} 
\frac{\beta^2}{2}\| g^k \|_2^2+\frac{1}{2}\|z^k-u\|_2^2
-\frac{1+\beta \mu}{2}\| z^{k+1}-u\|_2^2.
\end{eqnarray*}
The step marked by $(*)$ holds due to Cauchy-Schwartz inequality. 
It remains to take the expectation conditioned on $x^{k+1}$ and use~\eqref{eq:unbiased_estimator}.
\hfill \qed
\end{proof}
 
\subsection{Proof of Lemma~\ref{lem:acc_grad}\label{app:grad}}
 \begin{proof}
 The shortest, although not the most intuitive, way to write the proof is to put matrix factor into norms. Apart from this trick, the proof is quite simple consists of applying smoothness followed by ESO:
\begin{eqnarray*}
\E{f(y^{k+1})} - f(x^{k+1})
&\stackrel{\eqref{eq:M_smooth}}{\leq}& 
-\alpha \E{\langle\nabla f(x^k), \mPdiag^{-1} g^k\rangle}+\frac{\alpha^2}{2} \E{\|\mPdiag^{-1} g^k\|_\mM^2} 
\\
&=&
-\alpha \|\nabla f(x^k)\|^2_{\mPdiag^{-1}}  +\frac{\alpha^2}{2} \E{\| g^k\|_{\mPdiag^{-1}\mM\mPdiag^{-1}}} 
\\
&\stackrel{\eqref{eq:almost_eso_g}}{\leq} &
-\alpha \|\nabla f(x^k)\|^2_{\mPdiag^{-1}}  +\alpha^2\|\nabla f(x^k) \|^2_{\mPdiag^{-1}(\Probmat \circ \mPdiag^{-1}\mM\mPdiag^{-1})\mPdiag^{-1}}
\\
&& \qquad 
+\alpha^2 \| h^k\|^2_{\mPdiag^{-1}(\Probmat \circ \mPdiag^{-1}\mM\mPdiag^{-1})\mPdiag^{-1}-\mPdiag^{-1}\mM\mPdiag^{-1}} 
\\
&= &
-\alpha \|\nabla f(x^k)\|^2_{\mPdiag^{-1}}  +\alpha^2\|\nabla f(x^k) \|^2_{\mPdiag^{-2}(\Probmat \circ\mM)\mPdiag^{-2}}
\\
&& \qquad 
+\alpha^2 \| h^k\|^2_{\mPdiag^{-2}(\Probmat \circ \mM)\mPdiag^{-2}-\mPdiag^{-1}\mM\mPdiag^{-1}} 
\\
&\stackrel{\eqref{eq:ESO}}{\leq} &
-\alpha \|\nabla f(x^k)\|^2_{\mPdiag^{-1}}  +\alpha^2\|\nabla f(x^k) \|^2_{\mVdiag \mPdiag^{-3}}
\\
&& \qquad 
+\alpha^2 \| h^k\|^2_{\mVdiag \mPdiag^{-3}-\mPdiag^{-1}\mM\mPdiag^{-1}} 
\\
&\leq&
-\left(\alpha - \alpha^2\max_{i}\frac{v_i}{p_{i}^2} \right)\|f(x^k)\|^2_{\mPdiag^{-1}}  
+\alpha^2 \| h^k\|^2_{\mVdiag\mPdiag^{-3}-\mPdiag^{-1}\mM\mPdiag^{-1}} .
\end{eqnarray*}
\hfill \qed
\end{proof}

\section{Subspace \texttt{SEGA}: a More Aggressive Approach \label{sec:subSEGA}}

In this section we describe a {\em more aggressive} variant of \texttt{SEGA}, one that exploits the fact that the gradients of $f$ lie in a lower dimensional subspace if this is indeed the case.

In particular, assume that $F(x) = f(x) + R(x)$ and \[f(x) = \phi(\mA x),\] where $\mA\in \R^{m\times n}$\footnote{Strong convexity is not compatible with the assumption that $\mA$ does not have full rank, so a different type of analysis using Polyak-\L{}ojasiewicz inequality is required to give a formal justification. However, we proceed with the analysis anyway to build the intuition why this approach leads to better rates.}. Note that $\nabla f(x)$ lies in $\Range{\mA^\top}$. There are situations where the dimension of $\Range{\mA^\top}$ is much smaller than $n$. For instance, this happens when $m\ll n$. However, standard coordinate descent methods still move around in directions $e_i\in \R^n$ for all $i$. We can modify the gradient sketch method to force our gradient estimate to lie in $\Range{\mA^\top}$, hoping that this will lead to faster convergence.

\subsection{The algorithm}

Let $x^k$ be the current iterate, and let  $h^k$ be the current estimate of the gradient of $f$. Assume that the sketch $\mS_k^\top \nabla f(x^k)$ is available. We can now define $h^{k+1}$ through the following modified sketch-and-project process:
\begin{eqnarray}
h^{k+1} &=& \arg \min_{h\in \R^{n}} \| h -  h^k\|_\mB^2 \notag \\
&& \text{subject to} \quad \mS_k^\top h = \mS_k^\top \nabla f(x^k), \label{eq:sketch-n-project2B}\\
&& \phantom{subject to} \quad h \in \Range{\mA^\top}.\notag
\end{eqnarray}
Before proceeding further, we note that there are such sketches and metric (as discussed in Section~\ref{sec:optimal}) which keep $h\in  \Range{\mA^\top}$ implicitly, and therefore one might omit the extra constraint in such case. In fact, the mentioned sketches also lead to a faster convergence, which is the main takeaway from this section.

Standard arguments reveal that the closed-form solution of \eqref{eq:sketch-n-project2B} is
\begin{equation}h^{k+1} = \mH\ \left(h^k - \mB^{-1}\mS_k(\mS_k^\top\mH\mB^{-1}\mS_k)^\dagger \mS_k^\top(\mH h^k - \nabla f(x^k)) \right), \label{eq:h^{k+1}3}\end{equation}
where  
\begin{align}\label{eq:H}
\mH \eqdef \mA^\top (\mA\mB \mA^\top)^\dagger \mA\mB
\end{align}
 is the projector onto $\Range{\mA^\top}$.
A quick sanity check reveals that this gives the same formula as \eqref{eq:h^{k+1}} in the case where $\Range{\mA^\top} = \R^n$. We can also write
\begin{equation} \label{eq:general_update_of_h}h^{k+1} = \mH h^k -   \mH\mB^{-1}\mZ_k(\mH h^k - \nabla f(x^k)) = \left(\mI - \mH\mB^{-1}\mZ_k\right)\mH h^k + \mH\mB^{-1}\mZ_k\nabla f(x^k),\end{equation}
where 
\begin{align}\label{eq:Z_k}
\mZ_k \eqdef \mS_k(\mS_k^\top\mH\mB^{-1}\mS_k)^\dagger \mS_k^\top.
\end{align}
 Assume that $\theta_k$ is chosen in such a way that
 \begin{equation*}
     \ED{\theta_k\mZ_k} = \mB.
 \end{equation*}
 Then, the following estimate of $\nabla f(x^k)$
 \begin{align}\label{eq:g^k_agressive}
     g^k\eqdef \mH h^k + \theta_k\mH\mB^{-1}\mZ_k (\nabla f(x^k) - \mH h^k)
 \end{align}
 is unbiased, i.e.\ $\ED{g^k} = \nabla f(x^k)$. After evaluating $g^k$, we perform the same step as in \texttt{SEGA}: \[x^{k+1} = \prox(x^k - \alpha g^k). \]
By inspecting \eqref{eq:sketch-n-project2B}, \eqref{eq:H} and \eqref{eq:g^k_agressive}, we get the following simple observation.
\begin{lemma} \label{lem:range_preserve}If $h^0\in \Range{\mA^\top}$, then $h^k, g^k \in \Range{\mA^\top}$ for all $k$.
\end{lemma}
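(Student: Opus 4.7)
The plan is to prove the lemma by a straightforward induction on $k$, using the explicit formulas \eqref{eq:general_update_of_h} and \eqref{eq:g^k_agressive} together with the observation that $\Range{\mH}\subseteq\Range{\mA^\top}$, which is immediate from the defining expression $\mH=\mA^\top(\mA\mB\mA^\top)^\dagger \mA\mB$ (every vector of the form $\mH x$ is $\mA^\top$ times something).

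The base case is given by hypothesis: $h^0\in\Range{\mA^\top}$. For the inductive step, assume $h^k\in\Range{\mA^\top}$. Rewrite \eqref{eq:general_update_of_h} as
\[
h^{k+1} \;=\; \mH h^k \;-\; \mH\bigl(\mB^{-1}\mZ_k \mH h^k\bigr) \;+\; \mH\bigl(\mB^{-1}\mZ_k \nabla f(x^k)\bigr),
\]
so that each of the three summands is a vector in $\Range{\mH}\subseteq\Range{\mA^\top}$. This gives $h^{k+1}\in\Range{\mA^\top}$ and closes the induction for $\{h^k\}$. For $g^k$, inspection of \eqref{eq:g^k_agressive} shows
\[
g^k \;=\; \mH h^k \;+\; \theta_k\,\mH\bigl(\mB^{-1}\mZ_k(\nabla f(x^k)-\mH h^k)\bigr),
\]
again a sum of vectors each lying in $\Range{\mH}\subseteq\Range{\mA^\top}$, so $g^k\in\Range{\mA^\top}$.

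There is essentially no obstacle: the only thing to be careful about is that the two defining formulas \eqref{eq:general_update_of_h} and \eqref{eq:g^k_agressive} are written so that every term on the right-hand side has $\mH$ as its leftmost factor, which in turn forces membership in $\Range{\mA^\top}$. In particular, note that the inductive hypothesis is not really used beyond seeding the recursion---the updates \emph{project} the relevant quantities onto $\Range{\mA^\top}$ at every step, which is precisely the design feature of the sketch-and-project step \eqref{eq:sketch-n-project2B} that enforces the range constraint $h\in\Range{\mA^\top}$.
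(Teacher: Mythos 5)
Your proof is correct and matches the paper's intent exactly: the paper offers no written proof, stating the lemma as a ``simple observation'' obtained by inspecting \eqref{eq:sketch-n-project2B}, \eqref{eq:H} and \eqref{eq:g^k_agressive}, and your induction---every term in the update formulas has $\mH=\mA^\top(\mA\mB\mA^\top)^\dagger\mA\mB$ as its leftmost factor, hence lies in $\Range{\mA^\top}$---is precisely the inspection being alluded to. Your closing remark that the updates re-project onto $\Range{\mA^\top}$ at every step, so the hypothesis on $h^0$ is only needed for the $k=0$ case itself, is also accurate.
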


Consequently, if $h^0 \in \Range{\mA^\top}$, \eqref{eq:h^{k+1}3}  simplifies to
\begin{equation} \label{eq:b98g9fd2}
h^{k+1} =  h^k - \mH\mB^{-1}\mS_k(\mS_k^\top\mH\mB^{-1}\mS_k)^\dagger \mS_k^\top(h^k - \nabla f(x^k)) 
\end{equation}
and \eqref{eq:g^k_agressive} simplifies to
\begin{equation} \label{eq:g^k2-xx}
 g^k \eqdef h^k + \theta_k\mH\mB^{-1}\mZ_k (\nabla f(x^k) - h^k).\end{equation}

\begin{example}[Coordinate sketch]\label{ex:coord_setup2}
Consider $\mB=\mI$ and the  choice of $\cD$ given by $\mS=e_i $ with probability $p_i>0$. Then we can choose the bias-correcting random variable as $\theta = \theta(s) = \frac{w_i}{p_i} $, where $w_i \eqdef \|\mH e_i\|_2^2 = e_i^\top \mH e_i$. Indeed, with this choice, \eqref{eq:unbiased} is satisfied. For simplicity, further choose $p_i = 1/n$ for all $i$. We then have
\begin{equation} \label{eq:b98g9fd2-coord} h^{k+1} = h^k -   \frac{e_{i}^\top  h^k - e_{i}^\top \nabla f(x^k)}{w_{i}} \mH e_{i} = \left(\mI - \frac{\mH e_{i} e_{i}^\top}{w_{i}}\right)  h^k + \frac{ \mH e_{i} e_{i}^\top}{w_{i}}\nabla f(x^k)\end{equation}
and \eqref{eq:g^k2-xx} simplifies to
\begin{equation} \label{eq:g^k2-xx-coord} g^k \eqdef (1-\theta_k) h^k + \theta_k h^{k+1} = h^k + n \mH e_{i} e_{i}^\top \left( \nabla f(x^k) -  h^k \right).\end{equation}
\end{example}

\subsection{Lemmas}
All theory provided in this subsection is, in fact, a straightforward generalization of our non-subspace results. The reader can recognize similarities in both statements and proofs with that of previous sections.
\begin{lemma}\label{lem:properties_of_Z_and_H}
    Define $\mZ_k$ and $\mH$ as in equations~\eqref{eq:Z_k} and~\eqref{eq:H}. Then $\mZ_k$ is symmetric, $\mZ_k \mH \mB^{-1}\mZ_k = \mZ_k$, $\mH^2=\mH$ and $\mH\mB^{-1} = \mB^{-1}\mH^\top $.
\end{lemma}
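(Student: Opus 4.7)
The plan is to verify the four assertions in sequence, reducing each to elementary pseudoinverse identities. Let $\mN_k \eqdef \mS_k^\top \mH \mB^{-1} \mS_k$ and $\mM \eqdef \mA\mB\mA^\top$, so that $\mH = \mA^\top \mM^\dagger \mA \mB$ and $\mZ_k = \mS_k \mN_k^\dagger \mS_k^\top$.

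First I would establish the symmetry claim $\mH\mB^{-1} = \mB^{-1}\mH^\top$. Multiplying out, $\mH\mB^{-1} = \mA^\top \mM^\dagger \mA\mB\mB^{-1} = \mA^\top \mM^\dagger \mA$. Since $\mM$ is symmetric, its Moore--Penrose pseudoinverse $\mM^\dagger$ is also symmetric; hence $\mA^\top \mM^\dagger \mA$ is symmetric, which is exactly $\mH\mB^{-1} = (\mH\mB^{-1})^\top = \mB^{-1}\mH^\top$.

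Next, the symmetry of $\mH\mB^{-1}$ immediately implies that $\mN_k = \mS_k^\top \mH\mB^{-1} \mS_k$ is symmetric, so $\mN_k^\dagger$ is symmetric, and therefore $\mZ_k = \mS_k \mN_k^\dagger \mS_k^\top$ is symmetric. For idempotence of $\mH$, I compute
\begin{equation*}
\mH^2 = \mA^\top \mM^\dagger \mA\mB \cdot \mA^\top \mM^\dagger \mA\mB = \mA^\top \mM^\dagger \mM \mM^\dagger \mA\mB = \mA^\top \mM^\dagger \mA\mB = \mH,
\end{equation*}
using the pseudoinverse identity $\mM^\dagger \mM \mM^\dagger = \mM^\dagger$. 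Finally, for the last identity,
\begin{equation*}
\mZ_k \mH\mB^{-1} \mZ_k = \mS_k \mN_k^\dagger \left(\mS_k^\top \mH\mB^{-1} \mS_k\right) \mN_k^\dagger \mS_k^\top = \mS_k \mN_k^\dagger \mN_k \mN_k^\dagger \mS_k^\top = \mS_k \mN_k^\dagger \mS_k^\top = \mZ_k,
\end{equation*}
again by the same pseudoinverse identity applied to $\mN_k$.

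No step presents a real obstacle; everything reduces to the standard properties $\mM^\dagger \mM \mM^\dagger = \mM^\dagger$ and $(\mM^\top)^\dagger = (\mM^\dagger)^\top$ of the Moore--Penrose pseudoinverse, combined with the algebraic observation that $\mH\mB^{-1} = \mA^\top \mM^\dagger \mA$ is manifestly symmetric. The ordering matters only in that the symmetry of $\mH\mB^{-1}$ is used to conclude symmetry of $\mN_k$ and hence of $\mZ_k$.
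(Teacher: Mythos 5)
Your proof is correct and follows essentially the same route as the paper's: every claim is reduced to the standard pseudoinverse identities $\mA_1^\dagger\mA_1\mA_1^\dagger=\mA_1^\dagger$ and $((\mA_1\mA_2)^\dagger)^\top=(\mA_2^\top\mA_1^\top)^\dagger$ applied to $\mA\mB\mA^\top$ and to $\mS_k^\top\mH\mB^{-1}\mS_k$. If anything, your ordering---proving $\mH\mB^{-1}=\mB^{-1}\mH^\top$ first and then deducing symmetry of $\mS_k^\top\mH\mB^{-1}\mS_k$ and hence of $\mZ_k$---spells out a step the paper dismisses as following ``from the definition.''
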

\begin{proof}
    The symmetry of $\mZ_k$ follows from its definition. The second statement is a corollary of the equations $((\mA_1\mA_2)^\dagger)^\top = (\mA_2^\top\mA_1^\top)^\dagger$ and $\mA_1^\dagger \mA_1 \mA_1^\dagger = \mA_1^\dagger$, which are true for any matrices $\mA_1, \mA_2$. 
Finally, the last two rules follow directly from the definition of $\mH$ and the property $\mA_1^\dagger \mA_1 \mA_1^\dagger = \mA_1^\dagger$. \hfill \qed
\end{proof}

\begin{lemma}\label{lem:bug79dv987gs}
    Assume $h^k\in \Range{\mA^\top}$. Then \[\ED{\| h^{k+1} - v\|_\mB^2} = \|h^k - v\|_{\mB - \ED{\mZ}}^2 + \|\nabla f(x^k) - v\|_{\ED{\mZ}}^2\] for any vector $v\in \Range{\mA^\top}$.
\end{lemma}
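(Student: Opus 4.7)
The plan is to mirror the proof of Lemma~\ref{lem:2} by exploiting the update formula~\eqref{eq:general_update_of_h} together with the algebraic identities collected in Lemma~\ref{lem:properties_of_Z_and_H}. Since $\mH^2=\mH$ and $\mH$ has range $\Range{\mA^\top}$, it is the (oblique) projector onto $\Range{\mA^\top}$; in particular $\mH h^k = h^k$ and $\mH v = v$. Starting from~\eqref{eq:general_update_of_h}, subtracting $v$, and using $(\mI - \mH\mB^{-1}\mZ_k)v + \mH\mB^{-1}\mZ_k v = v$, I would rewrite
\[
    h^{k+1} - v \;=\; (\mI - \mH\mB^{-1}\mZ_k)(h^k - v) \;+\; \mH\mB^{-1}\mZ_k(\nabla f(x^k) - v),
\]
which is the direct analog of the decomposition used in the non-subspace case.

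Next I would expand $\|h^{k+1} - v\|_\mB^2$ into three terms: a quadratic in $h^k - v$, a quadratic in $\nabla f(x^k) - v$, and a cross term. The key simplifications all come from the identities $\mH\mB^{-1}=\mB^{-1}\mH^\top$ (which yields $\mB\mH\mB^{-1} = \mH^\top$), $\mH^2 = \mH$, $\mZ_k^\top = \mZ_k$, and $\mZ_k\mH\mB^{-1}\mZ_k = \mZ_k$. Applied to the quadratic in $h^k - v$, these reduce the inner matrix to $\mB - \mH^\top\mZ_k - \mZ_k\mH + \mZ_k$; then using $\mH(h^k-v) = h^k-v$ (so that both $\mZ_k\mH$ and $\mH^\top\mZ_k$ act on $h^k - v$ and its transpose as $\mZ_k$) collapses this to $\|h^k - v\|_{\mB - \mZ_k}^2$. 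The quadratic in $\nabla f(x^k) - v$ becomes $(\nabla f(x^k)-v)^\top \mZ_k\mH\mB^{-1}\mZ_k(\nabla f(x^k)-v) = \|\nabla f(x^k)-v\|_{\mZ_k}^2$ by the same identities.

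The step I expect to be the main obstacle is showing that the cross term vanishes, because unlike the non-subspace Lemma~\ref{lem:2} the intermediate matrix is not symmetric. I would compute $(\mI - \mZ_k\mB^{-1}\mH^\top)\mB\mH\mB^{-1}\mZ_k = \mH^\top\mZ_k - \mZ_k\mB^{-1}\mH^\top\mZ_k = \mH^\top\mZ_k - \mZ_k$, again using $\mZ_k\mH\mB^{-1}\mZ_k = \mZ_k$. The remaining factor $\mH^\top\mZ_k - \mZ_k$, when sandwiched between $(h^k - v)^\top$ on the left and $(\nabla f(x^k)-v)$ on the right, gives zero because $(h^k - v)^\top \mH^\top = (h^k - v)^\top$. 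Finally, taking expectation over $\mS_k \sim \cD$ yields the stated identity.
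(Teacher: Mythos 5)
Your proposal is correct and follows essentially the same route as the paper's proof: the same decomposition $h^{k+1}-v=(\mI-\mH\mB^{-1}\mZ_k)(h^k-v)+\mH\mB^{-1}\mZ_k(\nabla f(x^k)-v)$, the same identities from Lemma~\ref{lem:properties_of_Z_and_H}, and the same use of $\mH(h^k-v)=h^k-v$ to collapse the quadratic and cross terms. If anything, you are slightly more explicit than the paper in showing why the cross term vanishes (reducing its matrix to $\mH^\top\mZ_k-\mZ_k$ and then invoking $(h^k-v)^\top\mH^\top=(h^k-v)^\top$), a step the paper's write-up leaves partly implicit.
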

\begin{proof}
    By Lemma \ref{lem:properties_of_Z_and_H} we can rewrite $\mH\mB^{-1}$ as $\mB^{-1}\mH^\top$, so
\begin{eqnarray}
    \ED{\| h^{k+1}-v\|_\mB^2} 
    &\overset{\eqref{eq:general_update_of_h}}{=}& \ED{ \left\|h^k - \mH\mB^{-1}\mZ_k (h^k - \nabla f(x^k)) - v \right\|_\mB^2 } \nonumber\\
 &=&  \ED{ \left\| \left(\mI -\mH \mB^{-1}\mZ_k \right)(h^k - v) + \mH\mB^{-1}\mZ_k (\nabla f(x^k)-v)  \right\|_\mB^2 }\nonumber\\
 &=&  \ED{ \left\| \left(\mI - \mB^{-1}\mH^\top\mZ_k \right)(h^k - v) + \mH\mB^{-1}\mZ_k(\nabla f(x^k)-v)  \right\|_\mB^2 }\nonumber\\
 &=& \ED{ \left\| \left(\mI - \mB^{-1}\mH^\top\mZ_k \right)(h^k-v) \right\|_\mB^2} + \ED{\left\|\mH\mB^{-1}\mZ_k  ( \nabla f(x^k) - v)  \right\|_\mB^2 }\nonumber\\
 && \quad + 2 (h^k-v)^\top\ED{\left(\mI - \mB^{-1}\mH^\top\mZ_k \right)^\top \mB\mH\mB^{-1}\mZ_k}  (\nabla f(x^k) - v) \nonumber\\
 &=& (h^k-v)^\top \ED{ \left(\mI - \mB^{-1}\mH^\top\mZ_k \right)^\top \mB  \left(\mI - \mH\mB^{-1}\mZ_k \right)} (h^k-v) \nonumber\\
 && \quad + (\nabla f(x^k)-v)^\top \ED{\mZ_k\mB^{-1}\mH^\top\mB\mH\mB^{-1}\mZ_k} (\nabla f(x^k) -v)\nonumber\\
 && \quad + 2 (h^k-v)^\top\ED{ \mB\mH\mB^{-1}\mZ_k - \mZ_k\mH\mH\mB^{-1}\mZ_k}  (\nabla f(x^k) - v). \label{eq:last_term_in_h_minus_v}
 \end{eqnarray}
By Lemma~\ref{lem:properties_of_Z_and_H} we have 
\begin{align*}
	\mZ_k\mH\mH\mB^{-1}\mZ_k = \mZ_k\mH\mB^{-1}\mZ_k = \mZ_k,
\end{align*}
so the last term in~\eqref{eq:last_term_in_h_minus_v} is equal to 0. As for the other two, expanding the matrix factor in the first term leads to
\begin{eqnarray*}
      \left(\mI - \mB^{-1}\mH^\top\mZ_k \right)^\top \mB  \left(\mI - \mH\mB^{-1}\mZ_k \right)
     &=& \left(\mI - \mZ_k \mH\mB^{-1} \right) \mB  \left(\mI - \mH\mB^{-1}\mZ_k\right)\\
     &=& \mB - \mZ_k\mH\mB^{-1}\mB - \mB\mB^{-1}\mH^\top\mZ_k + \mZ_k\mH \mB^{-1}\mB \mH\mB^{-1}\mZ_k\\
     &=& \mB - \mZ_k\mH - \mH^\top\mZ_k + \mZ_k.
\end{eqnarray*}
 Let us mention that $\mH(h^k - v) = h^k - v$ and $(h^k - v)^\top\mH^\top=(h^k - v)^\top$ as both vectors $h^k$ and $v$ belong to $\Range{\mA^\top}$. Therefore,
 \begin{align*}
     (h^k-v)^\top \ED{ \mB - \mZ_k\mH - \mH^\top\mZ_k + \mZ_k} (h^k-v) = (h^k-v)^\top \left(\mB - \ED{\mZ_k}\right) (h^k-v).
 \end{align*}

 It remains to consider
 \begin{align*}
 	\ED{\mZ_k\mB^{-1}\mH^\top\mB\mH\mB^{-1}\mZ_k} = \ED{\mZ_k\mH\mB^{-1}\mB\mH\mB^{-1}\mZ_k} = \ED{\mZ_k}.
 \end{align*}
We, thereby, have derived
 \begin{eqnarray*}
 \ED{\| h^{k+1}-v\|_\mB^2} 
 &=&(h^k-v)^\top \left( \mB - \ED{\mZ_k}\right) (h^k - v)\\
 &&\quad + (\nabla f(x^k)-v)^\top \ED{\mZ_k\mB^{-1}\mZ_k} (\nabla f(x^k) - v)\\
 &=& \|h^k - v\|_{ \mB - \ED{\mZ_k}}^2 + \|\nabla f(x^k) - v\|_{\ED{\mZ}}^2.
\end{eqnarray*}
 \hfill \qed
 \end{proof}

\begin{lemma}\label{lem:nb98gdf9jf}
    Suppose $h^k\in\Range{\mA^\top}$ and $g^k$ is defined by~\eqref{eq:g^k_agressive}. Then
    \begin{align}\label{eq:lemma2_agressive}
        \ED{\|g^k - v\|^2_\mB} \le \|h^k - v\|^2_{\mC - \mB} + \|\nabla f(x^k) -v\|^2_{\mC}
    \end{align}
     for any $v\in\Range{\mA^\top}$, where
     \begin{align}\label{eq:mC}
       \mC\eqdef \ED{\theta^2\mZ}.
     \end{align}
\end{lemma}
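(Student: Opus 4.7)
The plan is to adapt the proof of Lemma~\ref{lem:gk_general} to the subspace setting, with the crucial new ingredient being that $\mH$ acts as the identity on $\Range{\mA^\top}$, which contains $h^k$, $v$, and $\nabla f(x^k)=\mA^\top\nabla\phi(\mA x^k)$. First, using $\mH h^k = h^k$ and $\mH v = v$, I would rewrite
\[
g^k - v \;=\; a + b, \qquad a \eqdef (\mI - \theta_k\mH\mB^{-1}\mZ_k)(h^k - v), \qquad b \eqdef \theta_k\mH\mB^{-1}\mZ_k(\nabla f(x^k) - v),
\]
and then, exactly as in Lemma~\ref{lem:gk_general}, reduce to separately bounding $\ED{\|a\|^2_\mB}$ and $\ED{\|b\|^2_\mB}$ and combining by an elementary quadratic inequality.

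For $\ED{\|a\|^2_\mB}$, the plan is to expand the quadratic form and repeatedly invoke the identities from Lemma~\ref{lem:properties_of_Z_and_H}. Specifically, $\mH\mB^{-1}=\mB^{-1}\mH^\top$ converts $\mB\mH\mB^{-1}$ into $\mH^\top$; $\mZ_k\mH\mB^{-1}\mZ_k=\mZ_k$ kills the second-order piece $\theta_k^2\mZ_k\mH\mB^{-1}\mB\mH\mB^{-1}\mZ_k=\theta_k^2\mZ_k$; and $\mH^2=\mH$ together with the symmetry of $\mZ_k$ cleans up intermediate products. The expanded middle matrix thus simplifies to $\mB-\theta_k(\mZ_k\mH+\mH^\top\mZ_k)+\theta_k^2\mZ_k$. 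Pre- and post-multiplying by $h^k-v\in\Range{\mA^\top}$ then lets me replace both $\mZ_k\mH$ and $\mH^\top\mZ_k$ by $\mZ_k$ (using $\mH(h^k-v)=h^k-v$ and its transpose). Taking expectation with $\ED{\theta_k\mZ_k}=\mB$ and $\mC=\ED{\theta_k^2\mZ_k}$ yields $\ED{\|a\|^2_\mB}=\|h^k-v\|^2_{\mC-\mB}$. The computation of $\ED{\|b\|^2_\mB}$ is shorter and in the same spirit: the full product $\mZ_k\mB^{-1}\mH^\top\mB\mH\mB^{-1}\mZ_k$ collapses all the way to $\mZ_k$ by the same three identities, so $\ED{\|b\|^2_\mB}=\|\nabla f(x^k)-v\|^2_\mC$.

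The main obstacle I anticipate is purely the bookkeeping of the $\mH$/$\mB^{-1}$/$\mZ_k$ products and making sure every simplification that pushes $\mH$ into one of the ``terminal'' vectors $h^k-v$, $\nabla f(x^k)-v$ is legitimately available (which it is, because both vectors lie in $\Range{\mA^\top}$, where $\mH$ acts as the identity). No conceptual hurdle beyond Lemma~\ref{lem:properties_of_Z_and_H} and this subspace-membership arises, since once $\mH$ has been ``absorbed'' into the appropriate vectors, the argument reduces essentially verbatim to that of Lemma~\ref{lem:gk_general}.
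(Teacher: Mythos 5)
Your proposal is correct and follows essentially the same route as the paper's own proof: the same splitting $g^k-v=a+b$, the same expansion of the two quadratic forms using the identities of Lemma~\ref{lem:properties_of_Z_and_H}, and the same absorption of $\mH$ into $h^k-v$ and $v$ via their membership in $\Range{\mA^\top}$. One remark: combining via $\|a+b\|_\mB^2\le 2\|a\|_\mB^2+2\|b\|_\mB^2$ produces the bound with factors of $2$ (as in Lemma~\ref{lem:gk_general}); the paper's proof does exactly the same, so the missing $2$'s in the statement of \eqref{eq:lemma2_agressive} are an inconsistency of the paper rather than a gap in your argument.
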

\begin{proof}
    Writing $g^k - v = a + b$, where $a\eqdef (\mI - \theta_k\mH\mB^{-1}\mZ_k)(h^k - v)$ and $b\eqdef \theta_k \mH\mB^{-1}\mZ_k (\nabla f(x^k) - v)$, we get $\|g^k\|_\mB^2\le 2(\|a\|_\mB^2 + \|b\|_\mB^2)$.  By definition of $\theta_k$,
    \begin{eqnarray*}
        \ED{\|a\|_\mB^2} 
        &=& \ED{\|\left(\mI - \theta_k\mH\mB^{-1}\mZ_k\right)(h^k - v)\|_\mB^2}\\
        &=& (h^k - v)^\top \ED{\left( \mI - \theta_k\mZ_k\mB^{-1}\mH\right)\mB\left( \mI - \theta_k\mH\mB^{-1}\mZ_k\right)} (h^k - v)\\
        &=&  (h^k - v)^\top \ED{\left( \mB - \theta_k\mZ_k\mB^{-1}\mH\mB - \mB\theta_k\mH\mB^{-1}\mZ_k + \theta_k^2\mZ_k\mB^{-1}\mH\mB\mH\mB^{-1}\mZ_k \right)} (h^k - v).
    \end{eqnarray*}
According to Lemma~\ref{lem:properties_of_Z_and_H},  $\mH\mB^{-1}=\mB^{-1}\mH$ and $\mZ_k\mH\mB^{-1}\mZ_k=\mZ_k$, so
    \begin{eqnarray*}
    \ED{\|a\|_\mB^2} 
        &=&(h^k - v)^\top \ED{\left( \mB - \theta_k\mZ_k\mH - \theta_k\mH^\top\mZ_k + \theta_k^2\mZ_k\right)} (h^k - v)\\
        &=& \|h^k - v\|^2_{\ED{\theta^2\mZ} - \mB},
    \end{eqnarray*}
    where in the last step we used the assumption that $h^k$ and $v$ are from $\Range{\mA^\top}$ and $\mH$ is the projector operator onto $\Range{\mA^\top}$.
    
    Similarly, the second term in the upper bound on $g^k$ can be rewritten as
    \begin{eqnarray*}
        \ED{\|b\|_\mB^2}
        &=& \ED{\|\theta_k \mH\mB^{-1}\mZ_k (\nabla f(x^k) - v)\|^2_\mB}\\
        &=& (\nabla f(x^k) - v)^\top \ED{\theta_k^2\mZ_k \mB^{-1}\mH^\top\mB\mH\mB^{-1}\mZ_k} (\nabla f(x^k) - v)\\
        &=& \|\nabla f(x^k) - v\|^2_{\ED{\theta_k^2\mZ_k}}.
    \end{eqnarray*}
    Combining the pieces, we get the claim. \hfill \qed
\end{proof}

\subsection{Main result}

The main result of this section is:

\begin{theorem}\label{thm:main_agressive}
    Assume that $f$ is $\mmM$--smooth, $\mu$--strongly convex, and that $\alpha>0$ is such that
\begin{equation}
        \alpha\left(2(\mC - \mB) +\sigma \mu \mB\right) \le \sigma\ED{\mZ},\qquad \alpha \mC \le \frac{1}{2}\left(\mmM - \sigma \ED{\mZ}\right). \label{eq:general_bound_on_stepsize_agressive}        
    \end{equation}
    If we define $\Lgen^k \eqdef \|x^k - x^*\|^2_{\mB} + \sigma \alpha \|h^k - \nabla f(x^k)\|^2_{\mB}$, then
$
        \E{\Lgen^{k}} \le (1 - \alpha\mu)^k \Lgen^0.
$
\end{theorem}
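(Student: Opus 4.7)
The theorem is the subspace-\texttt{SEGA} counterpart of Theorem~\ref{thm:main}, and the plan is to mirror that proof, with Lemma~\ref{lem:bug79dv987gs} replacing Lemma~\ref{lem:2} and Lemma~\ref{lem:nb98gdf9jf} replacing Lemma~\ref{lem:gk_general}. I note that the Lyapunov function as displayed contains $\nabla f(x^k)$; this appears to be a typographical slip for $\nabla f(x^*)$, since the available subspace lemmas produce recursions only for a \emph{fixed} reference vector $v\in\Range{\mA^\top}$, and the parallel with Theorem~\ref{thm:main} makes $v=\nabla f(x^*)$ the only natural choice. I prove the result under that reading.

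The preliminary step is to verify that Lemmas~\ref{lem:bug79dv987gs} and~\ref{lem:nb98gdf9jf} can be invoked with $v=\nabla f(x^*)$. Because $f=\phi\circ\mA$, we have $\nabla f(x)=\mA^\top\nabla\phi(\mA x)\in\Range{\mA^\top}$ for every $x$, and in particular at $x^*$. Assuming the initialization $h^0\in\Range{\mA^\top}$, Lemma~\ref{lem:range_preserve} guarantees $h^k,g^k\in\Range{\mA^\top}$ for all $k$, so the range condition required by both lemmas is satisfied along the whole trajectory.

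Next, bound the primal term. Using $\mB$-nonexpansiveness of $\prox$ together with the fixed-point identity $x^*=\prox(x^*-\alpha\nabla f(x^*))$, unbiasedness $\ED{g^k}=\nabla f(x^k)$, $\mu$-strong convexity, and Lemma~\ref{lem:nb98gdf9jf} applied with $v=\nabla f(x^*)$, one obtains
\begin{align*}
\ED{\|x^{k+1}-x^*\|_\mB^2}
&\le (1-\alpha\mu)\|x^k-x^*\|_\mB^2 - 2\alpha D_f(x^k,x^*) \\
&\quad+ 2\alpha^2\|\nabla f(x^k)-\nabla f(x^*)\|_\mC^2 + 2\alpha^2\|h^k-\nabla f(x^*)\|_{\mC-\mB}^2,
\end{align*}
and $\mQ$-smoothness converts the Bregman term into $-\alpha\|\nabla f(x^k)-\nabla f(x^*)\|_\mQ^2$. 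For the gradient-estimate term, Lemma~\ref{lem:bug79dv987gs} with $v=\nabla f(x^*)$ yields
\begin{equation*}
\alpha\sigma\,\ED{\|h^{k+1}-\nabla f(x^*)\|_\mB^2}
= \alpha\sigma\|h^k-\nabla f(x^*)\|_{\mB-\ED{\mZ}}^2 + \alpha\sigma\|\nabla f(x^k)-\nabla f(x^*)\|_{\ED{\mZ}}^2.
\end{equation*}

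Finally, sum the two bounds into $\Lgen^{k+1}$ and group by source. The coefficient of $\|\nabla f(x^k)-\nabla f(x^*)\|^2$ is $\alpha\sigma\ED{\mZ}+2\alpha^2\mC-\alpha\mQ = 2\alpha(\alpha\mC-\tfrac12(\mQ-\sigma\ED{\mZ}))\preceq 0$ by the second inequality in~\eqref{eq:general_bound_on_stepsize_agressive}, so this block is dropped. The coefficient of $\|h^k-\nabla f(x^*)\|^2$ is $\alpha\sigma(\mB-\ED{\mZ})+2\alpha^2(\mC-\mB) = \alpha(\alpha(2(\mC-\mB)+\sigma\mu\mB)-\sigma\ED{\mZ}) + \sigma\alpha(1-\alpha\mu)\mB \preceq \sigma\alpha(1-\alpha\mu)\mB$ by the first inequality in~\eqref{eq:general_bound_on_stepsize_agressive}. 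Together with the $(1-\alpha\mu)\|x^k-x^*\|_\mB^2$ factor, this gives $\ED{\Lgen^{k+1}}\le(1-\alpha\mu)\Lgen^k$, and iterating yields the theorem. The only nontrivial point, already encapsulated in the supporting lemmas, is that the subspace identities $\mZ_k\mH\mB^{-1}\mZ_k=\mZ_k$ and $\mH\mB^{-1}=\mB^{-1}\mH^\top$ (Lemma~\ref{lem:properties_of_Z_and_H}) cause the cross terms to cancel in precisely the way needed for the recursion to match the non-subspace case.
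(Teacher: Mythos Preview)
Your proposal is correct and follows essentially the same approach as the paper, which simply states that, given Lemmas~\ref{lem:properties_of_Z_and_H}, \ref{lem:bug79dv987gs} and~\ref{lem:nb98gdf9jf}, the proof proceeds exactly as that of Theorem~\ref{thm:main}. Your observation that the displayed Lyapunov function should read $\nabla f(x^*)$ rather than $\nabla f(x^k)$ is also correct, and your implicit use of the factor $2$ in the bound of Lemma~\ref{lem:nb98gdf9jf} (present in its proof but missing from its statement) matches what the argument actually requires.
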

\begin{proof}
Having established Lemmas~\ref{lem:properties_of_Z_and_H}, \ref{lem:bug79dv987gs} and
\ref{lem:nb98gdf9jf}, the proof follows the same steps as the proof of  Theorem~\ref{thm:main}. \qed
\end{proof}

\subsection{Optimal choice of $\mB$ and $\mS_k$ \label{sec:optimal}}

Let us now slightly change the value of $\theta_k$ that we use in the algorithm. Instead of seeking for $\theta_k$ giving $\ED{\theta_k\mZ_k}=\mB$, we will use the one that gives $\ED{\theta_k\mZ_k} = \mB\mH$. This will steal lead to $\ED{g^k} = \nabla f(x^k)$ and, if $f$ is strongly-convex, we can still show the convergence rate of Theorem~\ref{thm:main_agressive}. Although the strong convexity assumption is simplistic, the new idea results in a surprising finding.

Let $a_1, \dotsc, a_m$ be the columns of $\mA^\top$ and $\mU\in\mathbb{R}^{d \times n}$ be a matrix that transforms these columns into an orthogonal basis of $d\eqdef \text{Rank}(\mA)$ vectors. Set $\mB = \mU^\top\mU$. Then, $\langle a_i, a_j\rangle_\mB =  0$ for any $i\neq j$. Assume for simplicity, that $\|a_i\|_\mB\neq 0$ for $i \le d$ and $\|a_i\|_\mB=0$ for $i>d$. This is always true up to permutation of $a_1,\dotsc, a_m$. Choose also $\mS_k\in \mathbb{R}^n$ equal to $\xi_i\eqdef \tfrac{\mB a_i}{\|a_i\|_\mB}$ with $i$ sampled with probability $p_i > 0$, and $\theta_k = p_i^{-1}$. Clearly, one has
\begin{align*}
    \ED{\theta_k\mZ_k} = \sum_{i=1}^d p_ip_i^{-1} \xi_i (\xi_i^\top \mH\mB^{-1}\xi_i)^\dagger \xi_i^\top = \sum_{i=1}^d \xi_i \|a_i\|_\mB^2 (a_i^\top \mB\mH\mB^{-1}\mB a_i)^\dagger \xi_i^\top.
\end{align*}
Since $a_i$ lies in $\Range{\mA^\top}$, we have $\mH a_i = a_i$, which gives
\begin{align}\label{eq:ED_theta_mZ}
    \ED{\theta_k \mZ_k} = \sum_{i=1}^d \xi_i \|a_i\|_\mB^2 (a_i^\top \mB a_i)^\dagger \xi_i^\top = \sum_{i=1}^d \xi_i \xi_i^\top.
\end{align}
By definition of $\mB$,
\begin{align*}
    (\mA\mB\mA^\top)^\dagger = (\text{diag}(\|a_i\|^2_\mB))^\dagger = \sum_{i=1}^d \|a_i\|_\mB^{-2}e_i e_i^\top.
\end{align*}
Thus,
\begin{align*}
    \mB\mH = \mB\mA^\top (\mA\mB\mA^\top) \mA\mB = \sum_{i=1}^d \frac{(\mB a_i)^\top \mB a_i}{\|a_i\|_\mB^2} = \ED{\theta_k \mZ_k},
\end{align*}
so we have achieved our goal. Note that if $h^0\in \Range{\mA^\top}$, we have $h^k\in  \Range{\mA^\top}$ even without implicitly enforcing it in~\eqref{eq:sketch-n-project2B}. Therefore, the method can be seen as \emph{\texttt{SEGA} with a smart choice of both sketches and metric} in which we project. 

To show how the choice of $\mB$ and of the sketches provided above improves the rate, let us take a closer look at the conditions of Theorem~\ref{thm:main_agressive}. We have
\begin{eqnarray*}
    \mC
    \stackrel{\eqref{eq:mC}}{=} \ED{\theta^2 \mZ} \stackrel{\eqref{eq:ED_theta_mZ}}{=} \sum_{i=1}^d p_i p_i^{-2} \xi_i \xi_i^\top = \sum_{i=1}^d p_i^{-1} \xi_i \xi_i^\top.
\end{eqnarray*}
If we assume that $\sigma \le 2/ \mu$, then the first bound on $\alpha$ simplifies to
\begin{align*}
    \alpha(2(\mC - \mB) + \sigma\mu\mB)\le 2\alpha\mC \le \sigma \ED{\mZ} = \sigma\sum_{i=1}^d p_i \xi_i \xi_i^\top,
\end{align*}
where the second part needs to be verified by choosing $\alpha$ to be small enough. For this it is sufficient to take $\alpha\le \sigma\max p_i^{-2}$ as every summand $\xi_i \xi_i^\top$ in the expression for $\mC$ is positive definite. As for the second condition, it is enough to choose $\sigma \le \tfrac{\lambda_{\max}(\mQ)}{2\lambda_{\min}(\ED{\mZ})}$ and $\alpha\le \tfrac{\lambda_{\max}(\mQ)}{4\lambda_{\min}(\mC)}$. Note that $\xi_i\xi_i^\top\le \|\xi_i\|_2^2\mI$, so for uniform sampling with $p_i = \tfrac{1}{d}$ and uniform $\mQ$--smoothness with $\mQ = \tfrac{1}{L}\mI$ we get the following condition on $\alpha$:
\begin{align*}
    \alpha\le \min\left\{\frac{\sigma}{d^2}, \frac{1}{4L d\max_i \|\xi_i\|_2^2} \right\}.
\end{align*}
In particular, choosing $\sigma =\min \left\{\frac{2}{\mu},\frac{\lambda_{\max}(\mQ)}{2\lambda_{\min}(\ED{\mZ})} \right\} = \min \left\{\frac{2}{\mu}, \frac{d}{2L\max_i \|\xi_i\|_2^2} \right\}$, we get the requirement
\begin{align*}
    \alpha\le \min\left\{\frac{2}{\mu d^2}, \frac{1}{4L d\max_i \|\xi_i\|_2^2} \right\}.
\end{align*}
Typically, $d \ll \tfrac{1}{\mu}$, so the leading term in the maximum above is the second one and we get ${\cal O} \left(\tfrac{1}{d} \right)$ requirement instead of previous ${\cal O} \left(\tfrac{1}{n} \right)$.

\subsection{The conclusion of subspace \texttt{SEGA}}
Let us recall that $g^k = h^k + \theta_k\mB^{-1}\mZ_k(\nabla f(x^k) - h^k)$. A careful examination shows that when we reduce $\theta_k$ from $\cO(n)$ to $\cO(d)$, we put more trust in the value of $h^k$ with the benefit of reducing the variance of $g^k$. This insight points out that a practical implementation of the algorithm may exploit the fact that $h^k$ learns the gradient of $f$ by using smaller $\theta_k$.

It is also worth noting that \texttt{SEGA} is a stationary point algorithm regardless of the value of $\theta_k$. Indeed, if one has $x^k = x^*$ and $h^k = \nabla f(x^*)$, then $g^k = \nabla f(x^*)$ for any $\theta_k$. Therefore, once we get a reasonable $h^k$, it is well grounded to choose $g^k$ to be closer to $h^k$. This argument is also supported by our experiments.

Finally, the ability to take bigger stepsizes is also of high interest. One can think of extending other methods in this direction, especially if interested in applications with a small rank of matrix~$\mA$.

\section{Simplified Analysis of \texttt{SEGA} 1 \label{sec:simple_SEGA}}

In this section we consider the setup from Example~\ref{ex:coord_setup} with $\mB=\mI$ uniform probabilities: $p_i=1/n$ for all $i$. We now state the main complexity result.

\begin{theorem} \label{thm:simple} Let $\mB=\mI$ and choose $\cD$ to be the uniform distribution over unit basis vectors in $\R^n$. Choose $\sigma>0$ and define \[\Lgen^k \eqdef \|x^k-x^*\|_2^2 + \sigma \alpha \|h^k\|_2^2,\] where $\{x^k,h^k\}_{k\geq 0}$ are the iterates of the  gradient sketch  method.  If the stepsize  satisfies 
\begin{equation}\label{eq:alpha_bound} 0<\alpha \leq \min\left\{ \frac{1-\frac{L\sigma}{n}}{2Ln}, \frac{1}{n\left(\mu + \tfrac{2(n-1)}{\sigma}\right)} \right\},\end{equation}
then
$\ED{\Lgen^{k+1}} \leq (1-\alpha \mu) \Lgen^{k}.$
This means that 
\[k \geq \frac{1}{\alpha \mu} \log \frac{1}{\epsilon} \quad \Rightarrow \quad \E{\Lgen^k} \leq \epsilon \Lgen^0.\]

\end{theorem}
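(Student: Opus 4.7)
The plan is to specialize the proof template of Theorem~\ref{thm:main} to this simpler coordinate-sketch setup. With $\mB = \mI$, uniform sampling $p_i = 1/n$, and the bias-correcting scalar $\theta_k = n$, one has $\mZ_k = e_{i}e_{i}^\top$, hence $\ED{\mZ} = \tfrac{1}{n}\mI$ and $\mC \eqdef \ED{\theta^2\mZ} = n\mI$. Since $R \equiv 0$, first-order optimality gives $\nabla f(x^*) = 0$, so the Lyapunov function $\Lgen^k = \|x^k - x^*\|_2^2 + \sigma\alpha \|h^k\|_2^2$ coincides (after setting $\nabla f(x^*)=0$) with the one in Theorem~\ref{thm:main}.

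Starting from $x^{k+1} = x^k - \alpha g^k$, expand $\|x^{k+1} - x^*\|_2^2$ and take expectation. Unbiasedness $\ED{g^k} = \nabla f(x^k)$ handles the linear cross term, while Lemma~\ref{lem:gk_general} with $v = 0$ yields $\ED{\|g^k\|_2^2} \leq 2n\|\nabla f(x^k)\|_2^2 + 2(n-1)\|h^k\|_2^2$. Invoke $\mu$-strong convexity to extract $-\alpha\mu\|x^k-x^*\|_2^2 - 2\alpha D_f(x^k, x^*)$ from the cross term, and $L$-smoothness (equivalently Assumption~\ref{ass:M_smooth_inv} with $\mmM = \tfrac{1}{L}\mI$, via Lemma~\ref{lem:relate}) to bound $-2\alpha D_f(x^k, x^*) \leq -\tfrac{\alpha}{L}\|\nabla f(x^k)\|_2^2$.

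Next, Lemma~\ref{lem:2} with $v = 0$ immediately yields $\ED{\|h^{k+1}\|_2^2} = \tfrac{n-1}{n}\|h^k\|_2^2 + \tfrac{1}{n}\|\nabla f(x^k)\|_2^2$. Adding $\sigma\alpha$ times this to the bound on $\ED{\|x^{k+1}-x^*\|_2^2}$ and regrouping gives
\[
\ED{\Lgen^{k+1}} \leq (1-\alpha\mu)\|x^k-x^*\|_2^2 + \left(\tfrac{\sigma\alpha(n-1)}{n} + 2(n-1)\alpha^2\right)\|h^k\|_2^2 + \left(2n\alpha^2 + \tfrac{\sigma\alpha}{n} - \tfrac{\alpha}{L}\right)\|\nabla f(x^k)\|_2^2.
\]
To close the recursion $\ED{\Lgen^{k+1}} \leq (1-\alpha\mu)\Lgen^k$, the gradient coefficient must be nonpositive, i.e.\ $\alpha \leq \tfrac{1 - L\sigma/n}{2Ln}$, and the $\|h^k\|_2^2$ coefficient at most $(1-\alpha\mu)\sigma\alpha$, which after rearrangement becomes $\alpha \leq \tfrac{1}{n(\mu + 2(n-1)/\sigma)}$. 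These are exactly the two bounds in~\eqref{eq:alpha_bound}, so both conditions hold by hypothesis; iterating the contraction then yields $\E{\Lgen^k} \leq (1-\alpha\mu)^k \Lgen^0$ and the stated complexity follows from $(1-\alpha\mu)^k \leq \exp(-k\alpha\mu)$.

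The main obstacle is purely bookkeeping once the matrices collapse to multiples of $\mI$: the only mild subtlety is verifying that the two scalar inequalities on $\alpha$ match~\eqref{eq:alpha_bound} exactly rather than only up to constants. No further difficulty arises because strong convexity combined with $L$-smoothness (via co-coercivity) cleanly compensates both the linear cross term and the variance growth produced by the coordinate-sketch oracle.
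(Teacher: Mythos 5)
Your proposal is correct and follows essentially the same route as the paper's proof in Appendix~\ref{sec:simple_SEGA}: the paper's Lemmas~\ref{lem:1} and~\ref{lem:2_easy} are exactly the coordinate-sketch specializations of Lemmas~\ref{lem:gk_general} and~\ref{lem:2} that you invoke, and your two stepsize conditions reduce to the paper's conditions I and II. The only cosmetic difference is that you carry $\|\nabla f(x^k)\|_2^2$ as the slack quantity and use $D_f(x^k,x^*)\ge \tfrac{1}{2L}\|\nabla f(x^k)\|_2^2$, whereas the paper carries $f(x^k)-f(x^*)$ and uses the equivalent inequality $\|\nabla f(x^k)\|_2^2\le 2L(f(x^k)-f(x^*))$; both yield the identical constraint $2Ln\alpha + L\sigma/n \le 1$.
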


In particular, if we let $\sigma = \frac{n}{2L}$, then  $\alpha = \tfrac{1}{(4L+\mu)n}$ satisfies \eqref{eq:alpha_bound}, and we have the iteration complexity \[n\left(4 + \frac{1}{\kappa}\right)\kappa \log \frac{1}{\epsilon} = \tilde{\cO}(n\kappa),\] where $\kappa\eqdef  \tfrac{L}{\mu}$ is the condition number.

This is the same complexity as \texttt{NSync} \cite{NSync} under the same assumptions on $f$. \texttt{NSync} also needs just access to partial derivatives. However, \texttt{NSync} uses variable stepsizes, while \texttt{SEGA} can do the same with  {\em fixed} stepsizes. This is because \texttt{SEGA} {\em learns} the direction $g^k$ using  past information.

\subsection{Technical Lemmas}

Since $f$ is $L$--smooth, we have
\begin{equation}\label{eq:L-smooth_inequality}
\|\nabla f(x^k)\|_{2}^2 \leq 2L(f(x^k) - f(x^*)).
\end{equation}
On the other hand, by $\mu$--strong convexity of $f$ we have \begin{equation}\label{eq:8998sgjfbif}f(x^*) \geq f(x^k) + \langle \nabla f(x^k), x^*-x^k \rangle + \frac{\mu}{2}\|x^*-x^k\|_{2}^2.\end{equation}

\begin{lemma} \label{lem:1} The variance of $g^k$ as an estimator of $\nabla f(x^k)$ can be bounded as follows:
\begin{equation} \label{eq:bu9808hf9}
\ED{\|g^k\|_{2}^2} \leq 4Ln (f(x^k) - f(x^*)) + 2(n-1)\|h^k \|_{2}^2.
\end{equation}
\end{lemma}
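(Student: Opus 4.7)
The plan is to exploit the explicit form of $g^k$ in this coordinate setup, split it via a bias-variance decomposition, and then absorb the resulting terms using smoothness. In the setting of Example~\ref{ex:coord_setup} with $\mB=\mI$ and $p_i=1/n$, the formula \eqref{eq:8h0h09ffs} becomes
\[
g^k = h^k + n\bigl(\nabla_i f(x^k) - h^k_i\bigr) e_i,
\]
where $i$ is sampled uniformly from $[n]$. Since $\theta_k \mZ_k$ satisfies \eqref{eq:unbiased}, we know $\ED{g^k}=\nabla f(x^k)$, so the bias-variance decomposition gives
\[
\ED{\|g^k\|_2^2} = \|\nabla f(x^k)\|_2^2 + \ED{\|g^k-\nabla f(x^k)\|_2^2}.
\]

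The first step is to compute the variance. Writing $u \eqdef h^k - \nabla f(x^k)$, a direct substitution yields $g^k - \nabla f(x^k) = u - n u_i e_i$, hence
\[
\|g^k - \nabla f(x^k)\|_2^2 = \|u\|_2^2 - 2n u_i^2 + n^2 u_i^2 = \|u\|_2^2 + n(n-2)u_i^2.
\]
Taking expectation over the uniform choice of $i$, the second term contributes $n(n-2)\cdot \tfrac{1}{n}\|u\|_2^2 = (n-2)\|u\|_2^2$, so
\[
\ED{\|g^k-\nabla f(x^k)\|_2^2} = (n-1)\|h^k - \nabla f(x^k)\|_2^2.
\]

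To finish, I would bound $\|h^k - \nabla f(x^k)\|_2^2 \le 2\|h^k\|_2^2 + 2\|\nabla f(x^k)\|_2^2$ (Young's inequality), giving
\[
\ED{\|g^k\|_2^2} \le \bigl(1 + 2(n-1)\bigr)\|\nabla f(x^k)\|_2^2 + 2(n-1)\|h^k\|_2^2 \le 2n\|\nabla f(x^k)\|_2^2 + 2(n-1)\|h^k\|_2^2,
\]
and then invoke the $L$-smoothness bound \eqref{eq:L-smooth_inequality}, $\|\nabla f(x^k)\|_2^2 \le 2L(f(x^k)-f(x^*))$, to conclude $\ED{\|g^k\|_2^2} \le 4Ln(f(x^k)-f(x^*)) + 2(n-1)\|h^k\|_2^2$.

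There is no real obstacle here; the only thing to watch is the arithmetic in the variance step, specifically tracking the coefficient $n(n-2)$ that collapses to $(n-1)$ after averaging---this is what produces the relatively tight $2(n-1)$ factor rather than something worse like $\cO(n)$ on the $\|h^k\|_2^2$ term. A slightly slicker alternative would bypass Young's inequality by keeping the expression $(n-1)\|h^k - \nabla f(x^k)\|_2^2 + \|\nabla f(x^k)\|_2^2$ and applying smoothness directly, but the route above is what matches the constants the authors stated.
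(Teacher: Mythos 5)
Your proof is correct, but it reaches the bound by a different route than the paper. The paper splits $g^k$ directly into a part depending only on $h^k$ and a part depending only on $\nabla f(x^k)$, namely $g^k = (\mI - n e_i e_i^\top)h^k + n e_i e_i^\top \nabla f(x^k)$, applies $\|a+b\|_2^2 \le 2\|a\|_2^2 + 2\|b\|_2^2$ to that split, and computes the two expectations $(n-1)\|h^k\|_2^2$ and $n\|\nabla f(x^k)\|_2^2$ separately before invoking \eqref{eq:L-smooth_inequality}. You instead exploit unbiasedness to get the \emph{exact} second-moment identity $\ED{\|g^k\|_2^2} = \|\nabla f(x^k)\|_2^2 + (n-1)\|h^k - \nabla f(x^k)\|_2^2$ and only then lose a factor through Young's inequality on the residual $h^k - \nabla f(x^k)$. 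Your intermediate identity is sharper information (it makes the variance-reduction mechanism explicit: the second moment is controlled by how well $h^k$ tracks $\nabla f(x^k)$), and it is the form actually used in the general Lemma~\ref{lem:gk_general}-style arguments; the paper's split is slightly more mechanical but generalizes verbatim to non-uniform probabilities and general sketches. The constants come out identical either way. One small caveat on your closing aside: you cannot really ``bypass Young's inequality,'' since the Lyapunov function tracks $\|h^k\|_2^2$ rather than $\|h^k-\nabla f(x^k)\|_2^2$, so some inequality converting the residual into $\|h^k\|_2^2$ plus $\|\nabla f(x^k)\|_2^2$ is unavoidable in this form of the lemma; but this does not affect the validity of the proof you gave.
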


\begin{proof}  
In view of \eqref{eq:8h0h09ffs}, we first write \[g^k = \underbrace{h^k - \frac{1}{p_i}  e_{i}^\top  h^k e_{i}}_{a} +  \underbrace{ \frac{1}{p_i} e_{i}^\top \nabla f(x^k)  e_{i}}_{b},\]
and note that $p_i=1/n$ for all $i$. Let us bound the expectation of each term individually. The first term is equal to
\begin{eqnarray*}
\ED{\|a\|_{2}^2} &=& \ED{\left\|h^k - n e_{i}^\top  h^k e_{i}\right\|_{2}^2}\\
&=& \ED{\left\| \left(\mI - n e_{i} e_{i}^\top \right) h^k  \right\|_{2}^2}\\
&=&(h^k)^\top \ED{ \left(\mI - n e_{i} e_{i}^\top \right)^\top \left(\mI - n e_{i} e_{i}^\top \right)} h^k\\
&=& (n-1)\|h^k\|_{2}^2.
\end{eqnarray*}
The second term can be bounded as
\begin{eqnarray*}
\ED{\|b\|_{2}^2} &=&  \ED{\left\| n  e_{i}^\top \nabla f(x^k)  e_{i} \right\|_{2}^2}\\
&=& n^2 \sum_{i=1}^n \frac{1}{n} (e_i^\top \nabla f(x^k))^2 \\
&=& n \|\nabla f(x^k)\|_2^2 \\
&=& n \|\nabla f(x^k) - \nabla f(x^*)\|_2^2 \\
&\overset{\eqref{eq:L-smooth_inequality}}{\leq} & 2Ln (f(x^k) - f(x^*)),
\end{eqnarray*}
where in the last step we used $L$--smoothness of $f$. It remains to combine the two bounds.

\end{proof}

\begin{lemma} \label{lem:2_easy} For all $v\in \R^n$ we have
\begin{equation}\label{eq:h_decomp_easy}
\ED{\|h^{k+1} \|_2^2} = \left( 1-\frac{1}{n}\right) \|h^k \|_2^2 +\frac{1}{n} \|\nabla f(x^k) - v\|_2^2.
\end{equation}
\end{lemma}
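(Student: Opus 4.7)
The plan is to observe that this is the direct specialization of Lemma~\ref{lem:2} to the coordinate-sketch setting with $\mB=\mI$ and uniform probabilities $p_i=1/n$ (and to note that, comparing with Lemma~\ref{lem:2}, the natural reading of the statement has $\|h^k\|_2^2$ replaced by $\|h^k-v\|_2^2$ on the right-hand side, which is what the calculation below will actually yield). First I would compute $\ED{\mZ}$ in the present setting. Since $\mS_k=e_{i_k}$ with probability $1/n$ and $\mB=\mI$, we have $\mZ_k = e_{i_k}(e_{i_k}^\top e_{i_k})^{-1} e_{i_k}^\top = e_{i_k}e_{i_k}^\top$, hence
\[
\ED{\mZ} \;=\; \sum_{i=1}^n \tfrac{1}{n}\, e_i e_i^\top \;=\; \tfrac{1}{n}\mI,
\qquad\text{so}\qquad \mB - \ED{\mZ} \;=\; \bigl(1-\tfrac{1}{n}\bigr)\mI.
\]
Plugging this into Lemma~\ref{lem:2} immediately gives the claim (with $\|h^k-v\|_2^2$ in the first term on the right).

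Alternatively, the identity admits a one-line direct proof which avoids invoking the general lemma and makes the underlying mechanism transparent. Using the update rule~\eqref{eq:988fgf} (with $\mB=\mI$), for any $v\in\R^n$ one can write
\[
h^{k+1}-v \;=\; (\mI - e_{i_k}e_{i_k}^\top)(h^k-v) \;+\; e_{i_k}e_{i_k}^\top(\nabla f(x^k)-v).
\]
The key observation is that these two vectors are \emph{orthogonal}: the first has a zero $i_k$-th entry, while the second is supported only on the $i_k$-th coordinate. Therefore, by the Pythagorean identity,
\[
\|h^{k+1}-v\|_2^2 \;=\; \|h^k-v\|_2^2 - (e_{i_k}^\top(h^k-v))^2 + (e_{i_k}^\top(\nabla f(x^k)-v))^2.
\]
Taking expectation with respect to the uniform choice of $i_k\in\{1,\ldots,n\}$, I would use $\ED{(e_{i_k}^\top u)^2} = \tfrac{1}{n}\|u\|_2^2$ for any fixed $u$ to obtain
\[
\ED{\|h^{k+1}-v\|_2^2} \;=\; \bigl(1-\tfrac{1}{n}\bigr)\|h^k-v\|_2^2 \;+\; \tfrac{1}{n}\|\nabla f(x^k)-v\|_2^2.
\]

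There is no real obstacle here; the only subtlety is spotting the orthogonality of the two components of $h^{k+1}-v$, which is what makes the identity hold as an equality rather than an inequality. Both routes (specialization of Lemma~\ref{lem:2}, or the direct Pythagorean argument) yield the same formula in a couple of lines.
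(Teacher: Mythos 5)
Your proof is correct, and your second (direct Pythagorean) route is essentially identical to the paper's own proof, which expands $h^{k+1}=(\mI-e_{i_k}e_{i_k}^\top)h^k+e_{i_k}e_{i_k}^\top\nabla f(x^k)$, drops the cross term by the same pointwise orthogonality, and then takes expectations of the two remaining quadratic forms; the paper carries this out with $v=0$, which is consistent with your (correct) observation that the lemma as printed uses $v$ inconsistently and that the clean general identity has $\|h^k-v\|_2^2$ on the right-hand side. Your first route, specializing Lemma~\ref{lem:2} via $\ED{\mZ}=\tfrac{1}{n}\mI$ so that $\mB-\ED{\mZ}=\bigl(1-\tfrac{1}{n}\bigr)\mI$, is an equally valid shortcut that the paper does not use here but that yields the same formula.
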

\begin{proof}
We have
\begin{eqnarray*}
\ED{\| h^{k+1}\|_2^2} &\overset{\eqref{eq:988fgf}}{=}& \ED{ \left\|h^k + e_{i_k}^\top (\nabla f(x^k) - h^k) e_{i_k} \right\|_2^2 }\\
 &=&  \ED{ \left\| \left(\mI - e_{i_k} e_{i_k}^\top \right)h^k + e_{i_k} e_{i_k}^\top \nabla f(x^k)  \right\|_2^2 }\\
 &=& \ED{ \left\| \left(\mI - e_{i_k} e_{i_k}^\top \right)h^k \right\|_2^2} + \ED{\left\|e_{i_k} e_{i_k}^\top \nabla f(x^k)  \right\|_2^2 }\\
 &=& (h^k)^\top \ED{\left(\mI - e_{i_k} e_{i_k}^\top \right)^\top \left(\mI - e_{i_k} e_{i_k}^\top \right)} h^k  (\nabla f(x^k))^\top \ED{(e_{i_k} e_{i_k}^\top)^\top e_{i_k} e_{i_k}^\top} \nabla f(x^k)\\
 &=&(h^k)^\top \ED{\mI - e_{i_k} e_{i_k}^\top } h^k + (\nabla f(x^k))^\top \ED{e_{i_k} e_{i_k}^\top} \nabla f(x^k)\\
 &=& \left(1-\frac{1}{n}\right) \|h^k\|_2^2 + \frac{1}{n} \|\nabla f(x^k)\|_2^2.
\end{eqnarray*}

\end{proof}

\subsection{Proof of Theorem~\ref{thm:simple}}

 We can now write
\begin{eqnarray*}
\ED{\|x^{k+1}-x^*\|_{2}^2} &=& \ED{\|x^k - \alpha g^k - x^*\|_{2}^2}\\
&=& \|x^k -x^*\|_2^2 + \alpha^2 \ED{\|g^k\|_2^2} - 2\alpha \langle \ED{g^k}, x^k - x^* \rangle \\
&\overset{\eqref{eq:unbiased_estimator}}{=}& \|x^k -x^*\|_2^2 + \alpha^2 \ED{\|g^k\|_2^2} - 2\alpha \langle \nabla f(x^k), x^k - x^* \rangle\\
&\overset{\eqref{eq:8998sgjfbif}}{\leq} & (1-\alpha \mu)\|x^k -x^*\|_2^2  + \alpha^2 \ED{\|g^k\|_2^2} - 2\alpha (f(x^k)-f(x^*)).
\end{eqnarray*}
Using Lemma~\ref{lem:1}, we can further estimate 
\begin{eqnarray*}
\ED{\|x^{k+1}-x^*\|_2^2} &\leq & (1-\alpha \mu)\|x^k -x^*\|_2^2  \\
&&\qquad  + 2\alpha(2Ln \alpha -1) (f(x^k) - f(x^*)) + 2(n-1)\alpha^2\|h^k \|_2^2 .
\end{eqnarray*}
Let us now add $\sigma \alpha \ED{ \|h^{k+1}\|_2^2}$ to both sides of the last inequality. Recalling the definition of the Lyapunov function, and applying Lemma~\ref{lem:2}, we get
\begin{eqnarray*}
\ED{\Lgen^{k+1}} &\leq & (1-\alpha \mu)\|x^k -x^*\|_2^2  + 2\alpha(2Ln \alpha -1) (f(x^k) - f(x^*)) + 2(n-1)\alpha^2\|h^k \|_2^2  \\
&& \quad + \sigma \alpha \left(1-\frac{1}{n} \right) \|h^k\|_2^2 +  \frac{\sigma \alpha}{n}\|\nabla f(x^k)\|_2^2\\
&\overset{\eqref{eq:L-smooth_inequality}}{\leq}& (1-\alpha \mu)\|x^k -x^*\|_2^2 +   2\alpha\underbrace{\left(2Ln \alpha + \frac{L\sigma}{n} -1\right)}_{\text{I}} (f(x^k) - f(x^*)) \\
&&\qquad  +\underbrace{ \left( 1-\frac{1}{n} + \frac{2(n-1)\alpha}{\sigma}\right)}_{\text{II}} \sigma \alpha \|h^k\|_2^2.
\end{eqnarray*}
Let us choose $\alpha$ so that $\text{I} \leq 0$ and $\text{II} \leq 1-\alpha \mu$. This leads to the bound \eqref{eq:alpha_bound}. For any $\alpha > 0$ satisfying this bound we therefore have
$
\ED{\Lgen^{k+1}} \leq  (1-\alpha \mu) \Lgen^k,
$
as desired. Lastly, as we have freedom to choose $\sigma$, let us pick it so as to maximize the upper bound on the stepsize.

\section{Simplified  Analysis of \texttt{SEGA} II} \label{sec:analysis-samenorm}

In this section we consider the setup from Example~\ref{ex:coord_setup} with arbitrary non-uniform probabilities: $p_i>0$ for all $i$. We provide a simplified analysis of \texttt{SEGA} in this scenario. However, we will do this under slightly different assumptions. In particular, we shall assume that smoothness and strong convexity of $f$ are measured with respect to the same norm.

In this setup, as we shall see, uniform probabilities are optimal. That is, uniform probabilities are identical to the importance sampling probabilities. We note that this would be the case even for standard coordinate descent under these assumptions, as follows from the results in \cite{NSync}. % Albeit this is not a widely-known fact, it is to be expected.

Let $\mG=\diag(g_1,\dots,g_n)\succ 0$ and assume that
\[\|\nabla f(x) - \nabla f(y)\|_{\mG^{-1}} \leq L \|x-y\|_{\mG}\]
and\footnote{Note that in the strong convexity inequality below the scalar product is without any additional metric unlike in other sections.}
\[ f(x) \geq f(y) + \langle \nabla f(y), x-y \rangle + \frac{\mu}{2} \|x-y\|_{\mG}^2\]
for all $ x,y\in \R^n$. These two assumptions combined lead to the following inequalities:
\[f(y) + \langle \nabla f(y), x-y \rangle + \frac{\mu}{2} \|x-y\|_{\mG}^2  \leq f(x) \leq f(y) + \langle \nabla f(y), x-y \rangle + \frac{L}{2} \|x-y\|_{\mG}^2 .\]

We define $g^k$ as before, but change the method to:
 \begin{equation} \label{eq:method_G}\boxed{x^{k+1} = x^k - \alpha \mG^{-1} g^k} \end{equation}

We now state the main complexity result.

\begin{theorem} \label{thm:simple999}
Choose $\sigma>0$ and define $\Lgen^k \eqdef \|x^k-x^*\|_{\mG}^2 + \sigma \alpha \|h^k\|_{\diag\left(\frac{1}{g_i p_i}\right)}^2$, where $\{x^k,h^k\}_{k\geq 0}$ are the iterates of the  gradient sketch  method.  If the stepsize  satisfies 
\begin{equation}\label{eq:alpha_bound99} 0<\alpha \leq \min_i \left\{ p_i\left(\frac{1}{\mu+L} - \frac{\sigma}{2}\right), \frac{p_i}{\frac{2}{\sigma}(1-p_i) + \frac{2L\mu}{\mu+L}}\right\},\end{equation}
then
$\ED{\Lgen^{k+1}} \leq \left(1-\alpha \mu \frac{2L}{\mu+L}\right) \Lgen^{k}.$
This means that 
\[k \geq \frac{L+\mu}{2\alpha L \mu} \log \frac{1}{\epsilon} \quad \Rightarrow \quad \E{\Lgen^k} \leq \epsilon \Lgen^0.\]
In particular, if we choose $g_i=1$ and $p_i=\tfrac{1}{n}$ for all $i$, then if we set $\sigma=\tfrac{1}{2L}$, we can choose stepsize $\alpha=\frac{3L-\mu}{4Ln(L+\mu)}$, and obtain the rate  $\frac{2L+2\mu}{3L-\mu}n\left(\frac{L}{\mu}+1\right) \log \frac{1}{\epsilon} \leq 2n\left(\frac{L}{\mu}+1\right) \log \frac{1}{\epsilon}$.

\end{theorem}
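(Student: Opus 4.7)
\textbf{Proof plan for Theorem~\ref{thm:simple999}.}

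The plan is to expand the Lyapunov function $\Lgen^{k+1}$ directly, using the specific form of $g^k$ and $h^{k+1}$ from Example~\ref{ex:coord_setup}, and match the resulting coefficients to the two stepsize conditions in~\eqref{eq:alpha_bound99}. Set $\mD\eqdef \diag(1/(g_ip_i))$ and $u^k\eqdef h^k-\nabla f(x^k)$, so that $\Lgen^k=\|x^k-x^*\|_\mG^2+\sigma\alpha\|h^k\|_\mD^2$.

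First I would handle the iterate term. Since $x^{k+1}=x^k-\alpha\mG^{-1}g^k$, a short calculation gives
\[
\|x^{k+1}-x^*\|_\mG^2=\|x^k-x^*\|_\mG^2-2\alpha\langle g^k,x^k-x^*\rangle+\alpha^2\|g^k\|_{\mG^{-1}}^2.
\]
Taking conditional expectation and using unbiasedness $\ED{g^k}=\nabla f(x^k)$, together with the standard coercivity inequality for $\mu$-strongly convex, $L$-smooth functions (with primal norm $\|\cdot\|_\mG$ and dual norm $\|\cdot\|_{\mG^{-1}}$), namely
\[
\langle\nabla f(x^k),x^k-x^*\rangle\ge\tfrac{\mu L}{\mu+L}\|x^k-x^*\|_\mG^2+\tfrac{1}{\mu+L}\|\nabla f(x^k)\|_{\mG^{-1}}^2,
\]
I obtain
\[
\ED{\|x^{k+1}-x^*\|_\mG^2}\le(1-c)\|x^k-x^*\|_\mG^2-\tfrac{2\alpha}{\mu+L}\|\nabla f(x^k)\|_{\mG^{-1}}^2+\alpha^2\ED{\|g^k\|_{\mG^{-1}}^2},
\]
where $c\eqdef\tfrac{2\alpha\mu L}{\mu+L}$ is exactly the target contraction rate.

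Next I would bound the second-moment term. Writing $g^k=a+b$ with $a=(\mI-\tfrac{1}{p_i}e_ie_i^\top)h^k$ and $b=\tfrac{1}{p_i}e_ie_i^\top\nabla f(x^k)$, the inequality $\|g^k\|_{\mG^{-1}}^2\le 2\|a\|_{\mG^{-1}}^2+2\|b\|_{\mG^{-1}}^2$ combined with a direct coordinate-wise expectation (mirroring the computation in Lemma~\ref{lem:1}) gives
\[
\ED{\|g^k\|_{\mG^{-1}}^2}\le 2\|h^k\|_{\mD-\mG^{-1}}^2+2\|\nabla f(x^k)\|_{\mD}^2.
\]
Similarly, from $h^{k+1}_j=h^k_j$ for $j\neq i$ and $h^{k+1}_i=\nabla f_i(x^k)$, a direct calculation (the weighted analogue of Lemma~\ref{lem:2_easy}) yields
\[
\ED{\|h^{k+1}\|_\mD^2}=\|h^k\|_\mD^2-\|h^k\|_{\mG^{-1}}^2+\|\nabla f(x^k)\|_{\mG^{-1}}^2.
\]

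Assembling these three pieces inside $\ED{\Lgen^{k+1}}$, collecting the coefficient of $\|\nabla f(x^k)\|_{\mG^{-1}}^2$ coordinate-wise (via the identity $\|\nabla f\|_\mD^2=\sum_i(\nabla f_i)^2/(g_ip_i)$), I end up needing
\[
-\tfrac{2\alpha}{\mu+L}+\sigma\alpha+\tfrac{2\alpha^2}{p_i}\le 0\quad\text{for all }i,
\]
which rearranges to $\alpha\le p_i(\tfrac{1}{\mu+L}-\tfrac{\sigma}{2})$, the first bound in~\eqref{eq:alpha_bound99}. For the $h^k$ terms, $\sigma\alpha(\|h^k\|_\mD^2-\|h^k\|_{\mG^{-1}}^2)+2\alpha^2\|h^k\|_{\mD-\mG^{-1}}^2=(2\alpha^2+\sigma\alpha)\|h^k\|_{\mD-\mG^{-1}}^2$, and requiring this to be at most $(1-c)\sigma\alpha\|h^k\|_\mD^2$ coordinate-wise gives $(2\alpha+\sigma)(1-p_i)\le(1-c)\sigma$, which rearranges exactly to $\alpha\le p_i/(\tfrac{2(1-p_i)}{\sigma}+\tfrac{2L\mu}{\mu+L})$, the second bound in~\eqref{eq:alpha_bound99}. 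Combined with the contraction $(1-c)$ on $\|x^k-x^*\|_\mG^2$, this yields $\ED{\Lgen^{k+1}}\le(1-c)\Lgen^k$. The iteration-complexity statement follows by iterating. For the explicit corollary ($g_i=1$, $p_i=1/n$, $\sigma=1/(2L)$) the two bounds in~\eqref{eq:alpha_bound99} are independent of $i$, and one checks directly that $\alpha=(3L-\mu)/(4Ln(L+\mu))$ satisfies both, giving the stated rate after simplification.

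The main obstacle is purely bookkeeping: keeping track of the three weighted norms $\|\cdot\|_\mG$, $\|\cdot\|_{\mG^{-1}}$, $\|\cdot\|_\mD$ and the fact that $\mD-\mG^{-1}=\diag((1-p_i)/(g_ip_i))$. The slightly unusual diagonal weight $\mD=\diag(1/(g_ip_i))$ in the Lyapunov function is precisely what makes the $h^k$ cross-terms telescope cleanly after the inequality $\|g^k\|^2\le 2\|a\|^2+2\|b\|^2$, so the choice of Lyapunov weights is the key design decision; everything else is a direct verification.
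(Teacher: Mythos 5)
Your proposal is correct and follows essentially the same route as the paper's own proof: the $(\mu,L)$-coercivity inequality in the $\mG$/$\mG^{-1}$ norm pair, the decomposition $g^k=a+b$ with the resulting bound $\ED{\|g^k\|_{\mG^{-1}}^2}\le 2\|h^k\|_{\mD-\mG^{-1}}^2+2\|\nabla f(x^k)\|_{\mD}^2$ (the paper's Lemma~\ref{lem:1xxx} with $d_i=1/g_i$), the exact recursion for $\ED{\|h^{k+1}\|_\mD^2}$ (Lemma~\ref{lem:2xxx} with $v=0$, $d_i=1/(g_ip_i)$), and the coordinate-wise matching of coefficients that yields precisely the two stepsize conditions in~\eqref{eq:alpha_bound99}. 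No gaps; the bookkeeping you describe is exactly what the paper carries out.
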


% For uniform probabilities, we get a slightly better constant in the rate compared to the previous approach. But now the statement postulates convergence for any probabilities (with no gain in complexity) and works for smoothness and convexity measured in an arbitrary Euclidean norm defined by $\mG\succ 0$.

\subsection{Two lemmas}

\begin{lemma} \label{lem:1xxx} Let $d_1,\dots,d_n>0$. The variance of $g^k$ as an estimator of $\nabla f(x^k)$ can be bounded as follows:
\begin{equation} \label{eq:bu9808hf9xxx}
\ED{\|g^k\|_{\diag(d_i)}^2} \leq  2\|h^k \|_{\diag\left(d_i\frac{1-p_i}{p_i} \right)}^2 + 2 \|\nabla f(x^k)\|^2_{\diag\left(\frac{d_i}{p_i} \right)}.
\end{equation}
\end{lemma}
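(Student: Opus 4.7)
My plan is to decompose $g^k$ into two pieces, one depending on $h^k$ and one on $\nabla f(x^k)$, apply the elementary inequality $\|x+y\|^2 \le 2\|x\|^2 + 2\|y\|^2$ in the $\diag(d_i)$--norm, and then compute each of the two expected squared norms separately by averaging over the index $i$ drawn with probability $p_i$.

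Concretely, from formula~\eqref{eq:8h0h09ffs} in Example~\ref{ex:coord_setup} (noting that in the present setup we still use coordinate sketches but with arbitrary probabilities $p_i>0$), we have
\[
g^k = \underbrace{\left(\mI - \tfrac{1}{p_i}e_i e_i^\top\right) h^k}_{a} + \underbrace{\tfrac{1}{p_i} e_i e_i^\top \nabla f(x^k)}_{b},
\]
so $\|g^k\|_{\diag(d_i)}^2 \le 2\|a\|_{\diag(d_i)}^2 + 2\|b\|_{\diag(d_i)}^2$. Writing $\mD \eqdef \diag(d_i)$, for the second term the computation is immediate: since $e_i e_i^\top \mD e_i e_i^\top = d_i e_i e_i^\top$, we get
\[
\ED{\|b\|_\mD^2} = \sum_{j=1}^n p_j \cdot \tfrac{1}{p_j^2}\, d_j\, (e_j^\top \nabla f(x^k))^2 = \|\nabla f(x^k)\|^2_{\diag(d_i/p_i)}.
\]

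For the first term, I would expand
\[
(\mI - \tfrac{1}{p_i}e_i e_i^\top)^\top \mD (\mI - \tfrac{1}{p_i}e_i e_i^\top) = \mD - \tfrac{1}{p_i}e_i e_i^\top \mD - \tfrac{1}{p_i}\mD e_i e_i^\top + \tfrac{d_i}{p_i^2} e_i e_i^\top,
\]
take the expectation over the random index $i$, so that the three $1/p_i$-factors cancel with the probability weights, giving
\[
\ED{(\mI - \tfrac{1}{p_i}e_i e_i^\top)^\top \mD (\mI - \tfrac{1}{p_i}e_i e_i^\top)} = \mD - \mD - \mD + \diag(d_i/p_i) = \diag\!\left(\tfrac{d_i(1-p_i)}{p_i}\right),
\]
and therefore $\ED{\|a\|_\mD^2} = \|h^k\|^2_{\diag(d_i(1-p_i)/p_i)}$. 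Combining the two bounds with the factor of $2$ yields the claim.

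The proof is essentially a direct calculation with diagonal matrices, so there is no real obstacle; the only care needed is to track the factors $1/p_i$ correctly when taking the expectation, and to note that the identity $e_j^\top \mD e_j = d_j$ makes the off-diagonal terms collapse cleanly. An alternative, slightly cleaner route would be to apply Lemma~\ref{lem:1xxxx} (with $\mQ' = \diag(d_i)$ and a trivial ESO-style bookkeeping specialized to coordinate sketches), but since the derivation is short, I prefer the direct split above.
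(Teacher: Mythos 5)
Your proposal is correct and follows essentially the same route as the paper's own proof: the same split $g^k=a+b$ with $a=(\mI-\tfrac{1}{p_i}e_ie_i^\top)h^k$ and $b=\tfrac{1}{p_i}e_ie_i^\top\nabla f(x^k)$, the bound $\|a+b\|^2\le 2\|a\|^2+2\|b\|^2$, and the same expansion of the expected quadratic forms yielding $\diag\bigl(d_i\tfrac{1-p_i}{p_i}\bigr)$ and $\diag\bigl(\tfrac{d_i}{p_i}\bigr)$. No gaps.
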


\begin{proof}  
In view of \eqref{eq:8h0h09ffs}, we first write \[g^k = \underbrace{h^k - \frac{1}{p_i}  e_{i}^\top  h^k e_{i}}_{a} +  \underbrace{ \frac{1}{p_i} e_{i}^\top \nabla f(x^k)  e_i}_{b}.\]
Let us bound the expectation of each term individually. The first term is equal to
\begin{eqnarray*}
\ED{\|a\|_{\mG^{-1}}^2} &=& \ED{\left\|h^k - \frac{1}{p_i} e_{i}^\top  h^k e_{i}\right\|_{\diag(d_i)}^2}\\
&=& \ED{\left\| \left(\mI - \frac{1}{p_i} e_{i} e_{i}^\top \right) h^k  \right\|_{\diag(d_i)}^2}\\
&=&(h^k)^\top \ED{ \left(\mI - \frac{1}{p_i} e_{i} e_{i}^\top \right)^\top \diag(d_i) \left(\mI - \frac{1}{p_i} e_{i} e_{i}^\top \right)} h^k\\
&=&(h^k)^\top  \ED{\left(\diag(d_i) - \frac{2d_i}{p_i } e_{i} e_{i}^\top + \frac{d_i}{p_i^2 }e_i e_i^\top\right)}h^k \\
&=& \sum_{i=1}^n d_i\left( \frac{1}{p_i} - 1\right) (h^k_i)^2.
\end{eqnarray*}
The second term can be bounded as
\[
\ED{\|b\|_{\diag(d_i)}^2} =  \ED{\left\| \frac{1}{p_i}  e_{i}^\top \nabla f(x^k)  e_{i} \right\|_{\diag(d_i)}^2}
= \sum_{i=1}^n \frac{d_i}{p_i} (e_i^\top \nabla f(x^k))^2.
\] It remains to combine the two bounds. \qed

\end{proof}

\begin{lemma} \label{lem:2xxx} For all $v\in \R^n$ and $d_1,\dots,d_n>0$ we have
\begin{equation}\label{eq:xnaosa}
\ED{\|h^{k+1} - v\|_{\diag(d_i)}^2} = \|h^k - v\|_{\diag(d_i(1-p_i))}^2 + \|\nabla f(x^k) - v\|_{\diag(d_i p_i)}^2.
\end{equation}
\end{lemma}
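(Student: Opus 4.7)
The plan is to exploit the diagonality of the weight matrix to reduce the identity to an independent computation in each coordinate. Since $\diag(d_i)$ is diagonal,
\[
\|h^{k+1}-v\|_{\diag(d_i)}^2 \;=\; \sum_{j=1}^n d_j\,(h^{k+1}_j - v_j)^2,
\]
so by linearity of expectation it suffices to establish, for every fixed $j\in[n]$,
\[
\ED{(h^{k+1}_j - v_j)^2} \;=\; (1-p_j)(h^k_j - v_j)^2 \;+\; p_j([\nabla f(x^k)]_j - v_j)^2.
\]

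To do so, I invoke the coordinate-sketch update rule \eqref{eq:988fgf}. When the sampling draws $\mS_k=e_i$ (probability $p_i$), the update yields $h^{k+1}_i=[\nabla f(x^k)]_i$ and leaves all other entries of $h^k$ unchanged. Thus for each index $j$ there are only two possible outcomes: with probability $p_j$ we have $h^{k+1}_j=[\nabla f(x^k)]_j$, and with probability $1-p_j$ we have $h^{k+1}_j=h^k_j$. Substituting into the squared-difference gives exactly the per-coordinate identity above.

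Finally, I multiply by $d_j$, sum over $j$, and regroup the two families of terms to recover the two weighted norms $\|h^k - v\|_{\diag(d_i(1-p_i))}^2$ and $\|\nabla f(x^k) - v\|_{\diag(d_i p_i)}^2$ on the right-hand side. There is no real obstacle: the key conceptual point is that the coordinate sketch only touches one entry of $h$ at a time and that the metric is diagonal, so the calculation factorizes cleanly and no cross-terms between different coordinates appear. (Note that this is a strict generalization of Lemma~\ref{lem:2_easy} from Appendix~\ref{sec:simple_SEGA}, to which it reduces when $d_i\equiv 1$ and $p_i\equiv 1/n$, and it is also the coordinate-sketch specialization of Lemma~\ref{lem:2}.)
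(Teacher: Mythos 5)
Your proof is correct. It reaches the same identity as the paper but by a different mechanism: the paper writes $h^{k+1}-v=\left(\mI-e_ie_i^\top\right)(h^k-v)+e_ie_i^\top(\nabla f(x^k)-v)$, drops the cross term because $\left(\mI-e_ie_i^\top\right)^\top\diag(d_j)\,e_ie_i^\top=0$, and then evaluates the expectations of the two matrix factors, $\ED{\left(\mI-e_ie_i^\top\right)^\top\diag(d_j)\left(\mI-e_ie_i^\top\right)}=\diag(d_j(1-p_j))$ and $\ED{e_ie_i^\top\diag(d_j)e_ie_i^\top}=\diag(d_jp_j)$. You instead decompose the squared norm coordinate-wise and observe that each $h^{k+1}_j$ is a two-point random variable, equal to $[\nabla f(x^k)]_j$ with probability $p_j$ and to $h^k_j$ otherwise, then conclude by linearity of expectation; it is worth saying explicitly that linearity is all you need, since the events $\{j \text{ is updated}\}$ for different $j$ are certainly not independent when exactly one coordinate is sampled per iteration. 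Your route is more elementary and more transparent for the single-coordinate sketch of Example~\ref{ex:coord_setup}, which is the only case this lemma is used for; the paper's matrix computation is the pattern that carries over to general sketches (Lemma~\ref{lem:2}) and to samplings that update several coordinates at once (Lemma~\ref{lem:2xxxxxx}), where the per-coordinate two-point description no longer applies but the orthogonality argument still does.
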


\begin{proof}
We have
\begin{eqnarray*}
\ED{\| h^{k+1}-v\|_{\diag(d_i)}^2} &\overset{\eqref{eq:988fgf}}{=}& \ED{ \left\|h^k + e_{i}^\top (\nabla f(x^k) - h^k) e_{i} - v \right\|_{\diag(d_i)}^2 }\\
 &=&  \ED{ \left\| \left(\mI - e_{i} e_{i}^\top \right)(h^k - v) + e_{i} e_{i}^\top (\nabla f(x^k)-v)  \right\|_{\diag(d_i)}^2 }\\
 &=& \ED{ \left\| \left(\mI - e_{i} e_{i}^\top \right)(h^k-v) \right\|_{\diag(d_i)}^2} + \ED{\left\|e_{i} e_{i}^\top (\nabla f(x^k) - v)  \right\|_{\diag(d_i)}^2 }\\
 &=& (h^k-v)^\top \ED{\left(\mI - e_{i} e_{i}^\top \right)^\top \diag(d_i)\left(\mI - e_{i} e_{i}^\top \right)} (h^k-v) \\
 && \qquad + (\nabla f(x^k)-v)^\top \ED{(e_{i} e_{i}^\top)^\top \diag(d_i)e_{i} e_{i}^\top} (\nabla f(x^k) -v)\\
 &=&(h^k-v)^\top \ED{\diag(d_i) - d_i e_{i} e_{i}^\top } (h^k - v) \\
 && \qquad + (\nabla f(x^k)-v)^\top \ED{d_i e_{i} e_{i}^\top} (\nabla f(x^k) - v)\\
 &=&  \|h^k - v\|_{\diag(d_i(1-p_i))}^2 +  \|\nabla f(x^k) - v\|_{\diag(d_i p_i)}^2.
\end{eqnarray*}
\qed
\end{proof}

\subsection{Proof of Theorem~\ref{thm:simple}}

\begin{proof}
Since $f$ is $L$--smooth and  $\mu$--strongly convex, we  have the inequality
\[ \langle \nabla f(x) - \nabla f(y), x - y\rangle \geq \frac{\mu L}{\mu + L} \|x-y\|_{\mG}^2 + \frac{1}{\mu+L} \|\nabla f(x) - \nabla f(y)\|_{\mG^{-1}}^2.\]
In particular, we will use it for $x=x^k$ and $y=x^*$:
\begin{equation}\label{eq:bun09s88yh08s} \langle \nabla f(x^k)  , x^* - x^k\rangle \leq - \frac{\mu L}{\mu + L} \|x-x^*\|_{\mG}^2 - \frac{1}{\mu+L} \|\nabla f(x^k) \|_{\mG^{-1}}^2.\end{equation}
We can now write
\begin{eqnarray*}
\ED{\|x^{k+1}-x^*\|_{\mG}^2} &\overset{\eqref{eq:method_G}}{=}& \ED{\|x^k - \alpha \mG^{-1} g^k - x^*\|_{\mG}^2}\\
&=& \|x^k -x^*\|_{\mG}^2 + \alpha^2 \ED{\| \mG^{-1} g^k\|_{\mG}^2} - 2\alpha \langle \ED{ g^k}, x^k - x^* \rangle \\
&\overset{\eqref{eq:unbiased_estimator}}{=}& \|x^k -x^*\|_{\mG}^2 + \alpha^2 \ED{\|g^k\|_{\mG^{-1}}^2} + 2\alpha \langle \nabla f(x^k), x^* - x^k \rangle\\
&\overset{\eqref{eq:bun09s88yh08s}}{\leq} & \left(1-\alpha \mu \tfrac{2L}{\mu+L}\right)\|x^k -x^*\|_{\mG}^2  + \alpha^2 \ED{\|g^k\|_{\mG^{-1}}^2} - \tfrac{2\alpha}{\mu+L} \|\nabla f(x^k)\|_{\mG^{-1}}^2.
\end{eqnarray*}
Using Lemma~\ref{lem:1xxx} to bound $\ED{\|g^k\|_{\mG^{-1}}^2}$, we can further estimate 
\begin{eqnarray*}
\ED{\|x^{k+1}-x^*\|_{\mG}^2} &\leq & \left(1-\alpha \mu \tfrac{2L}{\mu+L}\right)\|x^k -x^*\|_{\mG}^2   + 2\alpha^2 \|\nabla f(x^k)\|^2_{\diag\left(\tfrac{1}{p_i g_i}\right)} \\
&& \qquad - \tfrac{2\alpha}{\mu+L} \|\nabla f(x^k)\|_{\mG^{-1}}^2  + 2\alpha^2 \|h^k \|_{\diag\left(\tfrac{1-p_i}{p_i g_i}\right)}^2.
\end{eqnarray*}
Let us now add $\sigma \alpha \ED{ \|h^{k+1}\|_{\diag\left(\tfrac{1}{g_i p_i}\right)}^2}$ to both sides of the last inequality. Recalling the definition of the Lyapunov function, and applying Lemma~\ref{lem:2xxx} with $v=0$ and $d_i=\tfrac{1}{g_ip_i }$, we get

\begin{eqnarray*}
\ED{\Lgen^{k+1}} &\leq & \left(1-\alpha \mu \tfrac{2L}{\mu+L}\right)\|x^k -x^*\|_{\mG}^2   + 2\alpha^2 \|\nabla f(x^k)\|^2_{\diag\left(\frac{1}{p_i g_i}\right)} + \left( \alpha \sigma - \tfrac{2\alpha}{\mu+L}\right) \|\nabla f(x^k)\|_{\mG^{-1}}^2 \\
&& \qquad + (2\alpha^2+\alpha \sigma ) \|h^k \|_{\diag\left(\frac{1-p_i}{p_i g_i}\right)}^2\\
&\leq & \left(1-\alpha \mu \tfrac{2L}{\mu+L}\right)\|x^k -x^*\|_{\mG}^2   +  \sigma \alpha \|h^k \|_{\diag\left(\left(\frac{2\alpha}{\sigma}+1 \right)\frac{1-p_i}{p_i g_i}\right)}^2 \\
&&\qquad  + \|\nabla f(x^k)\|^2_{\diag\left(\frac{2\alpha^2 }{p_i g_i} +\frac{\sigma \alpha}{g_i} -\frac{2\alpha}{(\mu+L)g_i}\right)} .
\end{eqnarray*}
If we now choose $\alpha>0$ such that
\[\frac{2\alpha }{p_i } +\sigma  -\frac{2}{\mu+L} \leq 0, \qquad \left(\frac{2\alpha}{\sigma}+1 \right)(1-p_i) \leq 1-\alpha \mu \frac{2L}{\mu+L},\]
then 
we get the recursion
\[\ED{\Lgen^{k+1}} \leq  \left(1-\alpha \mu \tfrac{2L}{\mu+L}\right) \Lgen^k \le (1 - \alpha\mu)\Lgen^k. \]
\qed
\end{proof}

\section{Extra Experiments} \label{sec:extra_exp}

\subsection{Evolution of Iterates: Extra Plots\label{sec:evolution_extra}}

Here we show some additional plots similar to Figure~\ref{fig:trajectory}, which we believe help to build intuition about how the iterates of  \texttt{SEGA} behave. We also include plots for \texttt{biasSEGA}, which uses biased estimators of the gradient instead. We found that the iterates of \texttt{biasSEGA} often behave in a more stable way, as could be expected given the fact that they enjoy lower variance. However, we do not have any theory supporting the convergence of \texttt{biasSEGA}; this is left for future research.

\begin{figure}[!h]
\centering
\begin{minipage}{0.35\textwidth}
\centering
\includegraphics[width = \textwidth ]{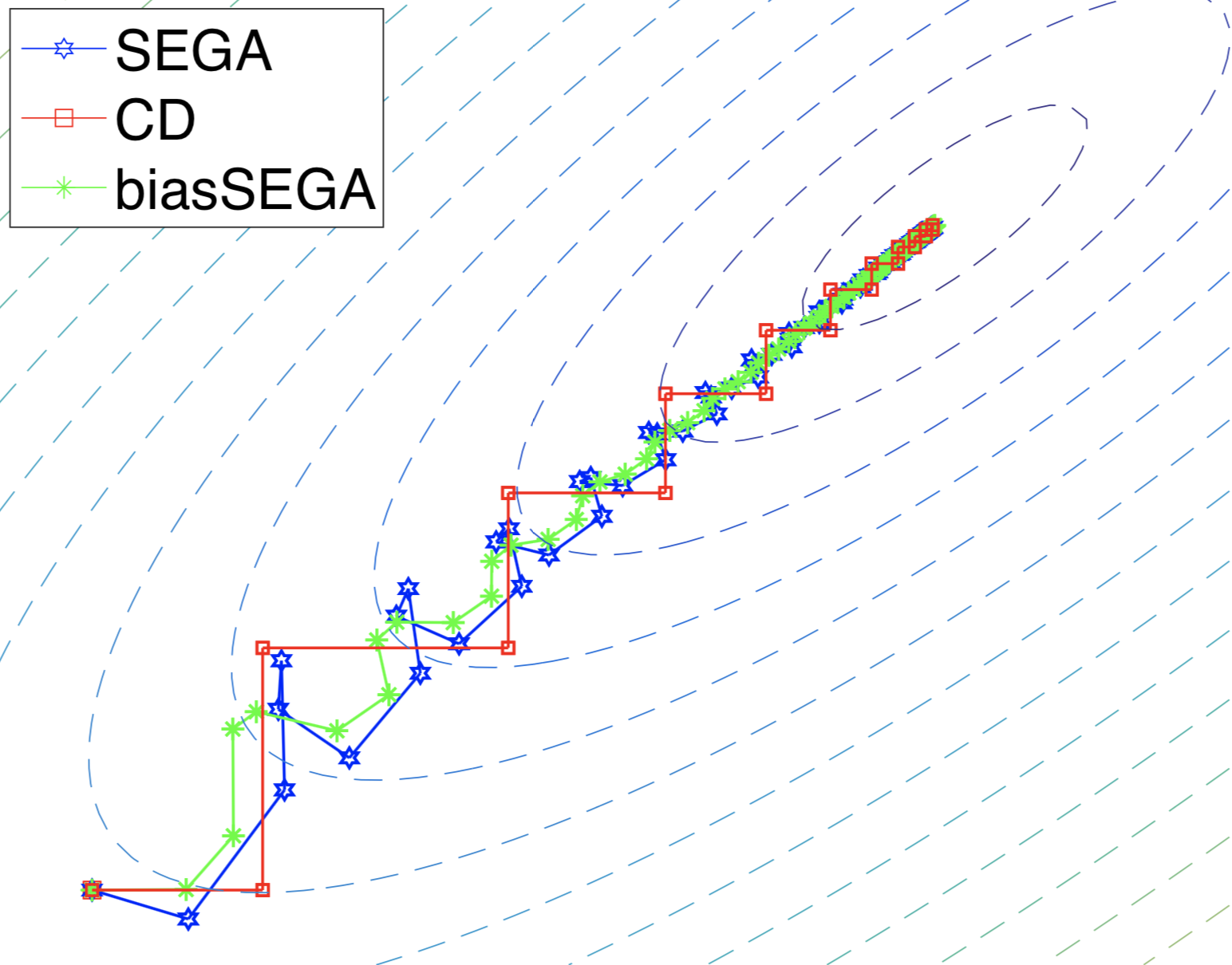}
\caption{Evolution of iterates of \texttt{SEGA}, \texttt{CD} and \texttt{biasSEGA} (updates made via $h^{k+1}$ instead of $g^k$).}
\end{minipage}
\hskip 1cm
\begin{minipage}{0.35\textwidth}
\centering
\includegraphics[width = \textwidth ]{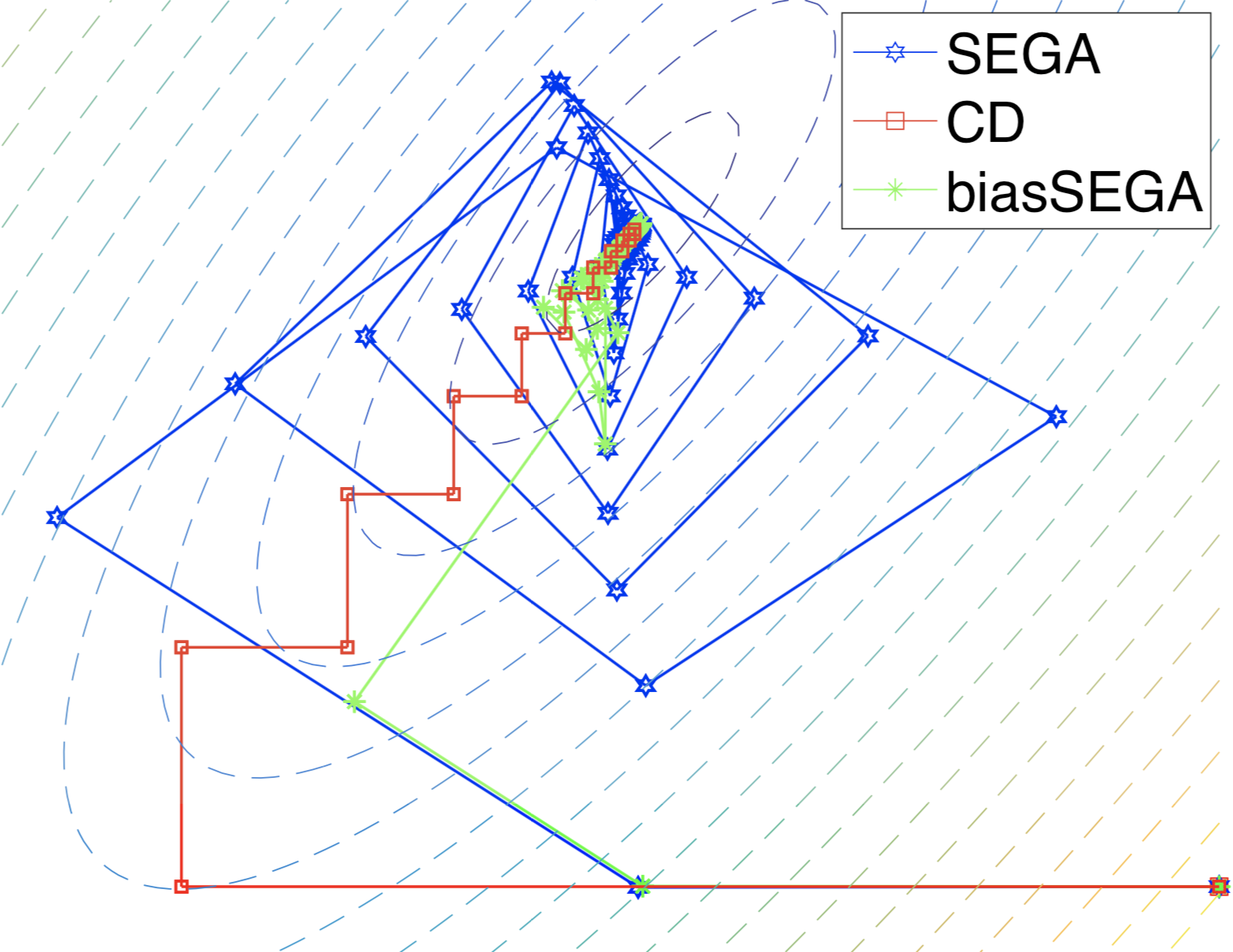}
\caption{Iterates of \texttt{SEGA}, \texttt{CD} and \texttt{biasSEGA} (updates made via $h^{k+1}$ instead of $g^k$). Different starting point.}
\end{minipage}
\\
\hskip 1cm
\begin{minipage}{0.35\textwidth}
\centering
\includegraphics[width = \textwidth ]{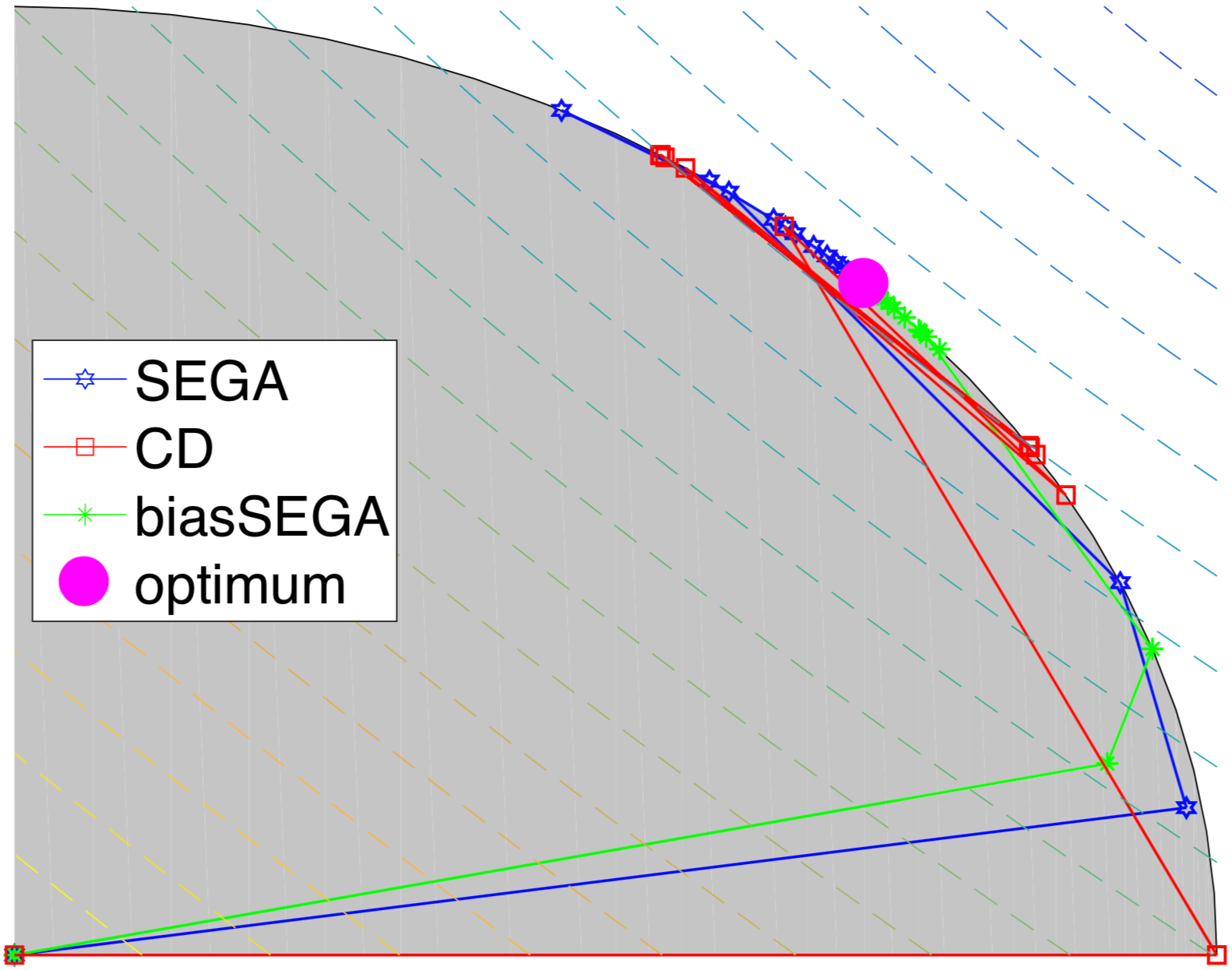}
\caption{Iterates of projected \texttt{SEGA}, projected \texttt{CD} (which do not converge) and projected \texttt{biasSEGA} (updates made via $h^{k+1}$ instead of $g^k$). The constraint set is represented by the shaded region.}
\end{minipage}
\end{figure}

\clearpage

\subsection{Experiments from Section~\ref{sec:experiments} with empirically optimal stepsize \label{sec:exp_optimal}}
In the experiments in Section~\ref{sec:experiments}, we worked with quadratic functions of the form \[f(x)\eqdef \frac12 x^\top \mM x-b^\top x,\] where $b$ is a random vector with independent entries from $\cN(0,1)$ and $\mM\eqdef\mU\Sigma \mU^\top$ according to Table~\ref{tab:problem} for $\mU$ obtained from QR decomposition of random matrix with independent entries from $\cN(0,1)$. For each problem, the starting point was chosen to be a vector with independent entries from $\cN(0,1)$. 

\begin{table}[!h]
\centering
\small
\begin{tabular}{|c|c|}
\hline
Type  &   $\Sigma$  \\
\hline
\hline
1 &  Diagonal matrix with first  $n/2$ components equal to 1 and the rest equal to $n$\\ \hline
2&  Diagonal matrix with first  $n-1$ components equal to 1 and the remaining one equal to $n$\\ \hline
3&  Diagonal matrix with $i$--th component equal to $i$\\ \hline
4&   Diagonal matrix with components coming from uniform distribution over $[0,1]$\\ \hline
\end{tabular}
\caption{Spectrum of $\mM$.}
\label{tab:problem}
\end{table}

The results are provided in Figures~\ref{fig:pgd_comp2}-\ref{fig:aggressive2}. They include zeroth-order experiments and the subspace version of \texttt{SEGA}.

\begin{figure}[!h]
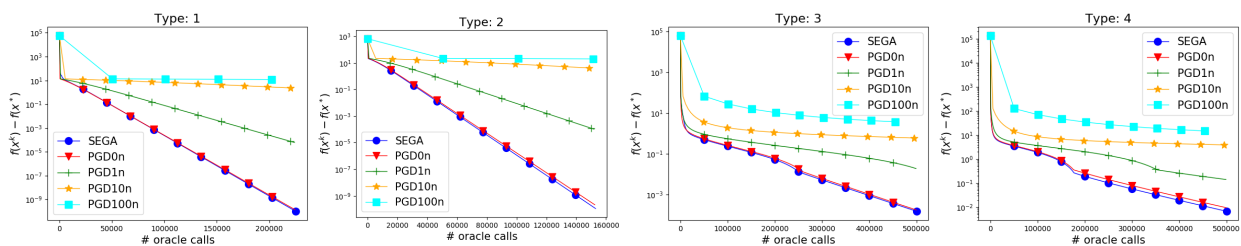

\centering
\begin{minipage}{0.25\textwidth}
  \centering
\includegraphics[width =  \textwidth ]{SEGAth_oracle_1.png}
        %\caption{ Residual vs. iteration  }\label{fig:bl_ex_flops}
\end{minipage}%
\begin{minipage}{0.25\textwidth}
  \centering
\includegraphics[width =  \textwidth ]{SEGAth_oracle_2.png}
        %\caption{ Residual vs. iteration  }\label{fig:bl_ex_flops}
\end{minipage}%
\begin{minipage}{0.25\textwidth}
  \centering
\includegraphics[width =  \textwidth ]{SEGAth_oracle_3.png}
        %\caption{ Residual vs. iteration  }\label{fig:bl_ex_flops}
\end{minipage}%
\begin{minipage}{0.25\textwidth}
  \centering
\includegraphics[width =  \textwidth ]{SEGAth_oracle_4.png}
        %\caption{ Residual vs. iteration  }\label{fig:bl_ex_flops}
\end{minipage}%
\caption{\footnotesize Counterpart to Figure~\ref{fig:pgd_comp} -- convergence illustration of \texttt{SEGA} and \texttt{PGD}. The indicator ``Xn'' in the label stands for the setting when the cost of solving linear system is $Xn$ times higher comparing to the oracle call. Recall that a linear system is solved after each $n$ oracle calls. Empirically best stepsizes were used both \texttt{PGD} and \texttt{SEGA}. }\label{fig:pgd_comp2}
\end{figure}

\begin{figure}[!h]
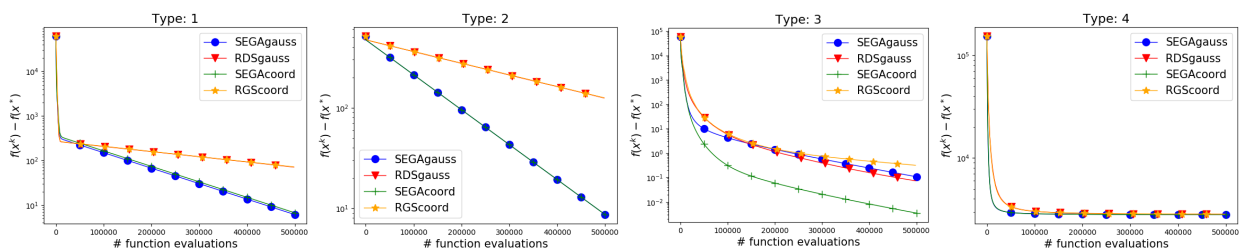

\begin{minipage}{0.25\textwidth}
  \centering
\includegraphics[width =  \textwidth ]{SEGAth_zero_1.png}
        %\caption{ Residual vs. iteration  }\label{fig:bl_ex_flops}
\end{minipage}%
\begin{minipage}{0.25\textwidth}
  \centering
\includegraphics[width =  \textwidth ]{SEGAth_zero_2.png}
        %\caption{ Residual vs. iteration  }\label{fig:bl_ex_flops}
\end{minipage}%
\begin{minipage}{0.25\textwidth}
  \centering
\includegraphics[width =  \textwidth ]{SEGAth_zero_3.png}
        %\caption{ Residual vs. iteration  }\label{fig:bl_ex_flops}
\end{minipage}%
\begin{minipage}{0.25\textwidth}
  \centering
\includegraphics[width =  \textwidth ]{SEGAth_zero_4.png}
        %\caption{ Residual vs. iteration  }\label{fig:bl_ex_flops}
\end{minipage}%
\caption{\footnotesize Counterpart to Figure~\ref{fig:DFO} -- comparison of \texttt{SEGA} and randomized direct search for a various problems. Empirically best stepsizes were used for both methods.}\label{fig:DFO2}
\end{figure}

\begin{figure}[!h]
\centering
\begin{minipage}{0.25\textwidth}
  \centering
\includegraphics[width =  \textwidth ]{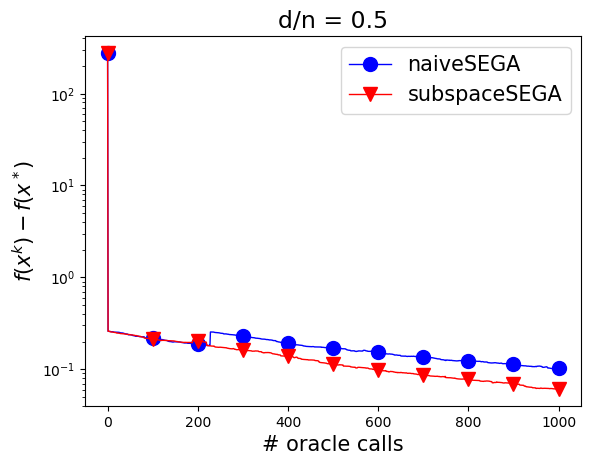}
        %\caption{ Residual vs. iteration  }\label{fig:bl_ex_flops}
\end{minipage}%
\begin{minipage}{0.25\textwidth}
  \centering
\includegraphics[width =  \textwidth ]{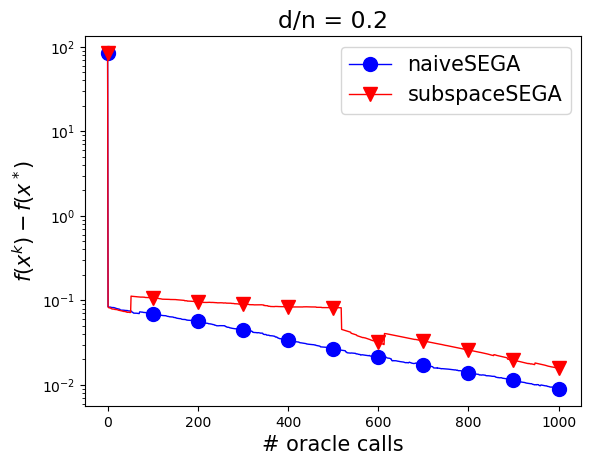}
        %\caption{ Residual vs. iteration  }\label{fig:bl_ex_flops}
\end{minipage}%
\begin{minipage}{0.25\textwidth}
  \centering
\includegraphics[width =  \textwidth ]{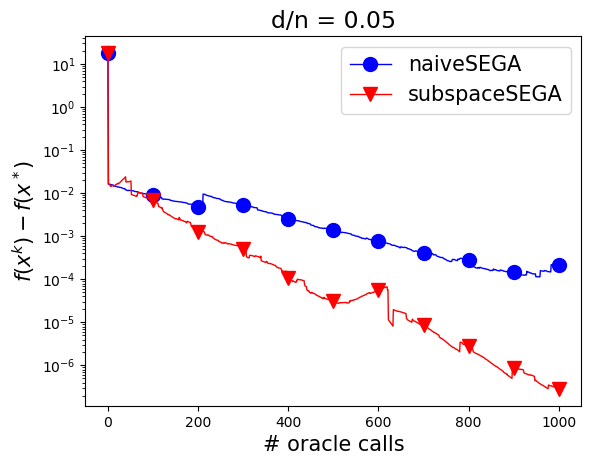}
        %\caption{ Residual vs. iteration  }\label{fig:bl_ex_flops}
\end{minipage}%
\begin{minipage}{0.25\textwidth}
  \centering
\includegraphics[width =  \textwidth ]{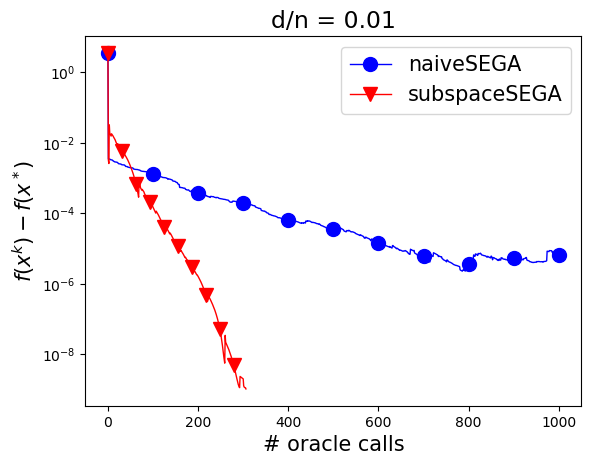}
        %\caption{ Residual vs. iteration  }\label{fig:bl_ex_flops}
\end{minipage}%
\caption{\footnotesize Counterpart to Figure~\ref{fig:aggressive} -- comparison of \texttt{SEGA} with sketches from a correct subspace versus naive \texttt{SEGA}. Optimal (empirically) stepsize chosen.}\label{fig:aggressive2}
\end{figure}

\subsection{Experiment: comparison with randomized coordinate descent}
In this section we numerically compare the results from Section~\ref{sec:CD} to analogous results for coordinate descent (as indicated in Table~\ref{tab:CDcmp}).  We consider the ridge regression problem on LibSVM~\cite{chang2011libsvm} data, for both primal and dual formulation. For all methods, we have chosen parameters as suggested from theory Figure~\ref{fig:cd_cmp} shows the results. We can see that in all cases, \texttt{SEGA} is slower to the corresponding coordinate descent method, but still is competitive. We however observe only constant times difference in terms of the speed, as suggested by Table~\ref{tab:CDcmp}.

\begin{figure}[!h]
\centering
\begin{minipage}{0.25\textwidth}
  \centering
\includegraphics[width =  \textwidth ]{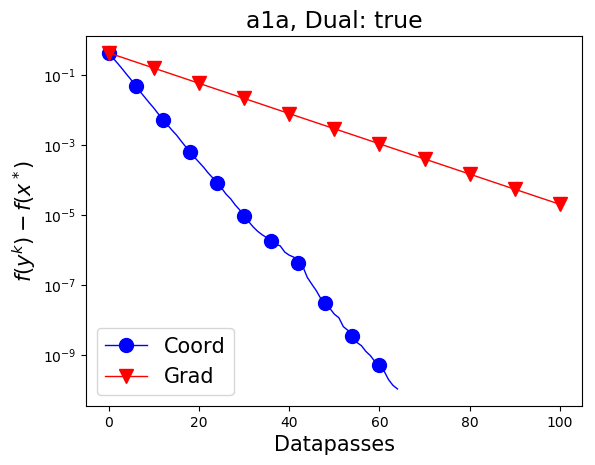}
        %\caption{ Residual vs. iteration  }\label{fig:bl_ex_flops}
\end{minipage}%
\begin{minipage}{0.25\textwidth}
  \centering
\includegraphics[width =  \textwidth ]{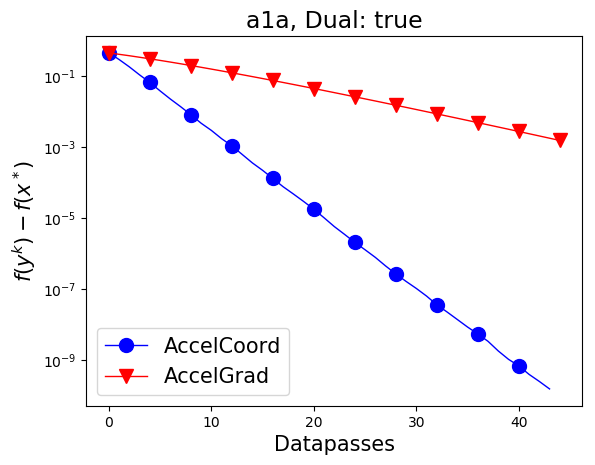}
        %\caption{ Residual vs. iteration  }\label{fig:bl_ex_flops}
\end{minipage}%
\begin{minipage}{0.25\textwidth}
  \centering
\includegraphics[width =  \textwidth ]{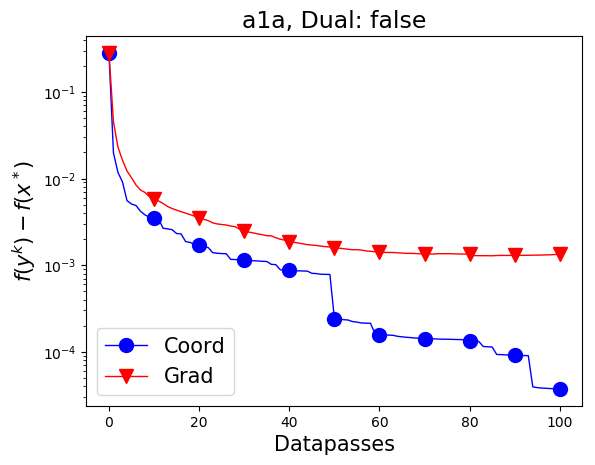}
        %\caption{ Residual vs. iteration  }\label{fig:bl_ex_flops}
\end{minipage}%
\begin{minipage}{0.25\textwidth}
  \centering
\includegraphics[width =  \textwidth ]{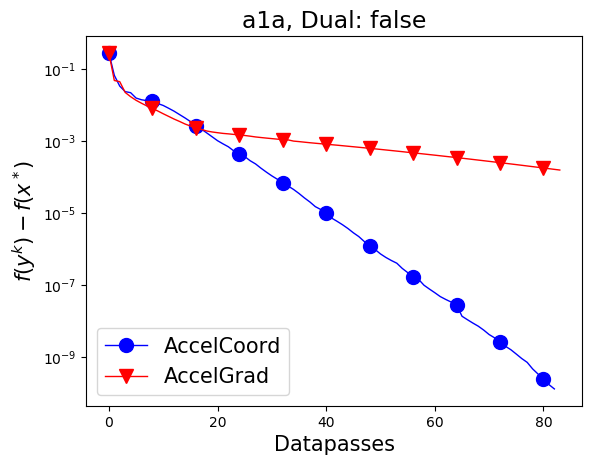}
        %\caption{ Residual vs. iteration  }\label{fig:bl_ex_flops}
\end{minipage}%
\\
\begin{minipage}{0.25\textwidth}
  \centering
\includegraphics[width =  \textwidth ]{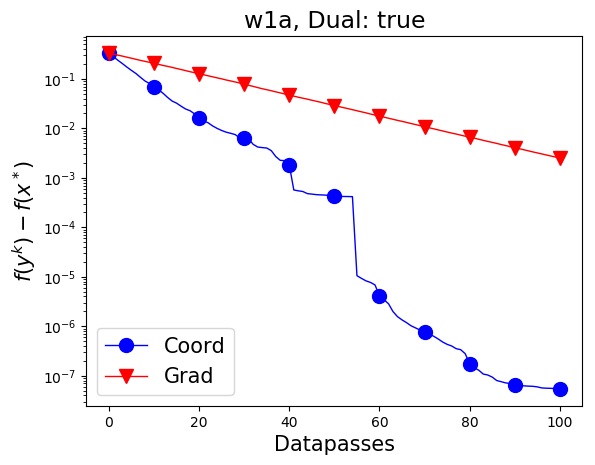}
        %\caption{ Residual vs. iteration  }\label{fig:bl_ex_flops}
\end{minipage}%
\begin{minipage}{0.25\textwidth}
  \centering
\includegraphics[width =  \textwidth ]{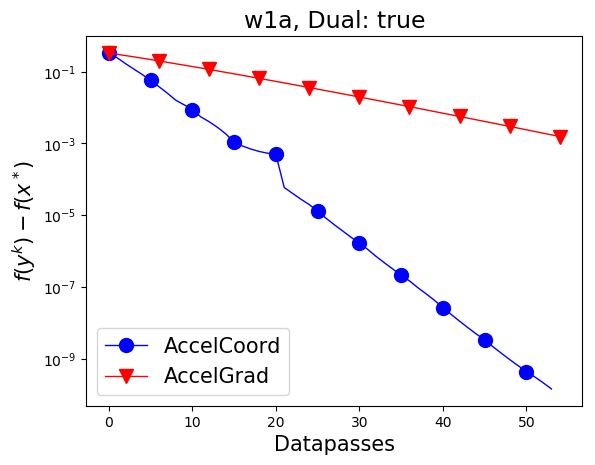}
        %\caption{ Residual vs. iteration  }\label{fig:bl_ex_flops}
\end{minipage}%
\begin{minipage}{0.25\textwidth}
  \centering
\includegraphics[width =  \textwidth ]{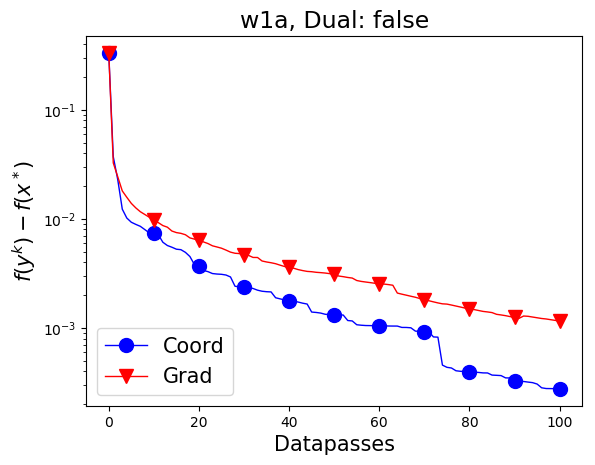}
        %\caption{ Residual vs. iteration  }\label{fig:bl_ex_flops}
\end{minipage}%
\begin{minipage}{0.25\textwidth}
  \centering
\includegraphics[width =  \textwidth ]{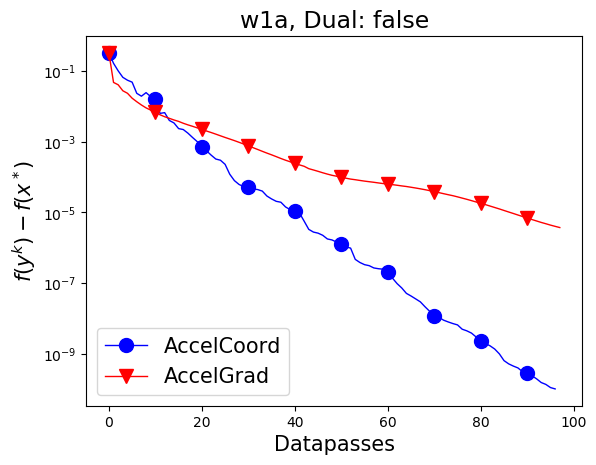}
        %\caption{ Residual vs. iteration  }\label{fig:bl_ex_flops}
\end{minipage}%
\\
\begin{minipage}{0.25\textwidth}
  \centering
\includegraphics[width =  \textwidth ]{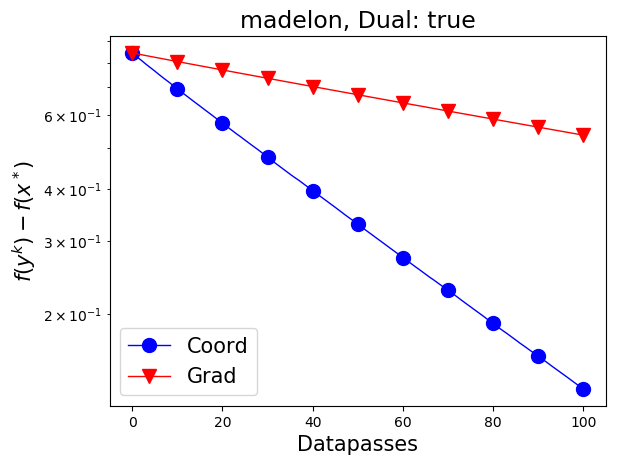}
        %\caption{ Residual vs. iteration  }\label{fig:bl_ex_flops}
\end{minipage}%
\begin{minipage}{0.25\textwidth}
  \centering
\includegraphics[width =  \textwidth ]{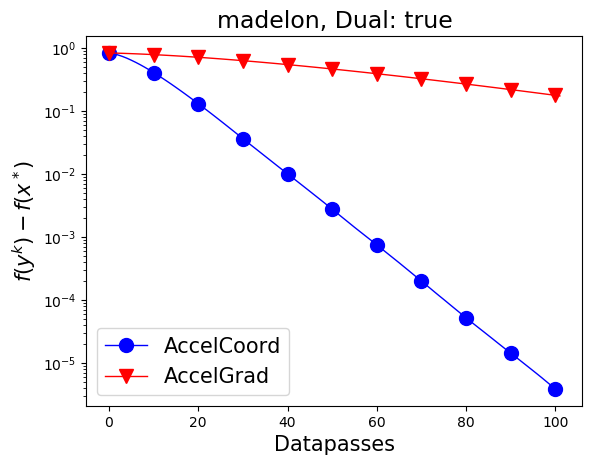}
        %\caption{ Residual vs. iteration  }\label{fig:bl_ex_flops}
\end{minipage}%
\begin{minipage}{0.25\textwidth}
  \centering
\includegraphics[width =  \textwidth ]{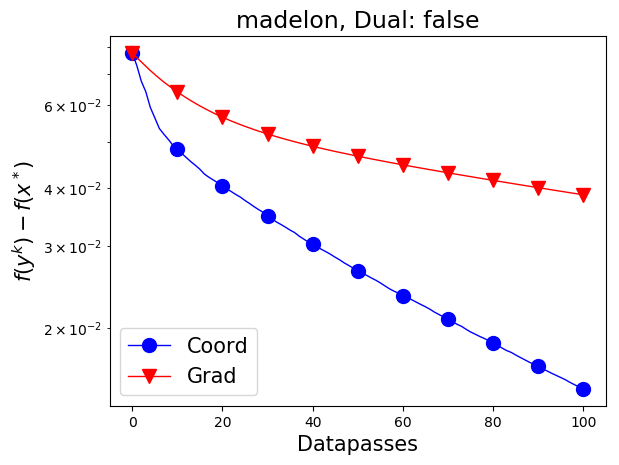}
        %\caption{ Residual vs. iteration  }\label{fig:bl_ex_flops}
\end{minipage}%
\begin{minipage}{0.25\textwidth}
  \centering
\includegraphics[width =  \textwidth ]{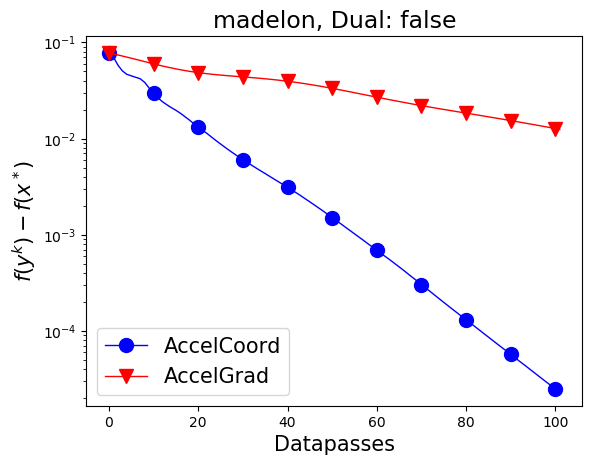}
        %\caption{ Residual vs. iteration  }\label{fig:bl_ex_flops}
\end{minipage}%
\\
\begin{minipage}{0.25\textwidth}
  \centering
\includegraphics[width =  \textwidth ]{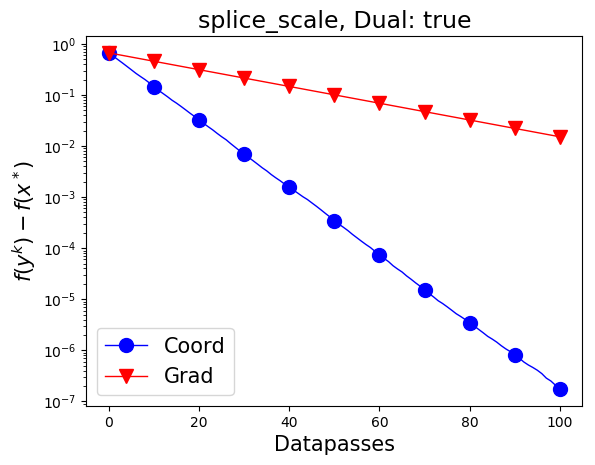}
        %\caption{ Residual vs. iteration  }\label{fig:bl_ex_flops}
\end{minipage}%
\begin{minipage}{0.25\textwidth}
  \centering
\includegraphics[width =  \textwidth ]{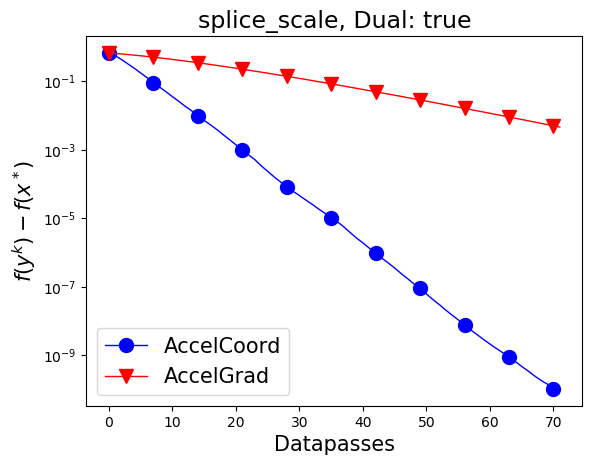}
        %\caption{ Residual vs. iteration  }\label{fig:bl_ex_flops}
\end{minipage}%
\begin{minipage}{0.25\textwidth}
  \centering
\includegraphics[width =  \textwidth ]{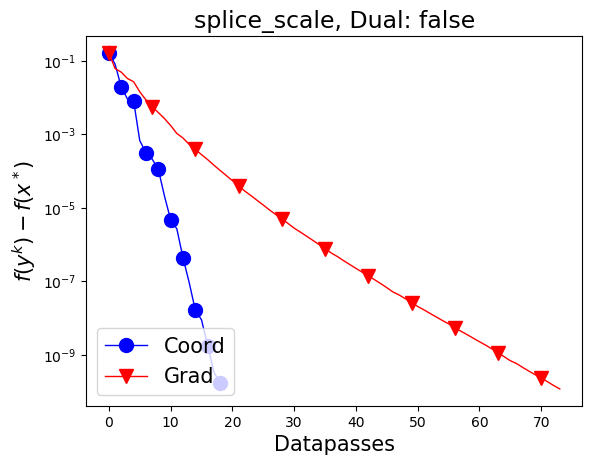}
        %\caption{ Residual vs. iteration  }\label{fig:bl_ex_flops}
\end{minipage}%
\begin{minipage}{0.25\textwidth}
  \centering
\includegraphics[width =  \textwidth ]{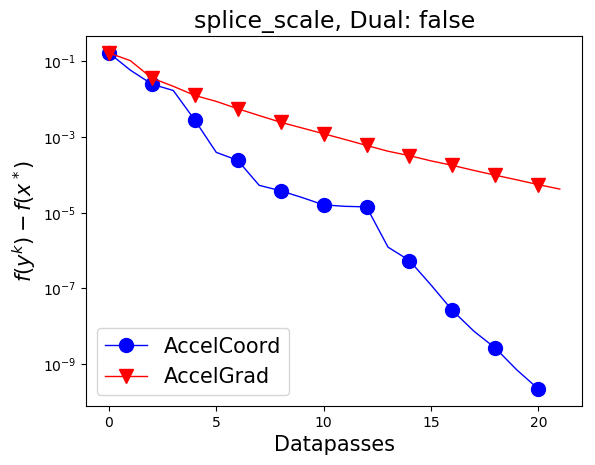}
        %\caption{ Residual vs. iteration  }\label{fig:bl_ex_flops}
\end{minipage}%
\\
\begin{minipage}{0.25\textwidth}
  \centering
\includegraphics[width =  \textwidth ]{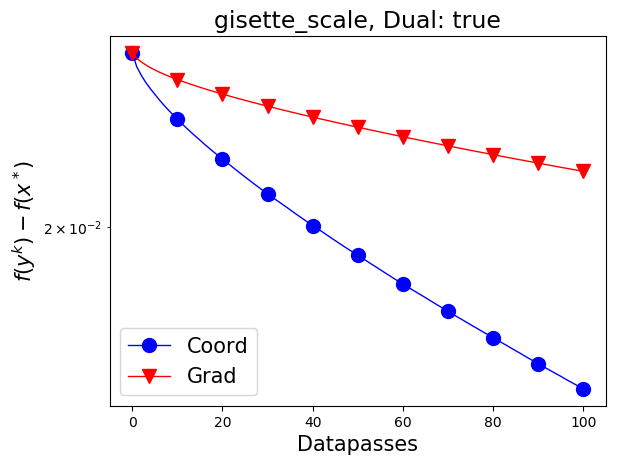}
        %\caption{ Residual vs. iteration  }\label{fig:bl_ex_flops}
\end{minipage}%
\begin{minipage}{0.25\textwidth}
  \centering
\includegraphics[width =  \textwidth ]{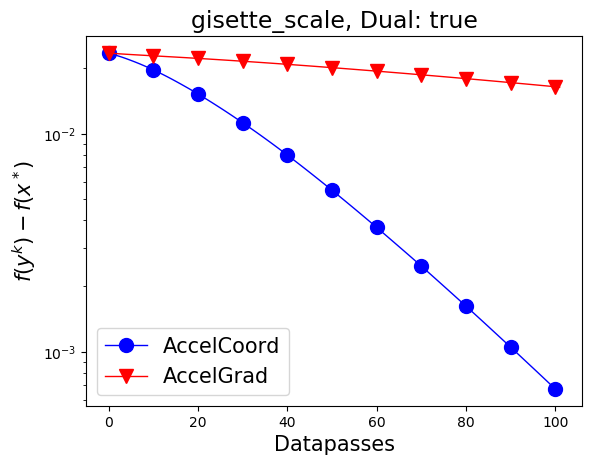}
        %\caption{ Residual vs. iteration  }\label{fig:bl_ex_flops}
\end{minipage}%
\begin{minipage}{0.25\textwidth}
  \centering
\includegraphics[width =  \textwidth ]{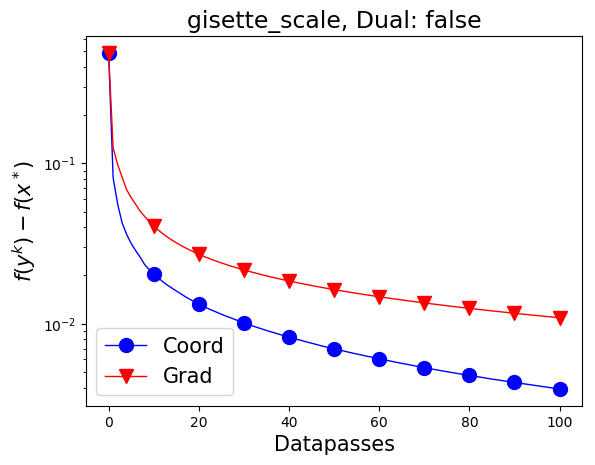}
        %\caption{ Residual vs. iteration  }\label{fig:bl_ex_flops}
\end{minipage}%
\begin{minipage}{0.25\textwidth}
  \centering
\includegraphics[width =  \textwidth ]{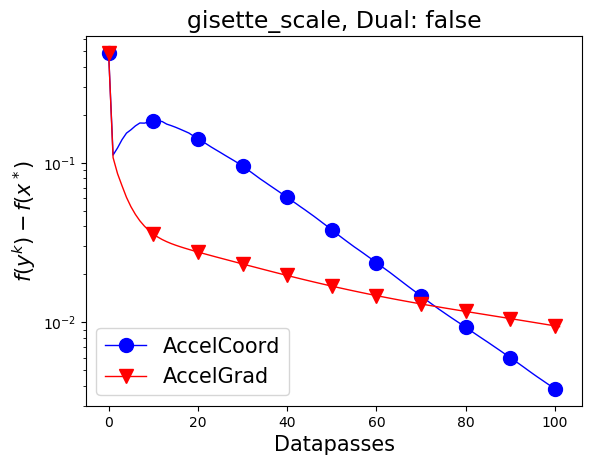}
        %\caption{ Residual vs. iteration  }\label{fig:bl_ex_flops}
\end{minipage}%
\caption{Comparison of \texttt{SEGA} and \texttt{ASEGA} with corresponding coordinate descent methods for $R=0$.}\label{fig:cd_cmp}
\end{figure}
\clearpage
\subsection{Experiment: large-scale logistic regression}
In this experiment, we set $\mB$ to be identity matrix and compare \texttt{CD} to \texttt{SEGA}  with coordinate sketches, both with uniform sampling and with similar stepsizes. The problem considered is logistic regression with $\ell_2$ penalty:
\[
	\min_{x\in\R^n} \frac{1}{m}\sum_{i=1}^m \log\left(1 + \exp(-b_i a_i^\top x) \right) + \frac{\mu}{2}\|x\|_2^2,
\]
where $a_i$ and $b_i$ are data-dependent.
 Clearly, this regularizer is separable, so we can easily apply both methods. The value of $\mu$ was chosen to be of order $\frac{1}{m}$ in both experiments. 
Here we use real-world large scale datasets from the LIBSVM \cite{chang2011libsvm} library, a summary can be found in Table~\ref{tab:logreg}.
To make it clear whether \texttt{CD} and \texttt{SEGA}  converge with the same speed if given similar stepsizes, we use stepsize $\frac{1}{L}$ for \texttt{CD} and $\frac{1}{dL}$ for \texttt{SEGA}. The results can be found in Figure~\ref{fig:logreg}.
\begin{table}[h]
\begin{center}
\begin{tabular}{ c c c c c c} 
 \hline
 Dataset & $m$ & $n$ & $L$ & $\mu$ \\ 
 \hline
 Epsilon & 400000  & 2000 & 0.25 & $2.5\cdot 10^{-5}$\\
 Covtype &  581012  & 54 & 21930585.	25 & $10^{-1}$\\
 \hline
\end{tabular}
\end{center}
\caption{Description of the datasets used in our logistic regression experiments. Constants $m$, $n$, $L$ and $\mu$ denote respectively the size of the training set, the number of features, the Lipschitz constant, and the value of $\ell_2$ penalty.}\label{tab:logreg}
\end{table}
\begin{figure}
\begin{center}
\includegraphics[width =  \textwidth/3 ]{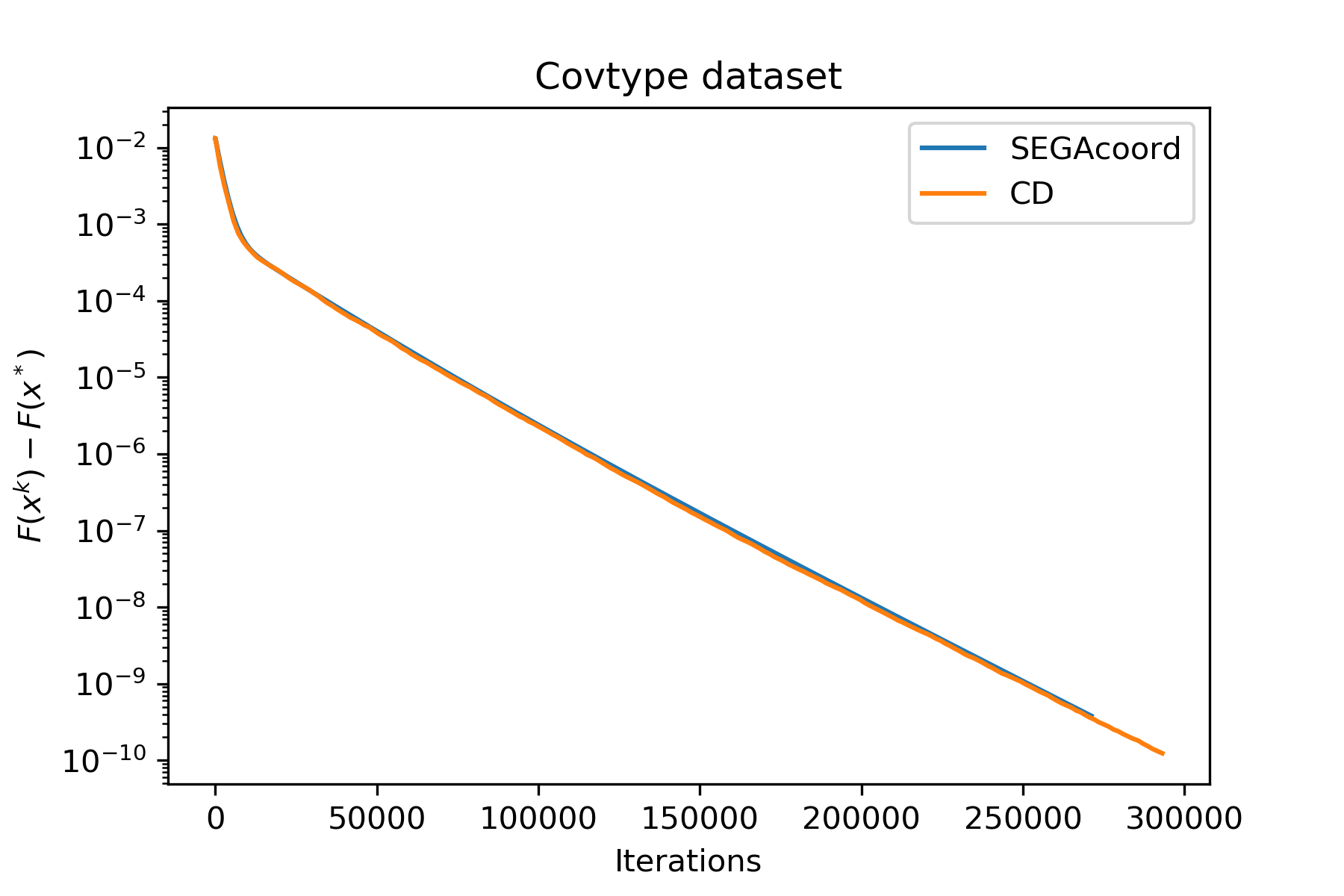}
\includegraphics[width =  \textwidth/3 ]{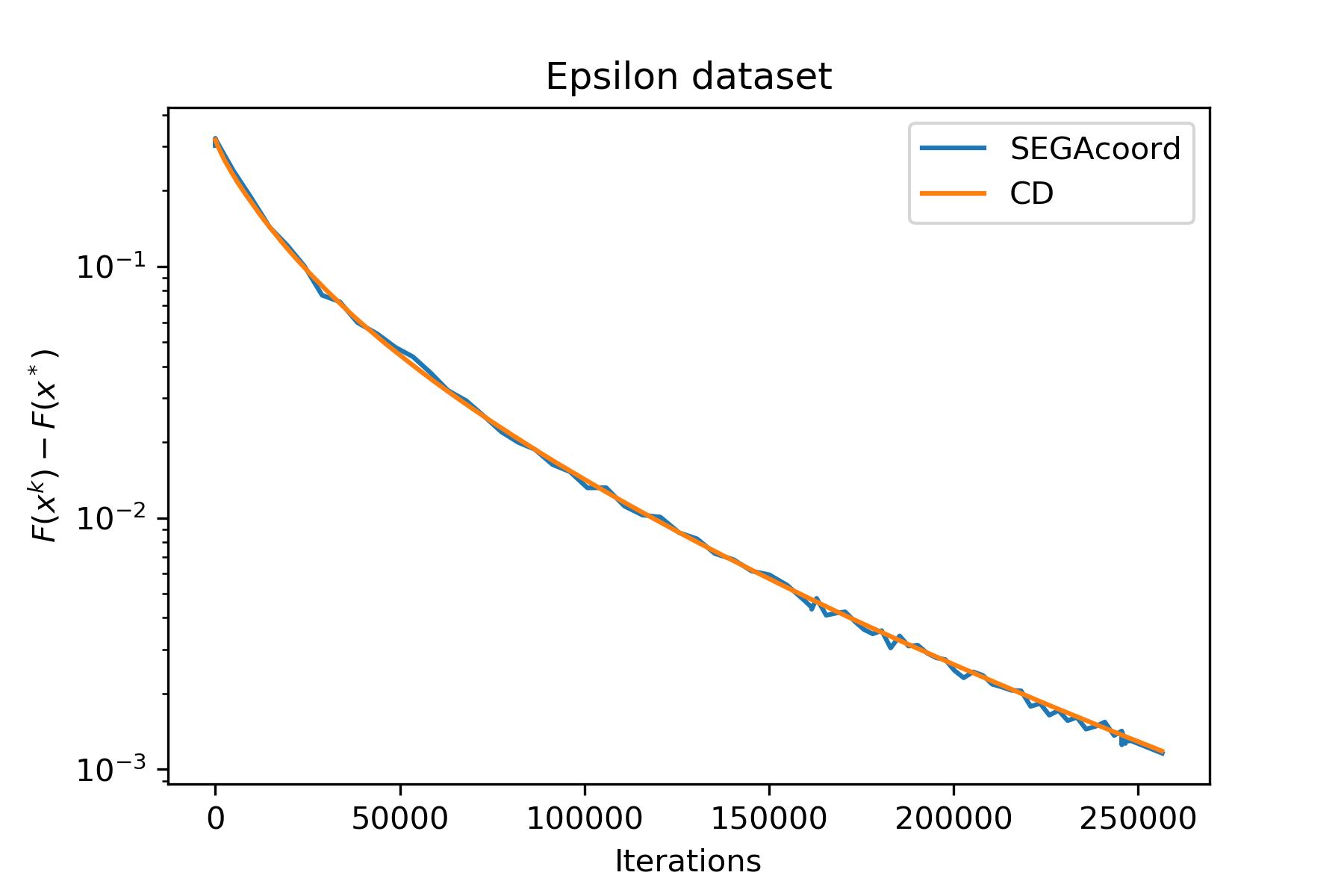}
\end{center}
\caption{Comparison of \texttt{SEGA}  with \texttt{CD} on logistic regression problem with similar stepsizes.} \label{fig:logreg}
\end{figure}

\newpage
\section{Frequently Used Notation \label{sec:notation}}

\begin{table}[!h]
\begin{center}

{
\footnotesize
\begin{tabular}{|c|l|c|}
\hline
\multicolumn{3}{|c|}{{\bf Basic} }\\
\hline
%$\R^n$ & Set of $n$ dimensional real vectors &\\
$\E{\cdot}$,  $\Prob{\cdot}$  & Expectation / Probability &\\
%$\langle \cdot ,\cdot \rangle$ & Euclidean inner product  &\\
$\langle \cdot ,\cdot \rangle_\mB$,  $\| \cdot \|_\mB$ & Weighted inner product and  norm: $\langle x ,y \rangle_\mB =x^\top \mB y$; $\| x \|_\mB =\sqrt{\langle x ,x \rangle_\mB} $ &\\
%$\nabla $ & Gradient operator &\\
$e_i$ & $i$-th vector from the standard basis & \\
$\mI$ & Identity matrix & \\
$\lambda_{\max} (\cdot), \lambda_{\min}(\cdot)$ & Maximal eigenvalue / minimal eigenvalue  & \\
 $f$ & Objective to be minimized over set $\R^{n}$ & \eqref{eq:main}\\
 $R$ & Regularizer & \eqref{eq:main}\\
$x^*$ & Global optimum  &\\
$L$ & Lipschitz constant for $\nabla f$ &\\
$\mQ$ & Smoothness matrix &\eqref{eq:M_smooth_inv} \\
$\mM$ & Smoothness matrix, equal to $\mQ^{-1}$ for $\mB=\mI$ &\eqref{eq:M_smooth} \\
$\mu$ & Strong convexity constant & \\

\hline
 \multicolumn{3}{|c|}{{\bf \texttt{SEGA}}}\\
 \hline
  $\cD$ & Distribution over sketch matrices $\mS$ &\\
 $\mS$ & Sketch matrix & \eqref{eq:sketch-n-project}\\
 $\ED{\cdot}$ & Expectation over the choice of $\mS$&\\
  $b$ & Random variable such that $\mS\in \R^{n\times b}$ & \\
  $\zeta(\mS, x)$ & Sketched gradient at $x$ & \eqref{eq:sketched_grad}\\
% $\mB$ & Positive definite matrix defining a norm & \eqref{eq:sketch-n-project}\\
  $\mZ$ &  $\mS \left(\mS^\top \mB^{-1} \mS\right)^\dagger\mS^\top$ & \\
  $\theta$ & Random variable for which $\ED{\theta  \mZ} = \mB$ & \eqref{eq:unbiased}\\
  $\mC$ & $ \ED{ \theta^2 \mZ }$ & Thm~\ref{thm:main}\\
 $h, g$ & Biased and unbiased gradient estimators & \eqref{eq:h^{k+1}}, \eqref{eq:g^k}\\
  $\alpha$ & Stepsize & \\ 
    $\Lgen$ & Lyapunov function & Thm~\ref{thm:main}, \\ 
     $\sigma$ & Parameter for Lyapunov function & Thm~\ref{thm:main}, \ref{t:imp_nacc}\\ 
 \hline

 \multicolumn{3}{|c|}{{\bf Extra Notation for Section~\ref{sec:CD} }}\\
 \hline
  $p$, $\Probmat$ & Probability vector and matrix &\\
    $v$ & vector of ESO parameters &\eqref{eq:ESO}\\
  $\mPdiag,\mVdiag$ & $\diag(p),\diag(v)$ & \\
  $\gamma$& $\alpha - \alpha^2\max_{i}\{\tfrac{v_{i}}{p_{i}}\}-\sigma$& Thm~\ref{t:imp_nacc} \\
$y,z$ & Extra sequences of iterates for \texttt{ASEGA} & \\
$\tau,\beta$ & Parameters for \texttt{ASEGA} & \\
  $\Lnacc, \Lacc$ & Lyapunov functions & Thm~\ref{t:imp_nacc}, \ref{t:imp_acc}\\ 
   $ \TD(v,p)$ & $\max_i \frac{\sqrt{v_i}}{p_i}$ & \\
 \hline

\hline
\end{tabular}
}

\end{center}
\caption{Summary of frequently used notation.}
\label{tbl:notation}
\end{table}

\end{document}